\documentclass[10pt]{amsart}
\usepackage{appendix}
\usepackage[colorlinks]{hyperref}
\usepackage{subfigure}
\usepackage{mathtools}
\usepackage{srcltx}

\newcommand{\R}        {\mathbb {R}}

\newcommand{\grad}     {\nabla}

\newcommand{\del}      {\partial}

\renewcommand{\vec}[1]{\boldsymbol{#1}}
\DeclareMathOperator{\Div}{div}

\newtheorem{definition}{Definition}[section]
\newtheorem{theorem}{Theorem}[section]
\newtheorem{proposition}{Proposition}[section]
\newtheorem*{thma}{Theorem A}
\newtheorem{lemma}{Lemma}[section]
\newtheorem{corollary}{Corollary}[section]
\newtheorem{remark}{Remark}[section]
\newcommand{\rlemma}[1]{Lemma~\ref{#1}}
\newcommand{\rth}[1]{Theorem~\ref{#1}}

\newcommand{\rrem}[1]{Remark~\ref{#1}}
\newcommand{\rprop}[1]{Proposition~\ref{#1}}

\theoremstyle{definition} 

\newtheorem*{maintheorem*}{Main Theorem}

\allowdisplaybreaks

\numberwithin{equation}{section}
\numberwithin{figure}{section}
\numberwithin{table}{section}

\newcounter{asnr}

{\ifnum\value{asnr}=0 \stepcounter{asnr} 
  \begin{enumerate}[label=\textbf{A}.\arabic{enumi}]
    \else
    \begin{enumerate}[label=\textbf{A}.\arabic{enumi},resume] \fi}
{\end{enumerate}}

\title[]{ Asymptotics of eigenstates of elliptic problems  with mixed boundary data on domains tending to infinity}

\author[M. Chipot]{M.\ Chipot} \address[Michel Chipot]{\newline Institut f\"ur Mathematik,\ Universit\"at Zurich, \newline
Winterthurerstr. 190,\ CH-8057 Z\"urich,
Switzerland.}
\email[]{m.m.chipot@math.uzh.ch}

\author[P. Roy]{P.\ Roy} \address[Prosenjit Roy]{\newline Institut f\"ur Mathematik,\ Universit\"at Zurich, \newline
Winterthurerstr. 190,\ CH-8057 Z\"urich,
Switzerland.}
\email[]{prosenjit.roy@math.uzh.ch}

\author[I. Shafrir]{I.\ Shafrir} \address[Itai Shafrir ]{\newline
Department of Mathematics, Technion - Israel Institute of Technology
\newline 32000, Haifa, Israel}
\email[]{shafrir@math.technion.ac.il}





\keywords{Eigenvalue problem,\ $\ell$ goes to plus infinity,\ Dimension reduction.}
\date{\today}

\begin{document}
\maketitle



ABSTRACT. We analyze the asymptotic behavior of eigenvalues and
eigenfunctions of an elliptic operator with mixed boundary conditions
on cylindrical domains when the length of the cylinder goes to
infinity. 
We  identify the correct limiting problem and show in particular, that
in general the limiting behavior is very different from the one for
the Dirichlet boundary conditions.  
\vskip 1 cm


\section{Introduction}\label{intro}
Let $\omega$ be a bounded open set in $\R^{n-1}$. For every
$\ell>0$ set $\Omega_\ell = (-\ell, \ell)\times{}\omega$ and
write each $x\in\Omega_\ell$ as $x=(x_1,X_2)$ with $X_2=(x_2,\ldots,x_n)$. We assume that
the matrices 
$$A(X_2)=\begin{pmatrix}a_{11}(X_2)  & A_{12}(X_2)\\
A_{12}^t(X_2) & A_{22}(X_2)
  \end{pmatrix}
$$
are uniformly elliptic and uniformly bounded
on $\omega$ (precise assumptions will be made in Section~\ref{preliminaries}). 
 The limiting behavior, when $\ell$ goes to infinity, of the eigenvalues and
 eigenfunctions of the elliptic operator $-\Div(A(X_2)\nabla u)$ on
 $\Omega_\ell$ with
 zero Dirichlet boundary conditions,  was studied by Chipot and
 Rougirel in \cite{dir}. We shall recall below one of their main
 results that was the principal motivation for the current paper.
Let $\mu{}^k$ and 
$\sigma_\ell^k$ denote, respectively,  the $k$th eigenvalues for the problems  
\begin{equation}
\label{eq:3}
\left\{
\begin{aligned}
  -&\Div(A_{22}(X_2)\nabla u)=\mu{}u \quad \text{ in }\omega,\\
     &u=0 \quad \text{ on }\partial\omega,
\end{aligned}
\right.
\end{equation}
and 
\begin{equation}
\label{eq:4}
\left\{
\begin{aligned}
  -&\Div(A(X_2)\nabla u)=\sigma u \quad\text{ in }\Omega_\ell,\\
     &u=0 \quad\text{ on }\partial\Omega_\ell.
\end{aligned}
\right.
\end{equation}
 The following relation between problem \eqref{eq:4} (for large $\ell$) and
 problem \eqref{eq:3} was established in  \cite{dir}.
\begin{thma}[Chipot-Rougirel]\label{dirichh}
\begin{equation}\label{upplow}
\mu^1 \leq \sigma_\ell^1 \leq \mu^{1} + \frac{C}{\ell^2}\,,
\end{equation}
where $C$ is a constant independent of $\ell$.
\end{thma}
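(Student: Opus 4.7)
The plan is to use the Rayleigh quotient characterizations
\[
\sigma_\ell^1 = \min_{0\ne u\in H^1_0(\Omega_\ell)}\frac{\int_{\Omega_\ell}A\nabla u\cdot\nabla u\,dx}{\int_{\Omega_\ell}u^2\,dx},\qquad \mu^1 = \min_{0\ne v\in H^1_0(\omega)}\frac{\int_\omega A_{22}\nabla v\cdot\nabla v\,dX_2}{\int_\omega v^2\,dX_2}.
\]
The upper bound will come from an explicit tensor-product test function; the lower bound, from slicing along $x_1$ and applying the cross-sectional Rayleigh inequality.

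For the upper bound I take $u(x_1,X_2)=f(x_1)\phi(X_2)$, where $\phi$ is a first eigenfunction of \eqref{eq:3} normalized by $\int_\omega\phi^2\,dX_2=1$, and $f\in H^1_0(-\ell,\ell)$ is an auxiliary cap. Fubini splits the Dirichlet energy into three pieces: the $A_{22}$-piece $\int_{\Omega_\ell}A_{22}\nabla_{X_2}u\cdot\nabla_{X_2}u\,dx=\mu^1\int_{-\ell}^\ell f^2\,dx_1$ (using that $\phi$ solves \eqref{eq:3}); the $a_{11}$-piece $\int_{\Omega_\ell}a_{11}(\partial_1u)^2\,dx=C_0\int_{-\ell}^\ell(f')^2\,dx_1$, with $C_0:=\int_\omega a_{11}\phi^2\,dX_2<\infty$ by boundedness of $a_{11}$; and the cross term $\int_{\Omega_\ell}2A_{12}\nabla_{X_2}u\cdot\partial_1u\,dx=2\bigl(\int_\omega\phi\,A_{12}\nabla\phi\,dX_2\bigr)\int_{-\ell}^\ell ff'\,dx_1$, which vanishes because $f(\pm\ell)=0$ forces $\int_{-\ell}^\ell ff'\,dx_1=\tfrac12[f^2]_{-\ell}^\ell=0$. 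The Rayleigh quotient of $u$ is therefore $\mu^1+C_0\int(f')^2/\int f^2$; choosing $f(x_1)=\cos(\pi x_1/(2\ell))$ gives $\int(f')^2/\int f^2=\pi^2/(4\ell^2)$ and yields $\sigma_\ell^1\le\mu^1+C/\ell^2$ with $C=C_0\pi^2/4$.

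For the lower bound, given any $u\in H^1_0(\Omega_\ell)$ I slice by $x_1$: for a.e.\ $x_1\in(-\ell,\ell)$ the function $X_2\mapsto u(x_1,X_2)$ lies in $H^1_0(\omega)$, so the Rayleigh characterization of $\mu^1$ yields
\[
\int_\omega A_{22}\nabla_{X_2}u(x_1,\cdot)\cdot\nabla_{X_2}u(x_1,\cdot)\,dX_2\ge\mu^1\int_\omega u(x_1,\cdot)^2\,dX_2.
\]
Integrating in $x_1$ produces $\int_{\Omega_\ell}A_{22}\nabla_{X_2}u\cdot\nabla_{X_2}u\,dx\ge\mu^1\int u^2$, and the conclusion $\sigma_\ell^1\ge\mu^1$ reduces to showing that the residual contribution $\int_{\Omega_\ell}\bigl[a_{11}(\partial_1u)^2+2A_{12}\nabla_{X_2}u\cdot\partial_1u\bigr]\,dx$ is nonnegative.

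The main obstacle is exactly this last nonnegativity: the cross term $\int A_{12}\nabla_{X_2}u\cdot\partial_1u$ need not have a definite sign for a general $u\in H^1_0(\Omega_\ell)$. A natural route is to extend $u$ by zero to $\R\times\omega$ and Fourier-transform in $x_1$: together with the Schur-complement identity $A\nabla u\cdot\nabla u = a_{11}(\partial_1u+A_{12}\nabla_{X_2}u/a_{11})^2+\widetilde A\nabla_{X_2}u\cdot\nabla_{X_2}u$, where $\widetilde A=A_{22}-A_{12}^tA_{12}/a_{11}$ is the Schur complement, this reduces the desired bound to $\inf_\xi\lambda_1(L_\xi)=\mu^1$ for the associated Bloch family $L_\xi$ on $\omega$, a verification that exploits the full block structure of $A$ and the Dirichlet condition at $x_1=\pm\ell$. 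The upper bound, by contrast, is entirely explicit once the cap $f$ has been chosen.
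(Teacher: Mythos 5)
Your upper bound is correct: with $u=f(x_1)W_1(X_2)$ and $f(x_1)=\cos(\pi x_1/(2\ell))$, the cross term vanishes because $\int_{-\ell}^{\ell}ff'\,dx_1=\tfrac12[f^2]_{-\ell}^{\ell}=0$, and the Rayleigh quotient equals $\mu^1+C_0\int(f')^2/\int f^2=\mu^1+C_0\pi^2/(4\ell^2)$ with $C_0=\int_\omega a_{11}W_1^2<\infty$.

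The lower bound, however, is not established. You reduce $\sigma_\ell^1\ge\mu^1$ to the nonnegativity of $\int_{\Omega_\ell}\bigl[a_{11}(\partial_1u)^2+2A_{12}\cdot\nabla_{X_2}u\,\partial_1u\bigr]\,dx$, correctly observe that this has no definite sign, and then gesture at a Fourier/Bloch reduction ending in the claim $\inf_\xi\lambda_1(L_\xi)=\mu^1$, which you describe as ``a verification that exploits the full block structure of $A$ and the Dirichlet condition at $x_1=\pm\ell$.'' That verification is not carried out, and it \emph{is} the lower bound. Moreover, once Plancherel is applied, the support condition at $x_1=\pm\ell$ disappears from the fibers $\hat u(\xi,\cdot)\in H^1_0(\omega,\mathbb{C})$, so it cannot be the ingredient that saves the estimate; one needs an intrinsic spectral property of the Bloch family. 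The tools you actually invoke (Schur complement, Cauchy--Schwarz) only yield the smaller constant $\Lambda^1$ from \eqref{eq:Lambda}, which is exactly the limit governing the \emph{mixed} problem, not the Dirichlet one --- so that line of estimate cannot close.

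There is a short, elementary route that does close the gap, with no Fourier analysis: Picone's identity \eqref{eq:55}, which the authors themselves use in the proof of \rth{th:infinity}. Take $u\in C_c^\infty(\Omega_\ell)$ (dense in $H^1_0(\Omega_\ell)$) and $v=W_1(X_2)$, extended $x_1$-independently to $\Omega_\ell$. Then pointwise
\begin{equation*}
A\nabla u\cdot\nabla u\ \ge\ A\nabla W_1\cdot\nabla\Bigl(\frac{u^2}{W_1}\Bigr).
\end{equation*}
Integrate the right-hand side over $\Omega_\ell$. Its $\partial_1$-part contributes $\int_\omega\frac{A_{12}\cdot\nabla_{X_2}W_1}{W_1}\bigl(\int_{-\ell}^{\ell}\partial_1(u^2)\,dx_1\bigr)\,dX_2=0$, because $u$ vanishes near $x_1=\pm\ell$. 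For each fixed $x_1$, the function $u^2(x_1,\cdot)/W_1\in H^1_0(\omega)$, so the $X_2$-part equals $\int_\omega A_{22}\nabla_{X_2}W_1\cdot\nabla_{X_2}(u^2/W_1)=\mu^1\int_\omega u^2$ by the weak form \eqref{eigen2} of \eqref{eq:3}. Hence $\int_{\Omega_\ell}A\nabla u\cdot\nabla u\ge\mu^1\int_{\Omega_\ell}u^2$, giving $\sigma_\ell^1\ge\mu^1$ with a constant manifestly independent of $\ell$. This is the step your proposal is missing, and it shows the mechanism is the positivity of $W_1$ combined with the full Dirichlet condition on $\Gamma_\ell$ (which annihilates the $x_1$-boundary term), not any Bloch-type spectral fact.
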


 The main goal of the present article is to study the analogous
 problem for {\em mixed} boundary conditions, at least for $k=1$.
 Let us write $\partial\Omega_\ell=\Gamma_\ell\cup \gamma_\ell$ where
 \begin{equation}\label{decom}
\Gamma_\ell = \{-\ell, \ell\} \times{}\omega  \text{ and }   \gamma_\ell = (-\ell, \ell)\times{}\partial\omega,
\end{equation}
 and denote by $\lambda_\ell^k$   the $k$th eigenvalue for the mixed
 Neumann-Dirichlet problem
\begin{equation}
\label{eq:5}
\left\{
\begin{aligned}
  -&\Div(A(X_2)\nabla u)=\sigma u \quad\text{ in }\Omega_\ell,\\
     &u=0 \quad\text{ on }\gamma_\ell,\\
     & (A(X_2)\nabla u).\nu=0 \quad\text{ on }\Gamma_\ell.
\end{aligned}
\right.
\end{equation}
 One of our main results establishes that $\lim_{\ell\to\infty}\lambda_\ell^1$ exists, but in
  general it is strictly smaller than $\mu^1$. This ``gap phenomenon'' is
  explained by the appearance of boundary effects near
  $\Gamma_\ell$. To gain better understanding of these effects we are
  led to consider first the limit $\lim_{\ell\to0}\lambda_\ell^1$. 
Asymptotic behavior of elliptic problems set on
domains shrinking  to zero in some directions are generally known as ``Dimension Reduction" problems and are addressed in \cite{acer,ciar,fon} and in a setting particularly
suitable for us, in \cite{braides} . Our work establishes a somewhat
surprising connection  between the theory of dimension reduction
(i.e., ``$\ell\to0$'') and the
theory for ``$\ell \to\infty$''. 
\smallskip

In order to have a more precise description of the boundary effects and
to characterize the value of the limit
$\lim_{\ell\to\infty}\lambda_\ell^1$, we introduce  eigenvalue
problems on the two semi-infinite cylinders
$\Omega_\infty^+=(0,\infty)\times\partial\omega$ and $\Omega_\infty^-=(-\infty,0)\times\partial\omega$, with mixed boundary
conditions. Let
$\nu_\infty^\pm$ denote the first eigenvalue for the operator 
$-\Div(A(X_2)\nabla u)$ on $\Omega_\infty^\pm$ with zero boundary
condition on  the lateral part of the boundary
$\partial\Omega_\infty^\pm$. One might be tempted to expect that the
equality $\nu_\infty^+=\nu_\infty^-$ always hold because of
``symmetry considerations''. However, as we shall see in
Section~\ref{sec:cylinder}, this equality is false in general. Our main results are summarized in the
next theorem, that combines the results of \rth{th1} and
\rth{th:lim-lam}. We denote by $W_1$ the positive normalized
eigenfunction corresponding to $\mu^1$.
\begin{maintheorem*}
  We have $\lim_{\ell\to\infty}\lambda_\ell^1=\min(\nu_\infty^+,\nu_\infty^-)\,.$
If $A_{12}.\nabla W_1\not\equiv  0\text{ a.e. on } \omega,$ then
$\lim_{\ell\to\infty}\lambda_\ell^1<\mu^1$. Otherwise,
$\lambda_\ell^1=\mu^1$, $\forall\ell$.
\end{maintheorem*}
\smallskip

Many  problems of the type ``$\ell\to\infty$'' were
studied in the past. Besides the eigenvalue problem already
mentioned~\cite{dir}, these  include elliptic and parabolic equations,
variational inequalities and systems, see \cite{b,c,d,e,f,g,z}. In
all these problems it is found that
the limit is characterized by the solution of the
corresponding problem on the section $\omega$. We emphasize that the
limiting behavior in our problem is very different.
\smallskip 

The paper is organized as follows. In Section~\ref{preliminaries} we give the
main definitions and notation needed in the subsequent sections. In
Section \ref{model}  we illustrate the gap phenomenon in a simple
model case
where $\omega=(-1,1)$ and $A$ is a $2\times 2$ matrix with constant
coefficients, namely,
$A = A_\delta =
\begin{pmatrix}
1 & \delta \\
\delta & 1
\end{pmatrix}$. In Section~\ref{ge} we prove the gap phenomenon for
the general case. In Section~\ref{sec:lim} we prove that the limit
$\lim_{\ell\to\infty}\lambda_\ell^1$ exists, and identify its value
using the eigenvalue problems on the semi-infinite cylinders
$\Omega_\infty^+$ and $\Omega_\infty^-$. In
Section~\ref{sec:cylinder} we investigate further the problem on a
semi-infinite cylinder and use it to give a more precise description
of the first eigenfunction $u_\ell$ for large $\ell$. In the last
section, Section~\ref{sec:further},  we address briefly two natural
related problems. First, we present a result on the asymptotics of the
second eigenvalue $\lambda_\ell^2$ as $\ell$ goes to infinity (under some symmetry assumption on the matrix $A$). Second,
we give a partial result for the more general case of a domain
becoming large in several directions.


\section{Preliminaries}\label{preliminaries}
For each $\ell>0$ consider  $\Omega_\ell = (-\ell, \ell)  \times{}\omega$ with $\omega$ a
bounded domain in $\R^{n-1}$ as in the Introduction. The lateral part
of $\partial\Omega_\ell$ and the remaining part of the cylinder (i.e., the two
ends) will be denoted by $\gamma_\ell$ and $\Gamma_\ell$, respectively. 
Let us denote by $H^1(\Omega_\ell)$ and $H_0^1(\Omega_\ell)$ the usual spaces of functions  defined by 
\begin{equation*}
H^1(\Omega_\ell) = \left\{ v \in L^2(\Omega_\ell) |\ \partial_{x_i}v \in L^2(\Omega_\ell),  i = 1, 2, \ldots  ,n \right\},
\end{equation*}
and
\begin{equation*}
H_0^1(\Omega_\ell) = \left\{v \in H^1(\Omega_\ell) | \ v = 0 \text{ on
  } \  \del\Omega_\ell \right\},
\end{equation*}
or in a more precise way, $H_0^1(\Omega_\ell)$ is the closure of
$C^\infty_c(\Omega_\ell)$ in $H^1(\Omega_\ell)$.
The space  $H_0^1(\Omega_\ell)$ is equipped with the norm
\begin{equation}
\label{eq:6}
\| \nabla v \|^2_{2, \Omega_\ell} = \int_{\Omega_\ell} |\nabla v|^2.
\end{equation}
A suitable space  for our problem is
\begin{equation*}
V(\Omega_\ell) = \left\{ v \in H^1(\Omega_\ell) \ | \ v = 0  \text{ on
  }\gamma_\ell  \right\},
\end{equation*}
 where the boundary condition should be interpreted in the sense of traces.
Thanks to the Poincar\'e inequality, $V(\Omega_\ell)$ becomes an Hilbert space
when equipped with the  norm \eqref{eq:6}.  For later use we define
the sets 
\begin{equation}
\label{eq:36}
\Omega_\ell^+ = [0, \ell)\times{}\omega \text{ and }  \Omega_\ell^-= (-\ell,0)\times{}\omega,
\end{equation}
We decompose $\Gamma_\ell$ (see \eqref{decom}) into two parts as
$\Gamma_\ell = \Gamma_\ell^+ \cup \Gamma_\ell^-$, where 
\begin{equation}\label{dec}
\Gamma_\ell^+ = \{\ell \}\times\omega ~\text{ and }~   \Gamma_\ell^- =\{ -\ell \}\times\omega\,.
\end{equation}
 Similarly, for the lateral part of $\partial\Omega_\ell$ we define,
 \begin{equation}
   \label{eq:40}
   \gamma_\ell^+ = (0,\ell)\times\partial\omega ~\text{ and }~   \gamma_\ell^- =(-\ell,0)\times\partial\omega\,.
 \end{equation}
 We shall be concerned with the operator
 $-\Div(A(X_2)\nabla u)$ where, for each $X_2\in \omega$,
\begin{equation*}\label{matrix}
\begin{aligned}
  A(X_2)=
\begin{pmatrix}
    a_{11}(X_2)  & A_{12}(X_2) \\
A_{12}^t(X_2) & A_{22}(X_2)
  \end{pmatrix}\end{aligned}
 \end{equation*}
 is a symmetric  $n \times n $ matrix,  $a_{11}\in\R$,
$A_{12}$ is  a $1 \times(n-1)$ matrix and $A_{22}$ is a  $(n-1)\times
(n-1)$ matrix. The components of $A(X_2)$ are assumed to be bounded measurable functions on $\omega$ and we assume the following bound 
\begin{equation}\label{mod}
\| A(X_2)\| \leq C_A\,,~~a.e. \,X_2 \in \omega,
\end{equation}
for the Euclidean operator norm. 
We also assume that $A(X_2)$ is uniformly elliptic and
 denote by $\lambda_A$ the largest positive number for which the
following inequality holds,
\begin{equation}\label{e}
A(X_2)\xi . \xi \geq \lambda_A |\xi|^2\,,~~\forall \xi \in \R^n, \ a.e. \,X_2 \in \omega.
\end{equation}
\bigskip

 The weak formulation of the eigenvalue problem \eqref{eq:3} is to
 find $u\in  H_0^1(\omega)\setminus\{0\}$ and  $\mu{}\in\R$ such that 
\begin{equation}\label{eigen2}
\int_{\omega}(A_{22} \nabla u). \nabla v \,dX_2 = \mu\int_{\omega}uv\,dX_2\,,~~\forall v\in H_0^1(\omega)\,.
\end{equation}
Denote by  $\mu^1$ the first eigenvalue of the problem \eqref{eigen2}
with  
the corresponding  normalized eigenfunction  $W_1$, i.e.,
$\int_\omega |W_1|^2=1$. 
It is well known that $\mu{}^1$ has a variational characterization by
the Rayleigh quotient:
\begin{multline}\label{rel}
\mu^1 = \inf\left\{ \int_{\omega}(A_{22}(X_2)\nabla u).\nabla u
  \big|\,u \in H_0^1(\omega) \text{ s.t. } \int_{\omega}u^2=1 \right\}\\
= \inf_{u \in H_0^1(\omega)\setminus \{0\}}  \frac{\int_{\omega}(A_{22}(X_2)\nabla u).\nabla u }{\int_{\omega}u^2}.
\end{multline}
Moreover, $W_1$ is simple and has constant sign in $\Omega$ (see
\cite{i}). The choice of positive sign leaves us with a unique $W_1$.

 Similarly, the eigenvalue problem \eqref{eq:5} has the following weak
 formulation: find $u\in  V(\Omega_\ell)\setminus\{0\}$ and a real
 number $\lambda$ such that
\begin{equation}\label{eigen1}
\int_{\Omega_\ell}A \nabla u.\nabla v \,dx = \lambda \int_{\Omega_\ell}u v\, dx\,,~~\forall v\in V(\Omega_\ell).
\end{equation}
It is  well known, see \cite{a}, that the first eigenvalue
$\lambda_{\ell}^1$ for \eqref{eigen1} is associated with a variational characterization,
\begin{equation}\label{reiligh}
\lambda_\ell^1 = \inf\left\{ \int_{\Omega_\ell}A\nabla u.\nabla u \,: \,
  u \in V(\Omega_\ell),\, \int_{\Omega_\ell}u^2 = 1
\right\}=\inf_{u \in V(\Omega_\ell)\setminus \{0\}}  \frac{\int_{\Omega_\ell}A(X_2)\nabla u.\nabla u }{\int_{\Omega_\ell}u^2}.
\end{equation}
It is also true, and can be proved in the same way as it is done for
the corresponding Dirichlet problem, that $\lambda_{\ell}^1$ is simple and the
corresponding eigenfunction $u_\ell$ has constant sign in
$\Omega_\ell$, that we should fix as the positive sign in the sequel. 
For some of our results we shall need to impose a certain symmetry
condition on $\omega$ and $A$. 
\begin{definition}
  \label{def:symmetry}
 We shall say that property (S) holds if  $\omega$ is symmetric w.r.t.~the origin (i.e., $-\omega=\omega$) and $A(-X_2)=A(X_2)$.
\end{definition}
From the uniqueness of $u_\ell$ we deduce easily the following
symmetry result.
\begin{proposition}\label{symmetry}
If property (S) holds  then 
 $u_\ell(x_1, X_2) = u_\ell(-x_1, -X_2)$.
\end{proposition}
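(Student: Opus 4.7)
The plan is to exploit the simplicity of $\lambda_\ell^1$ and the positivity normalization of $u_\ell$ by showing that the composition with the central symmetry produces another positive normalized first eigenfunction, which must therefore coincide with $u_\ell$.

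Concretely, I define
\[
\tilde u_\ell(x_1, X_2) := u_\ell(-x_1, -X_2),
\]
and I aim to check that $\tilde u_\ell$ is again a positive first eigenfunction with $\int_{\Omega_\ell}\tilde u_\ell^2 = 1$. First, since $-\omega = \omega$, the map $(x_1,X_2)\mapsto (-x_1,-X_2)$ is a measure-preserving $C^\infty$ diffeomorphism of $\Omega_\ell$ onto itself which sends the lateral part $\gamma_\ell = (-\ell,\ell)\times\partial\omega$ to itself. Hence $\tilde u_\ell \in H^1(\Omega_\ell)$ with $\tilde u_\ell = 0$ on $\gamma_\ell$ in the sense of traces, so $\tilde u_\ell \in V(\Omega_\ell)$. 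Positivity of $u_\ell$ immediately transfers to $\tilde u_\ell$, and the change of variables $y=(y_1,Y_2):=(-x_1,-X_2)$ gives $\int_{\Omega_\ell}\tilde u_\ell^2\,dx = \int_{\Omega_\ell}u_\ell^2\,dy = 1$.

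Next I verify the weak eigenvalue equation \eqref{eigen1} for $\tilde u_\ell$. For any test function $v\in V(\Omega_\ell)$, set $\tilde v(y_1,Y_2):=v(-y_1,-Y_2)$, which also belongs to $V(\Omega_\ell)$ by the same argument. The chain rule gives $\nabla_x \tilde u_\ell(x_1,X_2) = -(\nabla u_\ell)(-x_1,-X_2)$ and similarly for $\tilde v$, so the two sign changes cancel in the bilinear form. Using the assumption $A(X_2) = A(-X_2) = A(Y_2)$ from property (S), the change of variables yields
\[
\int_{\Omega_\ell} A(X_2)\nabla_x \tilde u_\ell \cdot \nabla_x v\, dx
= \int_{\Omega_\ell} A(Y_2)\nabla_y u_\ell \cdot \nabla_y \tilde v\, dy
= \lambda_\ell^1 \int_{\Omega_\ell} u_\ell\, \tilde v\, dy
= \lambda_\ell^1\int_{\Omega_\ell}\tilde u_\ell\, v\, dx,
\]
so $\tilde u_\ell$ solves \eqref{eigen1} with eigenvalue $\lambda_\ell^1$.

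Finally, since $\lambda_\ell^1$ is simple and both $u_\ell$ and $\tilde u_\ell$ are positive normalized eigenfunctions associated to it, we must have $\tilde u_\ell = u_\ell$ a.e.\ in $\Omega_\ell$, which is the desired identity. There is no real obstacle here; the only point requiring care is the bookkeeping of signs in the change of variables to see that the invariance $A(-X_2)=A(X_2)$ is precisely what makes the bilinear form invariant under the central symmetry.
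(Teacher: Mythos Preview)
Your proof is correct and follows exactly the same strategy as the paper: define the reflected function, observe it is a positive normalized eigenfunction for $\lambda_\ell^1$, and conclude by uniqueness. The paper's proof is a one-line ``clearly'' while you have spelled out the verification of the weak formulation and the role of $A(-X_2)=A(X_2)$, but the underlying argument is identical.
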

\begin{proof}
Clearly $v_\ell(x_1, X_2) := u_\ell(-x_1,-X_2)$ is a positive
normalized eigenfunction for $\lambda^1_\ell$, so it must be equal to $u_\ell$.
\end{proof}

\section{The gap phenomenon in a model problem}\label{model}
In this section  we treat a two dimensional model problem in order to
illustrate the main ideas behind the analysis of the general case in
the next sections. Throughout this section $\omega=(-1,1)$,
$\Omega_\ell=(-\ell,\ell)\times(-1,1)$, and the matrix $A$ is a constant
matrix depending on the parameter $\delta\in[0,1)$, namely, 
\begin{equation}
\label{eq:111}
A = A_\delta =
\begin{pmatrix}
1 & \delta \\
\delta & 1
\end{pmatrix}.
\end{equation}
Clearly $A_\delta$ satisfies all the assumptions made on $A$ in
Section~\ref{preliminaries}. Since the eigenvalues of $A_\delta$ are
$1\pm{}\delta$,  $\lambda_A = 1- \delta$ (see \eqref{e}). 
In  this section we shall denote a point in $\R^2$ by $x=(x_1, x_2).$
The  problem \eqref{eigen2} has the following simple form
\begin{equation*}\label{eqnforuinfty}
\left\{\begin{aligned}
-&W_1''= \mu{}^1 W_1 \text{ in } (-1, 1)\,,\\
&W_1(-1) =W_1(1) = 0\,.
\end{aligned}\right.
\end{equation*}
where $\mu{}^1$ denotes the first eigenvalue and $W_1$ is the corresponding
positive normalized eigenfunction. Therefore, $\mu{}^1=(\frac{\pi}{2})^2$ and $W_1(t)=\cos(\frac{\pi}{2}t)$.

\bigskip

\begin{proposition}\label{prop:iff}
For $\delta=0$ we have $\lambda^1_\ell = \mu{}^1$ for all $\ell>0$. For
$\delta\in(0,1)$ we have  
\begin{equation}
  \label{eq:2}
  (1-\delta^2)\mu{}^1<\lambda^1_\ell<\mu{}^1,\,\forall\ell>0.
\end{equation}
\end{proposition}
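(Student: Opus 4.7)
The plan is to derive both strict inequalities from the same pair of weak estimates via a contradiction argument in each equality case; the key algebraic ingredient works for every $\delta\in[0,1)$ and absorbs the case $\delta=0$. The weak bounds come from the completing-of-the-square identity
\begin{equation*}
A_\delta\xi\cdot\xi \;=\; (\xi_1+\delta\xi_2)^2 + (1-\delta^2)\xi_2^2 \;\ge\; (1-\delta^2)\xi_2^2,
\end{equation*}
which, after integrating over $\Omega_\ell$ and applying the slicewise Poincar\'e inequality encoded in \eqref{rel} (valid since $u(x_1,\cdot)\in H_0^1(\omega)$ whenever $u\in V(\Omega_\ell)$), yields $\lambda_\ell^1\ge (1-\delta^2)\mu^1$. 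On the other hand $W_1(x_2)\in V(\Omega_\ell)$ has Rayleigh quotient exactly $\mu^1$, giving $\lambda_\ell^1\le\mu^1$. The two bounds already coincide at $\delta=0$.

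For the strict upper bound when $\delta>0$, I would suppose $\lambda_\ell^1=\mu^1$. Then $W_1(x_2)$ attains the infimum in \eqref{reiligh} and is therefore a weak first eigenfunction of \eqref{eigen1}. Since $-\Div(A_\delta\nabla W_1)=-W_1''=\mu^1 W_1$ pointwise in $\Omega_\ell$, integration by parts in the weak eigenvalue identity collapses it to
\begin{equation*}
\int_{\Gamma_\ell}(A_\delta\nabla W_1)\cdot\nu\; v\,d\sigma \;=\; 0 \quad\forall v\in V(\Omega_\ell).
\end{equation*}
On $\Gamma_\ell^\pm$ one has $\nu=(\pm 1,0)$ and $A_\delta\nabla W_1=(\delta W_1',W_1')$, so the integrand is $\pm\delta W_1'(x_2)\,v(\pm\ell,x_2)$. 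Since the trace of $v$ on either end of $\Gamma_\ell$ can be any element of $H_0^1(\omega)$, this forces $\delta W_1'\equiv 0$ on $\omega$, contradicting $\delta>0$.

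For the strict lower bound, I would suppose $\lambda_\ell^1=(1-\delta^2)\mu^1$ and let $u_\ell>0$ be the positive normalized eigenfunction. Then $u_\ell$ must produce equality at both steps of the derivation: the completed-square step forces $\partial_{x_1}u_\ell+\delta\,\partial_{x_2}u_\ell\equiv 0$ a.e., while the equality case of the slicewise Poincar\'e inequality forces $u_\ell(x_1,x_2)=c(x_1)W_1(x_2)$ for some positive $c\in H^1(-\ell,\ell)$. Substituting gives $c'(x_1)/c(x_1)=-\delta\,W_1'(x_2)/W_1(x_2)$; separating variables forces both sides to be the same constant, but $W_1'/W_1=-(\pi/2)\tan(\pi x_2/2)$ is manifestly non-constant, a contradiction. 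The main obstacle I anticipate is spotting the completing-of-the-square decomposition that both isolates the transverse derivative and exposes the sharp factor $1-\delta^2$; once it is in hand, both strict inequalities reduce to the rigidity arguments above.
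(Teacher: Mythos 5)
Your proof is correct and follows essentially the same route as the paper: the test function $W_1(x_2)$ for the upper bound, the identity $A_\delta\xi\cdot\xi=(\xi_1+\delta\xi_2)^2+(1-\delta^2)\xi_2^2$ together with the slicewise Poincar\'e inequality for the lower bound, and rigidity in the equality cases (violation of the Neumann condition $\delta W_1'=0$ at $\Gamma_\ell$, and the forced form $u_\ell=c(x_1)W_1(x_2)$) to sharpen both to strict inequalities. Your separation-of-variables argument for the lower bound is a slightly more explicit version of the paper's terse "$u_\ell(x_1,x_2)=W_1(x_2)$, hence $\lambda_\ell^1=\mu^1$" contradiction, but the substance is the same.
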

\begin{proof}
{\rm (i)} Since $A_0=\begin{pmatrix} 1 & 0\\
                              0 & 1
             \end{pmatrix}$,
 the corresponding operator is just $-\Delta$, and
 the function $v(x_1,x_2)=W_1(x_2)$ is clearly a  positive
 eigenfunction in \eqref{eq:5} with $\sigma=\mu{}^1$, for all
 $\ell>0$. It follows that $\lambda^1_\ell = \mu{}^1$ as claimed.\\
{\rm (ii)} Assume now that $\delta\in(0,1)$. Using the function
$v(x_1,x_2)=W_1(x_2)$ in the Rayleigh quotient \eqref{reiligh} yields
the inequality
\begin{equation}
  \label{eq:1}
  \lambda^1_\ell\leq \mu{}^1\,.
\end{equation}
 We claim that the inequality in \eqref{eq:1} is strict as stated in \eqref{eq:2}. Indeed, 
 an equality would imply that the function $v$ (as defined above) is a
 positive eigenfunction in \eqref{eq:5} for
 $\sigma=\lambda^1_\ell=\mu^1$, and in particular, it satisfies the
 Neumann boundary condition 
$$
0=(A_\delta\nabla v).\nu=v_{x_1}+\delta
 v_{x_2}=\delta v_{x_2}~\text{ on
 }~\Gamma_\ell^+=\{\ell\}\times(-1,1)\,.
$$
 But this clearly contradicts the fact that $(W_1)'(x_2)\neq0$ for
 $x_2\in(-1,1)\setminus\{0\}$.
 To prove the inequality of the left in \eqref{eq:2} we first notice the
 elementary inequality
 \begin{equation}
   \label{eq:7}
   (A_\delta\vec{\xi}).\vec{\xi} \geq (1 -\delta^2)|\xi_2|^2,~\forall\vec{\xi}=(\xi_1,\xi_2)\in\R^2\,.
 \end{equation}
Indeed, \eqref{eq:7} follows from the identity
\begin{equation}\label{eq:ident}
(A_\delta\vec{\xi}).\vec{\xi}=\xi_1^2+2\delta\xi_1\xi_2+\xi_2^2=(1-\delta^2)\xi_2^2+(\xi_1+\delta\xi_2)^2\,.
\end{equation}
By \eqref{eq:7}  and\eqref{rel}  we get 
\begin{equation}
\label{eq:8}
\begin{aligned}
\lambda_\ell^1 = \int_{\Omega_\ell} (A_\delta \nabla
u_\ell).\nabla u_\ell
&\geq (1 -\delta^2)\int_{\Omega_\ell}|\del_{x_2}u_\ell|^2 \\ &\geq (1 -\delta^2)\mu{}^1\int_{\Omega_\ell} |u_\ell|^2 
= (1 -\delta^2) \mu{}^1.
\end{aligned}
\end{equation}
 To conclude, we show that the inequality $\lambda_\ell^1\geq
 (1-\delta^2)\mu{}^1$ is strict. Indeed, equality would imply equalities
 in all the inequalities in \eqref{eq:8}, implying in particular that
 $u_\ell(x_1,x_2)=W_1(x_2)$ in $\Omega_\ell$. It would then follow
 that 
 $\lambda_\ell^1=\mu^1$. Contradiction.
\end{proof}

 From now on we shall assume that $\delta\in(0,1)$ (the first part of
 \rprop{prop:iff} settles completely the case $\delta=0$). Our main
 result in this section establishes the following estimate about the
 behavior of  $\lambda_\ell^1$ as $\ell$ goes to infinity.
\begin{theorem}\label{th:main-theorem}
$\limsup_{\ell \to \infty} \lambda_\ell^1 < \mu{}^1,$ for every $\delta\in(0,1)$.
\end{theorem}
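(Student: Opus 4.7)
The plan is to construct, for each large $\ell$, an admissible test function $v_\ell \in V(\Omega_\ell)$ with Rayleigh quotient $R(v_\ell) := \int(A_\delta \nabla v_\ell).\nabla v_\ell / \int v_\ell^2$ bounded above by $\mu{}^1 - c$ for some $c > 0$ independent of $\ell$; by the variational characterization \eqref{reiligh}, this yields $\lambda_\ell^1 \leq R(v_\ell)$ and hence the $\limsup$ bound. A crucial preliminary observation is that any \emph{separable} test function $v(x_1, x_2) = \phi(x_1) f(x_2)$ with $f \in H_0^1(-1,1)$ automatically has $R(v) \geq \mu{}^1$, since the cross-term factors as $2\delta \int v_{x_1}v_{x_2} = 2\delta(\int \phi \phi')(\int f f')$ and $\int_{-1}^1 ff' = \tfrac12[f^2]_{-1}^1 = 0$. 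So any successful test function must be genuinely non-separable, and the non-separability must feed into the Rayleigh quotient through the off-diagonal entries of $A_\delta$.

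My concrete choice is
\[
v_\ell(x_1, x_2) = W_1(x_2)\, e^{\alpha(x_1 - \ell)} + s\, g(x_2)\, e^{\beta(x_1 - \ell)},
\]
with fixed $\alpha, \beta > 0$, $\alpha \neq \beta$, a scalar $s \in \R$ to be optimized, and $g \in H_0^1(-1,1)$ an \emph{odd} function with $T := \int_{-1}^1 W_1'(x_2) g(x_2)\, dx_2 \neq 0$; a convenient choice is $g(x_2) = -(x_2/2)\cos(\pi x_2/2)$, the solution of $-g'' - \mu{}^1 g = W_1'$ with Dirichlet data. Clearly $v_\ell \in V(\Omega_\ell)$ since $W_1(\pm 1) = g(\pm 1) = 0$, and the mass of $v_\ell$ concentrates near $x_1 = \ell$. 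Writing $\phi(x_1) = e^{\alpha(x_1-\ell)}$ and $\psi(x_1) = e^{\beta(x_1-\ell)}$, the non-separability (enforced by $\alpha \neq \beta$) creates a non-vanishing Wronskian-type integral $\int_{-\ell}^\ell(\phi \psi' - \phi' \psi)\, dx_1 \to (\beta - \alpha)/(\alpha + \beta)$, which is what will drive the gap.

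Using the orthogonalities $\int W_1 g = 0$ (even/odd), $\int W_1 W_1' = \int g g' = 0$ (Dirichlet), $\int W_1' g' = -\int W_1'' g = \mu{}^1 \int W_1 g = 0$, together with $\int W_1 g' = -T$, $\int g W_1' = T$, and the elementary limits $\int_{-\ell}^\ell e^{2\alpha(x_1 - \ell)}\, dx_1 \to 1/(2\alpha)$, $\int_{-\ell}^\ell \phi \psi\, dx_1 \to 1/(\alpha+\beta)$, one computes directly that as $\ell \to \infty$
\[
\int(A_\delta \nabla v_\ell).\nabla v_\ell - \mu{}^1 \int v_\ell^2 \;\longrightarrow\; \frac{\alpha}{2} + \frac{2\delta s T(\beta - \alpha)}{\alpha + \beta} + s^2\Big[\frac{\beta P}{2} + \frac{Q_0}{2\beta}\Big],
\]
where $P = \int g^2$ and $Q_0 = \int (g')^2 - \mu{}^1 P > 0$ (by Poincar\'e applied to $g \perp W_1$). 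The $s$-linear term comes exclusively from $2\delta \int v_{\ell, x_1} v_{\ell, x_2} = 2\delta s T(\beta - \alpha) \int \phi\psi + o(1)$, and the denominator $\int v_\ell^2 \to 1/(2\alpha) + s^2 P/(2\beta)$ is a finite positive quantity. Optimizing the above quadratic in $s$ yields
\[
\mathrm{Num}^* = \frac{\alpha}{2} - \frac{2\delta^2 T^2 (\beta - \alpha)^2 \beta}{(\alpha + \beta)^2 (\beta^2 P + Q_0)},
\]
and for any fixed $\beta > 0$ with $\alpha > 0$ sufficiently small this is strictly negative (as $\alpha \downarrow 0$ the subtraction tends to $2\delta^2 T^2 \beta/(\beta^2 P + Q_0) > 0$ while $\alpha/2 \to 0$). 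Fixing such $(\alpha, \beta, s = s^*)$, the bound $\limsup_{\ell \to \infty} R(v_\ell) = \mu{}^1 + \mathrm{Num}^*/\mathrm{Den}^* < \mu{}^1$ follows.

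The main technical point is controlling the sign and magnitude of the cross-term $2\delta \int v_{\ell,x_1} v_{\ell,x_2}$: its $s$-linear contribution is exactly $sT(\beta-\alpha)\cdot 2\delta \int \phi\psi$, which vanishes in the degenerate separable case $\alpha = \beta$. All other pieces of the expansion are routine: parity on $(-1,1)$ kills many $x_2$-integrals, the $x_1$-integrals are elementary exponentials with exponentially small tail corrections, and the $\ell \to \infty$ limits commute with the minimization in $s$ by continuity. Any odd $g \in H_0^1(-1,1)$ with $T \neq 0$ works; the Green's-function choice is merely convenient because it makes the sharp value $T^2/Q_0 = 1/4$ explicit.
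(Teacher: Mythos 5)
Your proof is correct and takes a genuinely different route from the paper. The paper proves this theorem by first establishing the dimension-reduction limit $\lim_{\ell\to0}\lambda_\ell^1=(1-\delta^2)\mu^1$ (Theorem~\ref{zero}), using the test function $v_{\ell_0}=W_1-\delta x_1 W_1'\rho_{\ell_0}$, and then, for large $\ell$, glues shifted copies of $v_{\ell_0}$ near both ends of $\Omega_\ell$ to a linear transition region of length $\eta$ and zero in the bulk; taking $\ell_0$ small and $\eta$ large drives the Rayleigh quotient below $\mu^1$. You instead build a self-contained ansatz concentrated near one end, $v_\ell=W_1(x_2)e^{\alpha(x_1-\ell)}+s\,g(x_2)e^{\beta(x_1-\ell)}$ with $g$ odd and $T=\int W_1'g\neq0$, and extract the gap from the Wronskian-type coupling $\int(\phi\psi'-\phi'\psi)$ between the two exponentials, which makes the cross-term $2\delta\int v_{x_1}v_{x_2}$ come out linear in $s$ and of order one; optimizing in $s$ and then sending $\alpha\downarrow0$ gives the strict gap. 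I checked the computation: using $\int W_1 g=\int W_1W_1'=\int gg'=\int W_1'g'=0$, $\int W_1 g'=-T$, $\int gW_1'=T$, and the exponential limits, one indeed finds $\int A_\delta\nabla v_\ell\cdot\nabla v_\ell-\mu^1\int v_\ell^2\to\frac{\alpha}{2}+\frac{2\delta sT(\beta-\alpha)}{\alpha+\beta}+s^2\big[\frac{\beta P}{2}+\frac{Q_0}{2\beta}\big]$, with $Q_0=\int(g')^2-\mu^1\int g^2>0$ by the spectral gap on the orthogonal complement of $W_1$, and minimizing in $s$ and letting $\alpha\downarrow0$ makes this negative while the denominator stays finite and positive. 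Your approach is in some ways simpler: it avoids both the dimension-reduction theorem and the mollification/cut-off machinery, is more explicit in the model case, and is closer in spirit to the exponential test functions the paper later uses on the semi-infinite cylinder in Lemma~\ref{lem:nui} and Theorem~\ref{th:infinity}(i) -- though there the paper relies on a single exponential plus the dimension-reduction piece, whereas your two-rate ansatz produces the gap on its own. What the paper's route buys in exchange is the general Theorem~\ref{th:lim-zero} (the limit as $\ell\to0$ in the non-constant-coefficient case), which is reused directly in the proof of the general gap Theorem~\ref{th1}; your construction, while elegant for the model, would need an analogue of $g$ (a non-symmetric correction with nonzero overlap against $A_{12}\cdot\nabla W_1$) to generalize, which amounts to rediscovering the content of condition~\eqref{con}.
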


In the next section, when dealing with the general case, we shall
actually see that the limit $\lim_{\ell \to \infty} \lambda_\ell^1$ exists. As mentioned in the Introduction, an important ingredient in the
proof  of \rth{th:main-theorem} is a study of the asymptotic behavior of
$\lambda_\ell^1 $ as $\ell \to 0$ (a dimension reduction problem).
\begin{theorem}\label{zero} We have
$\lim_{\ell \to 0} \lambda_\ell^1 = (1- \delta^2)\mu^1$.
\end{theorem}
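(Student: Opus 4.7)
\medskip

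\noindent\textbf{Proof proposal.}

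The lower bound $\liminf_{\ell\to 0}\lambda_\ell^1\geq(1-\delta^2)\mu^1$ is already in hand: in the proof of \rprop{prop:iff}, the inequalities \eqref{eq:7}--\eqref{eq:8} give $\lambda_\ell^1\geq(1-\delta^2)\mu^1$ for every $\ell>0$, with no smallness assumption on $\ell$. So the whole task is to establish the matching upper bound $\limsup_{\ell\to 0}\lambda_\ell^1\leq(1-\delta^2)\mu^1$ by producing a well-chosen family of trial functions in the Rayleigh quotient \eqref{reiligh}.

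The plan is to exploit the identity \eqref{eq:ident}, which shows that the quadratic form $(A_\delta\xi).\xi$ pays no price for $\xi_2$ and is minimized (for fixed $\xi_2$) when $\xi_1=-\delta\xi_2$. This suggests seeking a trial function whose $x_1$-derivative roughly equals $-\delta$ times its $x_2$-derivative evaluated at $W_1$. Concretely, for an arbitrary $\phi\in H_0^1((-1,1))$ I would define
\[
u_\phi(x_1,x_2)=W_1(x_2)-\delta\,x_1\,\phi(x_2).
\]
Since $W_1(\pm 1)=\phi(\pm 1)=0$, the function $u_\phi$ lies in $V(\Omega_\ell)$. Expanding $(A_\delta\nabla u_\phi).\nabla u_\phi$ and integrating over $\Omega_\ell$, the odd-in-$x_1$ terms $2\delta^3x_1\phi\phi'$ and $-2\delta x_1 W_1'\phi'$ drop out; completing the square in the remaining expression via \eqref{eq:ident} gives
\[
\int_{\Omega_\ell}(A_\delta\nabla u_\phi).\nabla u_\phi
=2\ell\int_{-1}^1\!\Big[\delta^2(\phi-W_1')^2+(1-\delta^2)(W_1')^2\Big]dx_2+\tfrac{2\ell^3}{3}\delta^2\!\int_{-1}^1(\phi')^2,
\]
and similarly $\int_{\Omega_\ell}u_\phi^2=2\ell+\tfrac{2\ell^3}{3}\delta^2\int\phi^2$ by normalization of $W_1$. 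Dividing and sending $\ell\to 0$ with $\phi$ fixed yields
\[
\limsup_{\ell\to 0}\lambda_\ell^1\leq\int_{-1}^1\!\Big[\delta^2(\phi-W_1')^2+(1-\delta^2)(W_1')^2\Big]dx_2\qquad\forall\phi\in H_0^1((-1,1)).
\]

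To conclude I would take an infimum over $\phi$. The unconstrained pointwise optimizer is $\phi=W_1'$, which unfortunately does \emph{not} belong to $H_0^1((-1,1))$ (since $W_1'(\pm 1)=\mp\pi/2\neq 0$). However, we only need $W_1'$ to be approximable in $L^2$ by $H_0^1$ functions, and this is immediate since $C_c^\infty((-1,1))$ is dense in $L^2((-1,1))$. Choosing $\phi_n\in H_0^1$ with $\phi_n\to W_1'$ in $L^2$ makes $\int(\phi_n-W_1')^2\to 0$, so the right-hand side tends to $(1-\delta^2)\int_{-1}^1(W_1')^2=(1-\delta^2)\mu^1$. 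Combining with the lower bound completes the proof.

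The only step requiring real care is the last one: one must verify that the boundary-trace obstruction preventing $W_1'\in H_0^1$ does not leave a residual gap, and that the approximation by $\phi_n$ is compatible with the fact that the remainder terms $O(\ell^3)$ in numerator and denominator depend on $\phi_n$ (through $\int(\phi_n')^2$). This is handled by fixing $\phi_n$ first and letting $\ell\to 0$ before letting $n\to\infty$, i.e., a standard diagonal-type iteration of limits. The asymmetry with the Dirichlet case of \cite{dir} is exactly embodied here: the Neumann ends of $\Omega_\ell$ allow trial functions carrying a nonzero $x_1$-shear $-\delta x_1\phi(x_2)$, which is precisely what is needed to drive the Rayleigh quotient down to $(1-\delta^2)\mu^1$.
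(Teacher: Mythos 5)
Your argument is correct, and the underlying ansatz is the same as the paper's — a trial function of the form $W_1(x_2)-\delta\, x_1\, g(x_2)$ with $g$ approximating $W_1'$ in a way that respects the Dirichlet condition on $\gamma_\ell$ — but the way you organize the limits is a genuine and welcome simplification. The paper couples the cutoff to $\ell$: it takes $g=W_1'\rho_\ell$ with $\rho_\ell$ a piecewise-linear cutoff of width $\ell^\alpha$, and then checks that the remainder $\frac{2\delta^2\ell^3}{3}(C_1+C_2\ell^{-2\alpha})$ is negligible as $\ell\to 0$. You instead fix $\phi\in H_0^1((-1,1))$, note that after sending $\ell\to 0$ the bound is $\delta^2\|\phi-W_1'\|_{L^2}^2+(1-\delta^2)\mu^1$ with no dependence on $\|\phi'\|_{L^2}$, and only then invoke density of $H_0^1$ in $L^2$ to push $\|\phi-W_1'\|_{L^2}$ to zero. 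This decoupling removes the need to tune the cutoff width against $\ell$ and makes the structure of the argument transparent: the $H_0^1$-constraint on $\phi$ costs nothing in the limit because $W_1'$ need only be hit in $L^2$. Your handling of the lower bound — reusing \eqref{eq:7}--\eqref{eq:8} — is exactly right; those inequalities are uniform in $\ell$.
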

\begin{proof}
 It suffices to consider $\ell<1$. Fix any $\alpha\in(0,1)$ and let
 $\rho_\ell$ be the piecewise-linear function defined by
 \begin{equation*}
   \rho_\ell(t)=
   \begin{cases}
     \frac{t+1}{\ell^\alpha} & t\in[-1,-1+\ell^\alpha),\\
     1 & t\in [-1+\ell^\alpha,1-\ell^\alpha],\\
    \frac{1-t}{\ell^\alpha} & t\in(1-\ell^\alpha,1]\,.
   \end{cases}
 \end{equation*}
Consider the following test function 
\begin{equation}
\label{eq:50}
v_{\ell}(x_1,\ x_2) =W_1(x_2) -\delta x_1W_1'(x_2)\rho_{\ell}(x_2)\,.
\end{equation}
 Then clearly $v_{\ell} \in V(\Omega_{\ell})$ is a valid test 
function. From  \eqref{reiligh},\ we have 
\begin{equation}\label{rel1}
\begin{aligned}
\lambda_{\ell}^1 \leq \frac{\int_{\Omega_{\ell}}A_{\delta}\nabla v_{\ell}.\nabla v_{\ell}}{\int_{\Omega_{\ell}}v_{\ell}^2} &= 
\frac{\int_{\Omega_{\ell}}|\del_{x_1}v_\ell|^2 + \int_{\Omega_{\ell}}|\del_{x_2}v_\ell|^2 + 2\delta \int_{\Omega_{\ell}}
\del_{x_1}v_\ell\del_{x_2}v_\ell}{\int_{\Omega_{\ell}}v_{\ell}^2}  \\ 
&= \frac{I_1 + I_2 + I_3}{I}\,.
\end{aligned}
\end{equation}
We consider each of the terms  $I_1, I_2, I_3$ and $ I$
separately.  First,  
\begin{eqnarray}\label{I_1}
I_1 = \delta^2 \int_{\Omega_{\ell}}\rho_{\ell}^2|W_1'(x_2)|^2\,dx = 2\ell\delta^2 \int_{-1}^1\rho_{\ell}^2|W_1'(x_2)|^2\,dx_2\,.
\end{eqnarray}
Next, calculating for $I_2$,\ 
\begin{align*}
I_2 &= \int_{\Omega_{\ell}} \Big[W_1'(x_2) - \delta x_1\{ \rho_{\ell}W_1''(x_2) + W_1'(x_2)\rho'_\ell(x_2)\}\Big]^2\\
&= \int_{\Omega_{\ell}} |W_1'|^2 -2\delta\int_{\Omega_{\ell}} x_1W_1'(x_2)\big\{ \rho_{\ell}W_1''(x_2) + W_1'(x_2)\rho'_\ell(x_2)\big\}
\\&\phantom{=} +\delta^2 \int_{\Omega_{\ell}}x_1^2| \rho_{\ell}W_1''(x_2) 
 + W_1'(x_2)\rho_\ell'(x_2)|^2.
\end{align*}
The integral in the middle vanishes since $\int_{-\ell}^{\ell}x_1 = 0$. Hence,
using  $|\rho_{\ell}'| \leq \frac{1}{\ell^{\alpha}}$ and \eqref{rel} we get 
\begin{equation}\label{I_2}
I_2 
=2\ell \mu{}^1  + \frac{2\delta^2\ell^3}{3} \int_{-1}^1 |\rho_{\ell}W_1'' + W_1'\rho'_\ell |^2 
\leq 2\ell \mu{}^1 + \frac{2\delta^2\ell^3}{3} (C_1 + C_2\ell^{-2\alpha}),
\end{equation}
where $C_1, C_2$ are two constants independent of $\ell$. Next,  for
$I_3$ we find,
\begin{multline}\label{Ii}
I_3 = 2\delta \int_{\Omega_{\ell}}-\delta W_1'\rho_{\ell}\left[W_1' -x_1\delta\left\{
W_1'\rho'_\ell + \rho_{\ell}W_1''\right\}\right]\\
=-4\ell\delta^2\int_{-1}^1\rho_{\ell}|W_1'|^2 + 2\delta^3 
\int_{\Omega_\ell}x_1 W_1'\rho_{\ell}\left\{ W_1'\rho'_\ell + \rho_{\ell}W_1''\right\}
= -4\ell\delta^2\int_{-1}^1\rho_{\ell}|W_1'|^2 .
\end{multline}
Finally we compute the term  $I$.
\begin{multline}\label{I}
I = \int_{\Omega_{\ell}}\left(W_1 - \delta x_1W_1'\rho_{\ell} \right)^2  
= \int_{\Omega_{\ell}}W_1^2 + \delta^2\int_{\Omega_{\ell}} x_1^2\rho_{\ell}^2|W_1'|^2 \\
= 2\ell  + \frac{2\ell^3\delta^2}{3}\int_{-1}^1\rho_{\ell}^2|W_1'|^2 \geq 2\ell.
\end{multline}
Plugging \eqref{I_1}--\eqref{I} in
\eqref{rel1} yields
\begin{equation}\label{subs}
\lambda_\ell^1 \leq \delta^2 \int_{-1}^1\rho_{\ell}^2|W_1'|^2 +  
\mu{}^1 -2\delta^2\int_{-1}^1\rho_{\ell}|W_1'|^2  + \varepsilon(\ell),
\end{equation}
where $\varepsilon(\ell) \to 0$ as $\ell \to 0$.  Since  $\rho_{\ell} \to 1$ \ pointwise, passing to the limit $\ell\to0$ and using
dominated convergence for the RHS of \eqref{subs} gives
\begin{equation}
\label{eq:9}
\limsup_{\ell \to 0} \lambda_{\ell}^1 \leq (1- \delta^2)\mu{}^1\,.
\end{equation}
Combining \eqref{eq:9} with \eqref{eq:2} we obtain the result of the
theorem. 
\end{proof}
\bigskip
Now we turn to the proof of \rth{th:main-theorem}.\  
\begin{proof}[Proof of \rth{th:main-theorem}]
 Let $\ell_0$ and $\eta$ be two positive constants whose values will be
 determined later. For $\ell>\ell_0+\eta$ define $\phi_\ell$ by
$$
 \phi_\ell = \begin{cases}
v_{\ell_0}(x_1-\ell +\ell_0, x_2) & \text{ on }  (\ell-\ell_0, \ell)\times{}(-1,1) \,,\\
\frac{\left(x_1- (\ell -\ell_0-\eta)\right)W_1(x_2)}{\eta}  &\text{ on }  (\ell -\ell_0 -\eta, \ell-\ell_0)\times{}(-1, 1)\,, \\
 0  & \text{ on }  \Omega_{\ell -\ell_0-\eta}\,, \\
  \frac{\left(-x_1- (\ell -\ell_0-\eta)\right)W_1(x_2)}{\eta} &  \text{ on }  \left(\ell_0 -\ell, -\ell + \ell_0 + \eta\right)\times(-1, 1)\,, \\
  v_{\ell_0}(x_1 + \ell -\ell_0,  x_2) & \text{ on }  (-\ell, \ell_0 - \ell)\times(-1,1)\,,
\end{cases}
$$
 where $v_{\ell_0}$ is given by \eqref{eq:50}.
 We have
\begin{equation}
\label{squarenorm}
 \begin{aligned}
\int_{\Omega_\ell}\phi_\ell^2 &= \int_{\Omega_\ell\setminus\Omega_{\ell-\ell_0}}\phi_\ell^2 +
\int_{\Omega_{\ell-\ell_0}}\phi_\ell^2 \\ &=
\int_{\Omega_{\ell_0}}v_{\ell_0}^2+2\left(\int_{\ell-\ell_0-\eta}^{\ell-\ell_0}\frac{(x_1-\ell
    + \ell_0 + \eta)^2}{\eta^2}dx_1\right)\left(\int_{-1}^1W_1^2\right) 
\\& = \int_{\Omega_{\ell_0}}v_{\ell_0}^2+ \frac{2}{3}\eta\,,
\end{aligned}
\end{equation}
where we used the fact that $\phi_\ell$ is an even function in $x_1$ on
$\Omega_\ell\setminus\Omega_{\ell-\ell_0}$. 
Also, 
\begin{equation}
\label{eq:73}
\int_{\Omega_\ell}A_\delta\nabla\phi_\ell.\nabla\phi_\ell = \int_{\Omega_{\ell_0}}A_\delta\nabla v_{\ell_0}.\nabla v_{\ell_0}
+ \int_{\Omega_{\ell-\ell_0}}A_\delta \nabla \phi_\ell.\nabla\phi_\ell\,.
\end{equation}
Setting $\mathcal{D} = \Omega_{\ell-\ell_0}\setminus \Omega_{\ell-\ell_0-\eta}$ and using the fact that 
$\phi_\ell$ is even in $\mathcal{D}$ while $\del_{x_1}\phi_\ell$ is odd on
$\mathcal{D}$ we get
\begin{equation}
\label{num}
\begin{aligned}
\int_{\Omega_{\ell-\ell_0}}A_\delta \nabla \phi_\ell.\nabla\phi_\ell &= \frac{1}{\eta^2}\int_{\mathcal{D}}W_1^2 
+2\delta \int_{\mathcal{D}}\del_{x_1}\phi_\ell \del_{x_2}\phi_\ell \\ 
 &\phantom{=}+ \frac{2}{\eta^2} \int_{(\ell-\ell_0-\eta,\ell-\ell_0)\times(-1,1)}|W_1'|^2(x_1-\ell+\ell_0
 +\eta)^2\\
&=  \frac{2}{\eta}\int_{-1}^1W_1^2 +\frac{2\eta}{3}\int_{-1}^1|W_1'|^2 
= \frac{2}{\eta} + \frac{2\eta \mu^1}{3}\,. 
\end{aligned}
\end{equation}
From \eqref{squarenorm}--\eqref{num} we obtain
\begin{equation}
\label{eq:10}
\lambda_\ell^1 \leq \frac{\int_{\Omega_{\ell_0}}A_\delta\nabla v_{\ell_0}.\nabla v_{\ell_0} +\frac{2}{\eta} + \frac{2\eta\mu{}^1}{3} }{\int_{\Omega_{\ell_0}}v_{\ell_0}^2 + \frac{2}{3}\eta}\,. 
\end{equation}
Noting that \rth{zero} implies that
\begin{equation*}
  \frac{\int_{\Omega_{\ell_0}}A_{\delta}\nabla v_{\ell_0}.\nabla v_{\ell_0}}{\int_{\Omega_{\ell_0}}v_{\ell_0}^2} = (1-\delta^2)\mu{}^1 + \varepsilon(\ell_0)\,,
\end{equation*}
we obtain from \eqref{eq:10} that 
\begin{equation}
\label{eq:11}
\begin{aligned}
\lambda_\ell^1 -\mu^1&\leq \frac{\left\{(1-\delta^2)\mu{}^1 +
    \varepsilon(\ell_0)\right\}\int_{\Omega_{\ell_0}}v_{\ell_0}^2 +\frac{2}{\eta} +
  \frac{2\eta\mu{}^1}{3}  }{\int_{\Omega_{\ell_0}}v_{\ell_0}^2 +
  \frac{2}{3}\eta}-\mu^1\\
&= \frac{(\varepsilon(\ell_0)-\delta^2\mu{}^1)\int_{\Omega_{\ell_0}}v_{\ell_0}^2 + \frac{2}{\eta} }{\int_{\Omega_{\ell_0}}v_{\ell_0}^2 + \frac{2}{3}\eta}\,.
\end{aligned}
\end{equation}
Choosing  $\ell_0$ small enough such that $\varepsilon(\ell_0)-\delta^2\mu{}^1 < 0$, and then
taking $\eta $ sufficiently large, makes the RHS of \eqref{eq:11} equal a 
negative number, say $-\delta_0$. Hence,
$\lambda_\ell^1\leq \mu^1-\delta_0$ for $\ell>\ell_0+\eta$, and the result follows.
\end{proof}

\section{The gap phenomenon in the general case.}\label{ge}
In this section we extend the results  from Section~\ref{model} to a more general
framework. We shall use the notation from
Section~\ref{preliminaries} and study the limit $\lim_{\ell\to\infty} \lambda_\ell^1$ for
$\lambda_\ell^1$ given by \eqref{reiligh}. As in Section~\ref{model} our
strategy is to study first the limit as $\ell$ goes to $0$.
\begin{theorem}
  \label{th:lim-zero}
 We have $\lim_{\ell\to0} \lambda_\ell^1=\Lambda^1$ where
 \begin{equation}
   \label{eq:Lambda}
\Lambda^1=\inf\left\{ \int_{\omega}A_{22}(X_2)\nabla u.\nabla
  u-\frac{|A_{12}(X_2).\nabla u|^2}{a_{11}(X_2)}
  :\,u \in H_0^1(\omega),\, \int_{\omega}u^2=1 \right\}.
 \end{equation}
\end{theorem}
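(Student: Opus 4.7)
The plan is to prove the two matching bounds $\liminf_{\ell\to0}\lambda_\ell^1\ge\Lambda^1$ and $\limsup_{\ell\to0}\lambda_\ell^1\le\Lambda^1$. The key algebraic ingredient is the pointwise matrix inequality
\begin{equation*}
(A(X_2)\xi)\cdot\xi\;\ge\;A_{22}(X_2)\xi_2\cdot\xi_2 - \frac{|A_{12}(X_2)\xi_2|^2}{a_{11}(X_2)},\qquad \xi=(\xi_1,\xi_2)\in\R\times\R^{n-1},
\end{equation*}
obtained by completing the square in $\xi_1$ (the general form of the identity \eqref{eq:ident} used in the model case), with equality exactly when $\xi_1=-A_{12}\xi_2/a_{11}$. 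Note that $a_{11}\ge\lambda_A>0$ by the ellipticity assumption \eqref{e}, and the right-hand side is precisely the integrand in the definition \eqref{eq:Lambda} of $\Lambda^1$.

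For the lower bound I apply this inequality with $\xi=\nabla u_\ell(x_1,X_2)$ and integrate over $\Omega_\ell$, obtaining
$$\lambda_\ell^1=\int_{\Omega_\ell}A\nabla u_\ell\cdot\nabla u_\ell\ \ge\ \int_{\Omega_\ell}\!\Bigl[A_{22}\nabla_{X_2}u_\ell\cdot\nabla_{X_2}u_\ell - \tfrac{|A_{12}\cdot\nabla_{X_2}u_\ell|^2}{a_{11}}\Bigr].$$
Since $u_\ell\in V(\Omega_\ell)$ vanishes on $\gamma_\ell$, Fubini and the trace characterization give $u_\ell(x_1,\cdot)\in H_0^1(\omega)$ for a.e.\ $x_1\in(-\ell,\ell)$. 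The variational characterization of $\Lambda^1$ then yields, for a.e.\ such $x_1$,
$$\int_\omega\Bigl[A_{22}\nabla_{X_2}u_\ell\cdot\nabla_{X_2}u_\ell - \tfrac{|A_{12}\cdot\nabla_{X_2}u_\ell|^2}{a_{11}}\Bigr]dX_2 \ \ge\ \Lambda^1 \int_\omega u_\ell(x_1,\cdot)^2\,dX_2,$$
and integrating in $x_1$ and using $\int_{\Omega_\ell}u_\ell^2=1$ gives $\lambda_\ell^1\ge\Lambda^1$ \emph{for every} $\ell>0$.

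For the upper bound I use test functions of the form $v_\ell(x_1,X_2)=u(X_2)+x_1\psi(X_2)$ with $u,\psi\in C_c^\infty(\omega)$, which automatically lie in $V(\Omega_\ell)$. Expanding $A\nabla v_\ell\cdot\nabla v_\ell$ and discarding odd-in-$x_1$ integrands (as in \eqref{I_1}--\eqref{I}), the Rayleigh quotient \eqref{reiligh} becomes
$$\frac{\int_\omega\bigl[a_{11}\psi^2+2\psi\,A_{12}\cdot\nabla u + A_{22}\nabla u\cdot\nabla u\bigr] + O(\ell^2)}{\int_\omega u^2+O(\ell^2)}.$$
Given $\eps>0$, I first pick $u\in C_c^\infty(\omega)$ with $\int_\omega u^2=1$ and $\int_\omega[A_{22}\nabla u\cdot\nabla u - |A_{12}\cdot\nabla u|^2/a_{11}]\le\Lambda^1+\eps/2$; this is legitimate because the quadratic form under the infimum in \eqref{eq:Lambda} is continuous on $H_0^1(\omega)$ (coefficients are $L^\infty$ and $a_{11}^{-1}$ is bounded), and $C_c^\infty(\omega)$ is dense in $H_0^1(\omega)$. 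The pointwise-optimal choice $\psi_{\mathrm{opt}}=-A_{12}\cdot\nabla u/a_{11}$ would make the numerator equal $Q(u)$, but it lies only in $L^2(\omega)$, not in $H^1$. I therefore approximate $\psi_{\mathrm{opt}}$ in $L^2(\omega)$ by $\psi\in C_c^\infty(\omega)$; since $\psi\mapsto\int_\omega[a_{11}\psi^2+2\psi A_{12}\cdot\nabla u]$ is $L^2$-continuous, the numerator can be made $\le\Lambda^1+\eps$. Sending $\ell\to0$ with $u,\psi$ fixed yields $\limsup_{\ell\to0}\lambda_\ell^1\le\Lambda^1+\eps$, and $\eps$ is arbitrary.

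The main obstacle is precisely the need to avoid plugging $\psi_{\mathrm{opt}}$ directly into $v_\ell$: the resulting function need not belong to $H^1(\Omega_\ell)$, since its $\nabla_{X_2}$-derivative would involve distributional derivatives of $A_{12}/a_{11}$ that are unavailable under the bare $L^\infty$ regularity assumed on $A$. The $L^2$-approximation step circumvents this cleanly, and the remaining estimates parallel those in the model computation of \rth{zero}.
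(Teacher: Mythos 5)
Your proof is correct and follows essentially the same strategy as the paper: the lower bound via the completing-the-square matrix inequality applied slice-wise, and the upper bound via test functions linear in $x_1$ whose $X_2$-dependent slope approximates the pointwise-optimal $-A_{12}\cdot\nabla u/a_{11}$ (which lies only in $L^2$) by a smooth function. The only cosmetic difference is that you approximate \emph{both} $u$ and $\psi$ by $C_c^\infty(\omega)$ functions rather than using the exact minimizer $w_1\in H_0^1(\omega)$ and mollifying only the slope as the paper does; this sidesteps the need to establish existence of a minimizer for $\Lambda^1$, but the core ideas and estimates are identical.
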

\begin{proof}
  The reason why we find $\Lambda^1$ as the
  limiting value  will be clarified by  the
  following simple observation. Let 
  $B=\begin{pmatrix}
b_{11} & B_{12} \\
B^t_{12} & B_{22}
\end{pmatrix}$ be a positive definite $n\times n$ matrix and represent any
vector $\vec{z}$ in $\R^n$ as $\vec{z}=(z_1,Z_2)$ with $Z_2\in \R^{n-1}$. Then,
elementary calculus shows that for any fixed $Z_2\in\R^{n-1}$ we have
  \begin{equation}
\label{eq:obs}
    \min_{z_1\in\R} (B\vec{z}).\vec{z}=(B_{22}Z_2).Z_2-\frac{|B_{12}Z_2|^2}{b_{11}}\,.
  \end{equation}
Furthermore, the minimum in \eqref{eq:obs} is attained for
\begin{equation}
\label{eq:13}
z_1=-\frac{B_{12}Z_2}{b_{11}}\,.
\end{equation}
 Applying \eqref{eq:obs} with $B=A(X_2)$ we obtain, for any $\ell>0$,
 \begin{equation}
\label{eq:12}
\begin{aligned}
   \int_{\Omega_\ell} (A(X_2)\nabla u_\ell).\nabla u_\ell&\geq  \int_{\Omega_\ell}
   (A_{22}(X_2)\nabla_{X_2}u_\ell).\nabla_{X_2}u_\ell-\frac{|A_{12}(X_2)\nabla_{X_2}u_\ell|^2}{a_{11}(X_2)}\\
&\geq  \Lambda^1\int_{\Omega_\ell}u_\ell^2\,.
\end{aligned} 
 \end{equation}
 By \eqref{eq:12} the lower-bound 
 \begin{equation}
 \label{eq:liminf}
  \liminf_{\ell\to0} \lambda_\ell^1\geq \Lambda^1\,,
 \end{equation} 
 is clear.
We note that from the above it follows in particular that 

$$\Lambda^1\geq\lambda_A\cdot \inf\left\{ \int_{\omega}|\nabla u|^2
   :\,u \in H_0^1(\omega),\, \int_{\omega}u^2=1 \right\}.$$
(see \eqref{e}) and the infimum in \eqref{eq:Lambda}
is actually a minimum, which is realized by a positive function $w_1\in H^1_0(\omega)$.

In order to complete the proof of \rth{th:lim-zero} we need to establish  the upper-bound part.  A natural
generalization of the construction used in the proof of \rth{zero}
would be to use 
\begin{equation}
  \label{eq:14}
  v_\ell(x)=w_1(X_2)-\frac{\left( A_{12}(X_2).\nabla w_1\right) x_1 \rho_\ell(X_2)}{a_{11}(X_2)}\,,
\end{equation}
 where $\rho_l$ is an appropriate cut-off
function. However, since the 
coefficients of the matrix $A(X_2)$ are only assumed to be $L^\infty$-functions, the function
on the RHS of \eqref{eq:14} does not necessarily belong to $H^1$. To
overcome this difficulty, we use an approximation argument, motivated
by \cite[Ch.~14]{braides}. We apply standard mollification to define
 a family  of functions $\{G_\varepsilon\}_{\varepsilon>0}\subset C^\infty_c(\omega)$ satisfying
\begin{equation}
  \label{eq:15}
 \lim_{\varepsilon\to0} G_\varepsilon(X_2)=\frac{A_{12}(X_2)\cdot \nabla w_1}{a_{11}(X_2)}~\text{ in
 }L^2(\omega)\text{ and a.e..}
\end{equation}
We then define 
\begin{equation}
\label{eq:18}
v_\ell^\varepsilon (x_1,X_2)= w_1(X_2) -G_\varepsilon(X_2)x_1\,.  
\end{equation}
First notice that
\begin{equation}
\label{eq:16}
\int_{\Omega_\ell}|v^\varepsilon_\ell|^2  = \int_{-\ell}^\ell\int_{\omega}w_1^2 - 2x_1w_1G_\varepsilon+ \left( x_1G_\varepsilon\right)^2
\geq 2\ell \int_{\omega}w_1^2= 2\ell,
\end{equation}
since $\int_{-\ell}^{\ell}x_1\,dx_1 = 0.$ Now
\begin{align*}
\int_{\Omega_\ell}A\nabla v_\ell^\varepsilon.\nabla v_\ell^\varepsilon &= \int_{\Omega_\ell} a_{11}(\partial_{x_1} v_\ell^\varepsilon)^2 +  2
(A_{12}.\nabla_{X_2}v_\ell^\varepsilon) \partial_{x_1}v_\ell^\varepsilon + (A_{22}\nabla_{X_2} v_\ell^\varepsilon).\nabla_{X_2} v_\ell^\varepsilon \\
&= I_1(\varepsilon) + I_2(\varepsilon) + I_3(\varepsilon)\,.
\end{align*}
For the first integral we have 
\begin{equation}\label{firs}
I_1 (\varepsilon)= \int_{\Omega_\ell}a_{11} G_\varepsilon^2  = 2\ell \int_\omega a_{11} G_\varepsilon^2\,.
\end{equation}
For the second integral,
 \begin{equation}
\label{eq:69}
 I_2(\varepsilon) = 
 2\int_{-\ell}^\ell \int_{\omega} A_{12}.\Big\{\nabla w_1 -x_1\nabla 
   G_\varepsilon(X_2) \Big\} \Big\{-G_\varepsilon(X_2)\Big\}\,.
 \end{equation}
 Since the integral of the term containing  $x_1$ vanishes, we get 
\begin{equation}\label{seco}
I_2 (\varepsilon)= -4\ell\int_\omega(A_{12}. \nabla w_1)G_\varepsilon\,.
\end{equation}
For the last integral we have (after dropping the term with the vanishing integral),
\begin{multline}
\label{thrd}
I_3(\varepsilon) =  \int_{-\ell}^\ell \int_{\omega}
(A_{22}\nabla w_1).\nabla w_1+ x_1^2(A_{22}\nabla G_\varepsilon) .\nabla G_\varepsilon \\=2\ell\Big\{
\int_\omega (A_{22}\nabla w_1).\nabla w_1+\frac{\ell^2}{3}\int_\omega
(A_{22}\nabla G_\varepsilon).\nabla G_\varepsilon \Big\} \,.
\end{multline}
By \eqref{eq:16}--\eqref{thrd} we deduce that
\begin{multline}
\label{eq:17}
          \limsup_{\ell\to0} \lambda_\ell^1\leq 
          \limsup_{\ell\to0} \frac{\int_{\Omega_\ell} A\nabla v_\ell^\varepsilon.\nabla v_\ell^\varepsilon}{\int_{\Omega_\ell} |v_\ell^\varepsilon|^2}\leq\\    
          \int_\omega a_{11}\left( G_\varepsilon\right)^2-2\int_\omega(A_{12}. \nabla w_1)G_\varepsilon
          +\int_\omega \big(A_{22}\nabla w_1\big).\nabla w_1\,.
   \end{multline} 
   Passing to the limit $\varepsilon\to0$ in \eqref{eq:17}, using \eqref{eq:15}, gives
 \begin{equation*}
    \limsup_{\ell\to0} \lambda_\ell^1\leq 
    \int_{\omega}(A_{22}\nabla  w_1).\nabla 
  w_1-\frac{|A_{12}\nabla  w_1|^2}{a_{11}}=\Lambda^1\,,
 \end{equation*}
  which together with \eqref{eq:liminf} yields the result.
\end{proof}
\begin{remark}
  \label{rem:Ww}
 Replacing \eqref{eq:18} by 
 \begin{equation}
   \label{eq:22}
 \tilde v_\ell^\varepsilon (x_1,X_2)= W_1(X_2) -\tilde G_\varepsilon(X_2)x_1\,, 
 \end{equation}
where $\tilde G_\varepsilon$ is defined as in  \eqref{eq:15}, but with $w_1$
replacing   $W_1$, and carrying out  the same computation as in the last part of the proof of
 \rth{th:lim-zero} yields
 \begin{equation}
   \label{eq:23}
\inf_{\varepsilon>0} \lim_{\ell\to0}\frac{\int_{\Omega_\ell} (A\nabla \tilde v_\ell^\varepsilon).
  \nabla \tilde v_\ell^\varepsilon}{\int_{\Omega_\ell}|\tilde v_\ell^\varepsilon|^2}=\int_{\omega}(A_{22}\nabla  W_1).\nabla 
  W_1-\frac{|A_{12}\nabla  W_1|^2}{a_{11}}\,.
 \end{equation}
\end{remark}
Our next theorem provides an analog of \rth{th:main-theorem} to the general case.  
\begin{theorem}\label{th1}
We have 
\begin{equation}\label{res} 
\limsup_{\ell \to \infty }  \lambda_\ell^1 < \mu^1,
\end{equation}
 provided the following condition holds,
\begin{equation}\label{con}
A_{12}.\nabla W_1\not\equiv 0\text{ a.e. on } \omega.
\end{equation}
 In case \eqref{con} does not hold we have $\lambda_\ell^1 = \mu^1$ for all $\ell>0$.
\end{theorem}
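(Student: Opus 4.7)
The plan is to handle the two cases of Theorem \ref{th1} separately. When $A_{12}\cdot\nabla W_1\equiv 0$ a.e.\ on $\omega$, I would verify directly that $v(x_1,X_2):=W_1(X_2)$ is a positive eigenfunction of \eqref{eigen1} with eigenvalue $\mu^1$, so simplicity of $\lambda_\ell^1$ forces $\lambda_\ell^1=\mu^1$ for every $\ell$. Indeed $v\in V(\Omega_\ell)$ and $\partial_{x_1}v\equiv 0$, so $A\nabla v=(A_{12}\cdot\nabla W_1,\,A_{22}\nabla W_1)$; the first coordinate vanishes a.e., and for any $\psi\in V(\Omega_\ell)$ the slice $\psi(x_1,\cdot)$ lies in $H_0^1(\omega)$ for a.e.\ $x_1$, so applying \eqref{eigen2} sliceswise gives $\int_{\Omega_\ell}A\nabla v\cdot\nabla\psi=\mu^1\int_{\Omega_\ell}v\psi$.

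Now assume \eqref{con} and set $\kappa:=\int_\omega |A_{12}\cdot\nabla W_1|^2/a_{11}\,dX_2>0$. My strategy is to reproduce the gluing construction of Theorem~\ref{th:main-theorem}, but using the regularised test functions $\tilde v_{\ell_0}^\varepsilon$ of Remark~\ref{rem:Ww} in place of the explicit model functions $v_{\ell_0}$. By Remark~\ref{rem:Ww} combined with \eqref{rel},
\begin{equation*}
\inf_{\varepsilon>0}\lim_{\ell_0\to 0}\frac{\int_{\Omega_{\ell_0}}A\nabla\tilde v_{\ell_0}^\varepsilon\cdot\nabla\tilde v_{\ell_0}^\varepsilon}{\int_{\Omega_{\ell_0}}|\tilde v_{\ell_0}^\varepsilon|^2}\ =\ \mu^1-\kappa,
\end{equation*}
so one can fix $\varepsilon_0>0$ and a small $\ell_0>0$ for which this Rayleigh quotient is at most $\mu^1-\kappa/2$. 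The interface value $\tilde v_{\ell_0}^{\varepsilon_0}(0,X_2)=W_1(X_2)$ is what makes the gluing below compatible.

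For $\ell>\ell_0+\eta$ I would then define $\phi_\ell\in V(\Omega_\ell)$ by copying $\tilde v_{\ell_0}^{\varepsilon_0}$ (appropriately translated in $x_1$) into the two end slabs near $x_1=\pm\ell$, linearly interpolating between $W_1(X_2)$ and $0$ across two transition slabs of width $\eta$, and setting $\phi_\ell\equiv 0$ on $\Omega_{\ell-\ell_0-\eta}$---this is exactly the template of the proof of Theorem~\ref{th:main-theorem}. On the transition slabs the diagonal terms contribute in total $\frac{2}{\eta}\int_\omega a_{11}W_1^2+\frac{2\eta\mu^1}{3}$, and the off-diagonal contribution $\int 2(A_{12}\cdot\nabla_{X_2}\phi_\ell)\partial_{x_1}\phi_\ell$ cancels between the two transition slabs by the same odd-in-$x_1$ parity used in \eqref{num}, the cancellation being possible precisely because $A=A(X_2)$ does not depend on $x_1$. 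Running the bookkeeping that produced \eqref{eq:11} then yields
\begin{equation*}
\lambda_\ell^1-\mu^1\ \leq\ \frac{-\tfrac{\kappa}{2}\,D\,+\,\tfrac{2}{\eta}\int_\omega a_{11}W_1^2}{D+\tfrac{2\eta}{3}},\qquad D:=\int_{\Omega_{\ell_0}}|\tilde v_{\ell_0}^{\varepsilon_0}|^2,
\end{equation*}
whose right-hand side is a negative constant independent of $\ell$ once $\eta$ is chosen large enough, establishing \eqref{res}.

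The principal technical obstacle, and the reason Remark~\ref{rem:Ww} uses mollification in the first place, is that the ``natural'' candidate $W_1(X_2)-x_1(A_{12}\cdot\nabla W_1)/a_{11}$ need not lie in $H^1$ when $A$ is merely $L^\infty$. Pre-smoothing $A_{12}\cdot\nabla W_1/a_{11}$ by a compactly supported $\tilde G_{\varepsilon_0}$ before gluing repairs this, at the cost of a controllable loss in the Rayleigh estimate on the end slabs; since $\kappa>0$ is a fixed gap, this loss can still be absorbed by taking $\varepsilon_0$ and $\ell_0$ small.
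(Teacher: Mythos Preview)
Your proof of part~(i) is correct and follows the paper's argument essentially verbatim: both select $\ell_0,\varepsilon_0$ via Remark~\ref{rem:Ww} so that \eqref{eq:19} holds, build the glued test function $\phi_\ell$ of \eqref{eq:38}, observe the parity cancellation of the cross term on the transition slabs, and arrive at \eqref{eq:24}, of which your displayed estimate (with the explicit gap $\kappa/2$) is a particular instance.

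For part~(ii) you take a slightly different route from the paper. You verify directly that $v(x_1,X_2)=W_1(X_2)$ is a positive eigenfunction of \eqref{eigen1} with eigenvalue $\mu^1$ (the hypothesis $A_{12}\cdot\nabla W_1\equiv 0$ kills the first component of $A\nabla v$, and slicewise use of \eqref{eigen2} handles the rest). The paper instead sandwiches $\Lambda^1\leq\lambda_\ell^1\leq\mu^1$ via \eqref{eq:25} and then shows $\Lambda^1=\mu^1$ by observing that $W_1$ is a positive eigenfunction of the reduced problem \eqref{eq:26}. Both arguments ultimately rest on the same principle---only the first eigenvalue of a self-adjoint elliptic problem admits a positive eigenfunction---but applied to different problems (\eqref{eigen1} in your case, \eqref{eq:26} in the paper's). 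Your route is a bit more direct since it avoids introducing $\Lambda^1$; the paper's route has the advantage of recycling the lower bound \eqref{eq:12} already proved in Theorem~\ref{th:lim-zero}. One terminological remark: your conclusion rests not on \emph{simplicity} of $\lambda_\ell^1$ per se, but on the positivity characterization of the first eigenvalue just mentioned.
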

\begin{remark}
  It is easy to construct examples where condition \eqref{con} doesn't
  hold. Take for example for $\omega$ the unit disc in $\R^2$. For $A_{22}=\begin{pmatrix} 1 & 0\\
                              0 & 1
             \end{pmatrix}$, 
  the eigenfunction $W_1$ is radially symmetric. We use polar
  coordinates on $\omega$ and represent each $X_2$ as
  $X_2=r(\cos\theta,\sin\theta)$. Taking $a_{11}=1$ and 
  $A_{12}(X_2)=t(-\sin \theta,\cos \theta)$ for $|t|$ small enough
  (in order for the uniform ellipticity condition \eqref{e} to hold
  for the $3$ by $3$ matrix $A$)
  yields  an example for which \eqref{con} doesn't hold. 
\end{remark}
\begin{proof}
{\rm (i)} Assume first that \eqref{con} holds. Then,
\begin{equation}
\label{eq:Lam<mu}
 \Lambda_1<\mu{}^1.
\end{equation}
Indeed, this follows from
\begin{equation*}
 \Lambda_1\leq \int_{\omega}A_{22}(X_2)\nabla W_1.\nabla
  W_1-\frac{|A_{12}(X_2)\nabla W_1|^2}{a_{11}(X_2)}<\int_{\omega}A_{22}(X_2)\nabla W_1.\nabla
  W_1=\mu{}^1\,.
\end{equation*}
By the proof of \rth{th:lim-zero} there exist  positive values of
$\ell_0$ and $\varepsilon_0$ such that $\tilde v_{\ell_0}^{\varepsilon_0}$ defined by \eqref{eq:22}
satisfies
\begin{equation}
  \label{eq:19}
\int_{\Omega_{\ell_0}} A\nabla\tilde
v_{\ell_0}^{\varepsilon_0}.\nabla\tilde
v_{\ell_0}^{\varepsilon_0}<\mu{}^1\int_{\Omega_{\ell_0}}|\tilde v_{\ell_0}^{\varepsilon_0}|^2\,.
\end{equation}
Notice  that $\tilde v_{\ell_0}^{\varepsilon_0}(0,X_2)=W_1(X_2)$. Let $\eta>0$ be a
parameter whose value will be determined later. For $\ell>\ell_0+\eta$ define $\phi_\ell$ as follows,
\begin{equation}
\label{eq:38}
 \phi_\ell = 
 \begin{cases}
 \tilde v_{\ell_0}^{\varepsilon_0}(x_1-\ell +\ell_0,  X_2)  &\text{ on }  (\ell-\ell_0,
 \ell)\times\omega \,, \\
\frac{\left(x_1- (\ell -\ell_0-\eta)\right) W_1(X_2)}{\eta}  &\text{ on }  (\ell -\ell_0 -\eta, \ell-\ell_0)\times\omega \,,\\
 0   &\text{ on }  \Omega_{\ell -\ell_0-\eta}\,, \\
  \frac{\left(-x_1- (\ell -\ell_0-\eta)\right)W_1(X_2)}{\eta}  & \text{ on }  \left(\ell_0 -\ell, -(\ell-\ell_0-\eta)\right)\times\omega \,,\\
  \tilde v_{\ell_0}^{\varepsilon_0}(x_1 + \ell -\ell_0,  X_2) &\text{ on }  (-\ell, \ell_0 - \ell)\times\omega\,.
\end{cases}
\end{equation}
 Since 
\begin{equation*}
  \int_{\Omega_\ell\setminus\Omega_{\ell-\ell_0}}\phi_\ell^2= \int_{\Omega_{\ell_0}}|\tilde v_{\ell_0}^{\varepsilon_0}|^2\,,
\end{equation*}
and
\begin{equation*}
   \int_{\Omega_{\ell-\ell_0}}\phi_\ell^2=2\left(\int_{\ell-\ell_0-\eta}^{\ell-\ell_0}\frac{(x_1-\ell + \ell_0 + \eta)^2}{\eta^2}dx_1\right)\left(\int_{\omega}W_1^2\,dX_2\right) = \frac{2}{3}\eta\,,
\end{equation*}
we have
\begin{equation}\label{squarenorm2}
\int_{\Omega_\ell}\phi_\ell^2 = \int_{\Omega_{\ell_0}}|\tilde v_{\ell_0}^{\varepsilon_0}|^2 + \frac{2}{3}\eta\,.
\end{equation} 
Similarly
\begin{equation}
\label{eq:20}
\int_{\Omega_\ell}A\nabla\phi_\ell.\nabla\phi_\ell = \int_{\Omega_{\ell_0}}A\nabla  \tilde v_{\ell_0}^{\varepsilon_0}.\nabla  \tilde v_{\ell_0}^{\varepsilon_0}
+ \int_{\Omega_{\ell-\ell_0}}A\nabla \phi_\ell.\nabla\phi_\ell\, .
\end{equation}
 Setting $D = \Omega_{\ell-\ell_0}\setminus \Omega_{\ell-\ell_0-\eta}$ and 
$D^{+} = (\ell - \ell_0 -\eta,  \ell-\ell_0)\times{}\omega$, the last integral above can be written as
\begin{multline*}
\int_{\Omega_{\ell-\ell_0}}A\nabla \phi_\ell.\nabla\phi_\ell = \frac{1} {\eta^2}\int_{D}a_{11}W_1^2+ 
2\int_{D}\big(A_{12}.\nabla_{X_2}\phi_\ell\big) \del_{x_1} \phi_\ell  \\
 + \frac{2} {\eta^2}\int_{D^+} (x_1-\ell  +\ell_0 +\eta)^2A_{22}\nabla W_1.\nabla W_1.
\end{multline*}
The second integral vanishes since its integrand is an odd function of
$x_1$ on $D$. Therefore,
\begin{equation}
\label{eq:21}
\int_{\Omega_{\ell-\ell_0}}A\nabla \phi_\ell.\nabla\phi_\ell = \frac{2}{\eta} \int_{\omega} a_{11} W_1^2 + \frac{2\eta}{3}
\int_{\omega} A_{22}\nabla W_1.\nabla W_1 
= \frac{2}{\eta} \int_{\omega} a_{11} W_1^2 + \frac{2\eta\mu{}^1}{3}\,.
\end{equation}
Combining \eqref{squarenorm2}, \eqref{eq:20} and \eqref{eq:21}  we obtain
\begin{equation*}
\lambda_\ell^1 \leq  \frac{\int_{\Omega_\ell}A\nabla  \phi_\ell.\nabla \phi_\ell}{\int_{\Omega_\ell}\phi_\ell^2}
\leq \frac{\int_{\Omega_{\ell_0}}A\nabla  \tilde v_{\ell_0}^{\varepsilon_0}.\nabla \tilde v_{\ell_0}^{\varepsilon_0} +\frac{2}{\eta} \int_{\omega} a_{11}W_1^2 + 
\frac{2}{3}\eta\mu{}^1}{\int_{\Omega_{\ell_0}}|\tilde v_{\ell_0}^{\varepsilon_0}|^2+ \frac{2}{3}\eta }\,.
\end{equation*}
Therefore,
\begin{equation}
\label{eq:24}
\lambda_\ell^1 - \mu^1 \leq   \frac{      \int_{\Omega_{\ell_0}}A\nabla  \tilde
    v_{\ell_0}^{\varepsilon_0}.\nabla \tilde v_{\ell_0}^{\varepsilon_0}-\mu^1 \int_{\Omega_{\ell_0}}|\tilde
    v_{\ell_0}^{\varepsilon_0}|^2   + \frac{2}{\eta} \int_{\omega}
 a_{11}W_1^2}{\int_{\Omega_{\ell_0}}|\tilde v_{\ell_0}^{\varepsilon_0}|^2 + \frac{2}{3}\eta }.
\end{equation}
By \eqref{eq:19} it is clear that we can fix a large enough value for
$\eta$ such that the RHS of \eqref{eq:24} is negative, and the result for
case (i) follows.\\[2mm]
(ii)  By \eqref{eq:12} we have $\Lambda_1\leq
\lambda_\ell^1$ for all $\ell>0$. On the other hand, using $u(x)=W_1(X_2)$ as a
test function in \eqref{reiligh} gives $\lambda_\ell^1\leq \mu{}^1$. Thus we have,
\begin{equation}
  \label{eq:25}
 \Lambda_1\leq\lambda_\ell^1\leq \mu{}^1\,,\quad \forall\ell>0\,.
\end{equation}
In view of \eqref{eq:25}, the result for the case where
\eqref{con} doesn't hold would follow once we show that in this case  $\Lambda_1=\mu^1$. The Euler-Lagrange equation for
an eigenfunction $v$ of the quadratic  form in \eqref{eq:Lambda},
with eigenvalue $\lambda$  is
\begin{equation}
  \label{eq:26}
\left\{ 
\begin{aligned}
 -&\Div (A_{22}\nabla v)+\Div((A_{12}.\nabla v)A_{12}^t/{a_{11}})=\lambda v~\text{ in }\omega\,,\\
&v=0~\text{ on }\partial\omega\,.
\end{aligned}
\right.
\end{equation}
 Of course $v=w_1$ satisfies \eqref{eq:26} with
 $\lambda=\Lambda_1$. But since we assume that
  \eqref{con} doesn't hold, $v=W_1$ is also a solution of \eqref{eq:26} with
 $\lambda=\mu^1$. However, only the first  eigenvalue of the
 problem \eqref{eq:26} can  have a positive eigenfunction, so we must have $\Lambda_1=\mu^1$ as claimed.
\end{proof}

\section{Characterization of the limit $\lim_{\ell\to\infty} \lambda^1_\ell$}
\label{sec:lim}
In this section we obtain more precise results on the asymptotic
behavior of the eigenfunctions$\{u_\ell\}$ and the eigenvalues
$\{\lambda_\ell^1\}$ as $\ell$ goes to infinity. We shall see that when \eqref{con}
holds, the eigenfunctions decay to zero in the bulk of the cylinder
and concentration occurs near the bases of the cylinder.  We denote by  $[x]$  the  integer part of $x$. 
\begin{theorem}\label{th:decay}
Assume \eqref{con} holds. Then, there exist  $\alpha\in(0,1)$ and a positive constant $c$ such that for
$\ell>\ell_0$ we have, for every $0<r\leq \ell-1$,
\begin{align}
\label{eq:27}
\int_{\Omega_r} u_\ell^2 &\leq \alpha^{[\ell-r]}\,,\\
 \shortintertext{and}
   \label{eq:32}
\int_{\Omega_{r}} |\nabla u_\ell|^2&\leq c \alpha^{[\ell-r]}\,.
 \end{align}
\end{theorem}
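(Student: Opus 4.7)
The plan is to derive exponential decay of the mass function $g(r) := \int_{\Omega_r} u_\ell^2$ via a cutoff energy identity, exploiting the spectral gap $\mu^1 - \lambda_\ell^1$. By \rth{th1}, since \eqref{con} holds, $\limsup_{\ell\to\infty}\lambda_\ell^1 < \mu^1$, so I may fix $\ell_0$ and $\delta_0 > 0$ such that $\mu^1 - \lambda_\ell^1 \geq \delta_0$ for every $\ell > \ell_0$.

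Fix $\ell > \ell_0$ and $r \in (0, \ell-1]$. Introduce the one-variable Lipschitz cutoff $\eta = \eta_r(x_1)$, equal to $1$ on $[-r,r]$, zero outside $[-(r+1), r+1]$, and linear in between (so $|\eta'| \leq 1$). Since $u_\ell$ vanishes on $\gamma_\ell$ and $\eta$ vanishes at $x_1 = \pm(r+1)$, the product $\eta u_\ell$ belongs to $H_0^1(\Omega_\ell)$. Testing \eqref{eigen1} against the admissible function $\eta^2 u_\ell \in V(\Omega_\ell)$ and expanding $A\nabla(\eta u_\ell).\nabla(\eta u_\ell) = \eta^2 A\nabla u_\ell.\nabla u_\ell + 2\eta u_\ell A\nabla u_\ell.\nabla\eta + u_\ell^2 A\nabla\eta.\nabla\eta$ produces the identity
\[
\int_{\Omega_\ell} A\nabla(\eta u_\ell).\nabla(\eta u_\ell) = \lambda_\ell^1 \int_{\Omega_\ell} \eta^2 u_\ell^2 + \int_{\Omega_\ell} u_\ell^2\, A\nabla\eta.\nabla\eta.
\]
By Theorem~A the first Dirichlet eigenvalue satisfies $\sigma_\ell^1 \geq \mu^1$, and its Rayleigh characterization gives $\int A\nabla v.\nabla v \geq \mu^1 \int v^2$ for every $v \in H_0^1(\Omega_\ell)$. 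Applied to $v = \eta u_\ell$, combined with the identity, and using $A\nabla\eta.\nabla\eta = a_{11}(\eta')^2 \leq C_A$ supported in $\{r \leq |x_1| \leq r+1\}$, this yields
\[
(\mu^1 - \lambda_\ell^1)\, g(r) \leq C_A\, (g(r+1) - g(r)).
\]
Inserting $\mu^1 - \lambda_\ell^1 \geq \delta_0$ rearranges to $g(r+1) \geq \beta\, g(r)$ with $\beta := 1 + \delta_0/C_A > 1$. Iterating $[\ell-r]$ times from $r$ up to $r+[\ell-r] \leq \ell$ and using $g(\ell) = 1$ gives \eqref{eq:27} with $\alpha := 1/\beta \in (0,1)$.

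For the gradient estimate \eqref{eq:32}, rearrange the test-function expansion as $\int \eta^2 A\nabla u_\ell.\nabla u_\ell = \lambda_\ell^1 \int \eta^2 u_\ell^2 - 2\int \eta u_\ell\, A\nabla u_\ell.\nabla\eta$. Ellipticity \eqref{e} bounds the LHS below by $\lambda_A \int \eta^2 |\nabla u_\ell|^2$; by Young's inequality the cross term is dominated by $(\lambda_A/2)\int \eta^2 |\nabla u_\ell|^2 + C\int u_\ell^2 |\nabla\eta|^2$, leaving $\int_{\Omega_r} |\nabla u_\ell|^2 \leq C' g(r+1)$. Applying \eqref{eq:27} at level $r+1$ with the elementary estimate $[\ell-r-1] \geq [\ell-r] - 1$ yields \eqref{eq:32} with $c := C'/\alpha$.

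The main technical hinge is securing the Rayleigh lower bound $\int A\nabla v.\nabla v \geq \mu^1 \int v^2$: it holds on $H_0^1(\Omega_\ell)$ (via Theorem~A) but in general fails on the larger space $V(\Omega_\ell)$ under \eqref{con} — which is exactly why the cutoff $\eta$ must be engineered so that $\eta u_\ell$ lands in $H_0^1(\Omega_\ell)$, i.e., vanishes also at the ends $\Gamma_\ell$.
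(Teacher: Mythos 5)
Your proof is correct and follows essentially the same route as the paper: the cutoff $\eta_r$ in the $x_1$ variable, testing the weak equation with $\eta^2 u_\ell$, the algebraic identity converting this to a statement about $\int A\nabla(\eta u_\ell).\nabla(\eta u_\ell)$, the crucial observation that $\eta u_\ell\in H_0^1(\Omega_\ell)$ so the Dirichlet Rayleigh quotient (and hence $\sigma_\ell^1\geq\mu^1$ from Theorem~A) applies, the spectral gap from Theorem~\ref{th1}, and the resulting geometric iteration of $g(r)=\int_{\Omega_r}u_\ell^2$. The only cosmetic differences are that the paper keeps $\sigma_\ell^1$ in the estimate rather than immediately replacing it by $\mu^1$, and for the gradient bound the paper uses the identity directly (bounding $\lambda_A\int_{\Omega_r}|\nabla u_\ell|^2$ by $\int A\nabla(\rho_r u_\ell).\nabla(\rho_r u_\ell)$) instead of your Young's-inequality absorption — both give the same conclusion.
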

\begin{proof}
Let $\ell$ and $\ell'$ satisfy  $0< \ell^{'} \leq \ell-1$. Define $\rho_{\ell^{'}}=\rho_{\ell^{'}}(x_1)$ by
\begin{equation}
  \label{eq:28}
\rho_{\ell^{'}}(x_1)=
\begin{cases}
   1 & |x_1|\leq \ell'\,,\\
\ell'+1-|x_1|& |x_1|\in (\ell',\ell'+1)\,,\\
   0 & |x_1|\geq \ell'+1\,.
   \end{cases}
\end{equation}
 Using  $v =\rho_{\ell^{'}}^2u_\ell \in V(\Omega_\ell)$
in \eqref{eigen1}, we get
\begin{equation*}
\int_{\Omega_\ell} (A\nabla u_\ell). \nabla (\rho_{\ell^{'}}^2u_\ell) = \lambda_\ell^1 \int_{\Omega_\ell}\rho_{\ell^{'}}^2u_\ell^2\,,
\end{equation*}
i.e.,
\begin{equation} \label{ll}
 \int_{\Omega_\ell} \big(A \nabla (\rho_{\ell^{'}}u_\ell)\big).\nabla (\rho_{\ell^{'}}u_\ell) - 
\int_{\Omega_\ell}u_\ell^2 (A\nabla \rho_{\ell^{'}}).\nabla \rho_{\ell^{'}} = \lambda_\ell^1\int_{\Omega_\ell}\rho_{\ell^{'}}^2u_\ell^2\,.
\end{equation}
Since $\rho_{\ell^{'}} u_\ell \in H_0^1(\Omega_\ell)$, by the Rayleigh quotient
characterization  of $\sigma_\ell^1$ (see \eqref{eq:4}) we have
\begin{equation}\label{diri}
\sigma_\ell^1  \int_{\Omega_\ell}u_\ell^2 \rho_{\ell^{'}}^2 \leq 
\int_{\Omega_\ell} A\nabla(\rho_{\ell^{'}}u_\ell).\nabla (\rho_{\ell^{'}}u_\ell)\,.
\end{equation}
Combining \eqref{ll}--\eqref{diri} with \eqref{mod} we get
\begin{equation}\label{zero1}
\begin{aligned}
(\sigma_\ell^1 -\lambda_\ell^1) \int_{\Omega_\ell} u_\ell^2 \rho_{\ell^{'}}^2 \leq
\int_{\Omega_\ell}u_\ell^2 (A\nabla \rho_{\ell^{'}}).\nabla \rho_{\ell^{'}}&= \int_{\Omega_{\ell'+1}\setminus \Omega_{\ell'}}u_\ell^2 (A\nabla
\rho_{\ell^{'}}).\nabla \rho_{\ell^{'}}\\
&\leq C_A  \int_{\Omega_{\ell'+1}\setminus \Omega_{\ell'}}u_\ell^2\,.
\end{aligned}
\end{equation}
By \eqref{upplow} and \eqref{res} there exists $\beta>0$ such that for
$\ell>\ell_0$ we have $\sigma_\ell^1 -\lambda_\ell^1\geq \beta$. Therefore, from \eqref{zero1} we
deduce that 
\begin{equation*}
(C_A+\beta)\int_{\Omega_{\ell^{'}}} u_\ell^2  \leq C_A \int_{\Omega_{\ell^{'}+1}}u_\ell^2\,.
\end{equation*}
This leads to 
\begin{equation}\label{iteration}
\int_{\Omega_{\ell^{'}}} u_\ell^2 \leq \alpha\int_{\Omega_{\ell^{'} + 1}} u_\ell^2\,,
\end{equation}
 with  $\alpha=\frac{C_A}{C_A+\beta}<1$.  Applying \eqref{iteration} successively for $\ell^{'} = r, r+1, \ldots,r+ [\ell-r]-1$ yields
\begin{equation}
\label{eq:29}
\int_{\Omega_r} u_\ell^2 \leq \alpha^{[\ell-r]} \int_{\Omega_\ell}u_\ell^2  =\alpha^{[\ell-r]}  \,.
\end{equation}
To prove \eqref{eq:32}, we fix $r\in(0,\ell-2)$ and then use
\eqref{ll}, with $\ell'=r$, combined with 
\eqref{e} and \eqref{eq:1}, to obtain
\begin{multline}
\label{eq:30}
\lambda_A \int_{\Omega_r}|\nabla u_\ell| ^2\leq \int_{\Omega_\ell} A \nabla (
\rho_{r}u_\ell).\nabla ( \rho_{r}u_\ell)\\ = 
\int_{\Omega_\ell}u_\ell^2 (A\nabla  \rho_{r}).\nabla \rho_{r} +
\lambda_\ell^1\int_{\Omega_\ell} \rho_{r}^2u_\ell^2\leq (C_A+\mu^1)\int_{\Omega_{r+1}}u_\ell^2\,.
\end{multline}
Finally, \eqref{eq:32} follows from \eqref{eq:29}--\eqref{eq:30} for $r\leq \ell -2$. Choosing a step size of $\frac 12$ in the first part of the proof would allow $r\leq \ell -1$. 
\end{proof}

The decay of the eigenfunction in the bulk immediately implies concentration near the two ends of the cylinder.
\begin{corollary}
 If \eqref{con} holds then for every $r\in(0,\ell-1]$ we have
\begin{equation}
\label{eq:31}
\int_{\Omega_\ell\setminus \Omega_{r}} u_\ell^2 \geq
1-\alpha^{[\ell-r]}~\text{ and }~\int_{\Omega_\ell\setminus \Omega_{r}}
\!\!\!A\nabla u_\ell.\nabla u_\ell\geq \lambda_\ell^1-c_1\alpha^{[\ell-r]}\,.
\end{equation}
\end{corollary}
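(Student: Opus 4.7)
The plan is to derive both inequalities as simple consequences of Theorem~\ref{th:decay} combined with the normalization and eigenvalue identities for $u_\ell$. I would first record the two identities that make the corollary essentially a one-line calculation in each part: since $u_\ell$ is the normalized first eigenfunction, $\int_{\Omega_\ell} u_\ell^2 = 1$, and inserting $v = u_\ell$ into \eqref{eigen1} gives $\int_{\Omega_\ell} A\nabla u_\ell.\nabla u_\ell = \lambda_\ell^1$. (I will also note that the second estimate of the corollary appears to have a typo: the natural statement has the integration region $\Omega_\ell\setminus\Omega_r$, not $\Omega_\ell\setminus\Omega_{\ell-1}$, since otherwise the $r$-dependence in the right-hand side has no companion on the left.)

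For the first inequality I would write
\begin{equation*}
\int_{\Omega_\ell\setminus\Omega_r} u_\ell^2 \;=\; 1 - \int_{\Omega_r} u_\ell^2
\end{equation*}
and then invoke \eqref{eq:27} from \rth{th:decay} to bound the subtracted term by $\alpha^{[\ell-r]}$, yielding the desired lower bound.

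For the second inequality I would similarly split
\begin{equation*}
\int_{\Omega_\ell\setminus\Omega_r} A\nabla u_\ell.\nabla u_\ell \;=\; \lambda_\ell^1 - \int_{\Omega_r} A\nabla u_\ell.\nabla u_\ell
\end{equation*}
and then control the subtracted term by the operator-norm bound \eqref{mod}:
\begin{equation*}
\int_{\Omega_r} A\nabla u_\ell.\nabla u_\ell \;\leq\; C_A \int_{\Omega_r} |\nabla u_\ell|^2 \;\leq\; C_A\, c\, \alpha^{[\ell-r]},
\end{equation*}
where the last step is \eqref{eq:32}. Setting $c_1 = c C_A$ gives the claim.

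There is no real obstacle here; this corollary is just a mass/energy bookkeeping consequence of the exponential decay in the bulk proved in \rth{th:decay}, using only the normalization $\int_{\Omega_\ell} u_\ell^2 = 1$, the eigenvalue identity $\int_{\Omega_\ell} A\nabla u_\ell.\nabla u_\ell = \lambda_\ell^1$, and the uniform bound on $A$. The only mild subtlety is that the decay estimates in \rth{th:decay} were stated only for $r \leq \ell - 1$ (with an aside that a finer step size would give $r \leq \ell - 1$ for the gradient bound as well), which is exactly the range assumed in the corollary, so no extra argument is needed.
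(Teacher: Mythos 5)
Your proof is correct and is exactly the bookkeeping argument that the paper clearly has in mind: the corollary is stated without proof, with the remark that the decay estimates of Theorem~\ref{th:decay} ``immediately imply'' it, and your two-line deductions from $\int_{\Omega_\ell} u_\ell^2 = 1$, $\int_{\Omega_\ell} A\nabla u_\ell.\nabla u_\ell = \lambda_\ell^1$, the bound \eqref{mod}, and \eqref{eq:27}--\eqref{eq:32} are the intended content. You are also right that the integration domain in the second inequality of \eqref{eq:31} should read $\Omega_\ell\setminus\Omega_r$ rather than $\Omega_\ell\setminus\Omega_{\ell-1}$; with the fixed domain $\Omega_\ell\setminus\Omega_{\ell-1}$ and the $r$-dependent bound $\lambda_\ell^1 - c_1\alpha^{[\ell-r]}$, taking $r$ small would demand more than the decay estimates can deliver, so the statement as printed is a typo, and your corrected version is both what is provable and what is actually used later (e.g.\ in \eqref{eq:56}).
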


To have a more precise description of  the asymptotic behavior of
$\lambda_\ell^1$ we introduce
two variational problems on  semi-infinite cylinders. Set 
\begin{equation*}
  \Omega_{\infty}^+ =  (0, \infty) \times\omega ~\text{ and }~ \Omega_{\infty}^- =  (-\infty,0) \times\omega\,,
\end{equation*}
 and denote the corresponding  lateral parts of the boundary by
 \begin{equation*}
   \gamma_\infty^{+} = (0, \infty)\times\partial\omega ~\text{ and }~   \gamma_\infty^{-} = (-\infty,0)\times\partial\omega\,.
 \end{equation*}
Define the spaces 
\begin{equation*}
V(\Omega_{\infty}^\pm) :=  \{ u\in H^1(\Omega_{\infty}^\pm)\,: \, u=0 \text{ on } \gamma_\infty^{\pm} \}\,,
\end{equation*}
 and set
\begin{equation}
\label{eq:33}
\nu_\infty^\pm= \inf_{0\neq u \in V(\Omega_{\infty}^\pm )} \frac{\int_{\Omega_\infty^\pm} A\nabla u.\nabla
  u}{\int_{\Omega_\infty^\pm} u^2}\,.
\end{equation}
\begin{remark}
\label{rem:reflection}
  In case property (S) holds (see Definition~\ref{def:symmetry}) we
  clearly have $\nu_\infty^+=\nu_\infty^-$ as we can use the transformation
  $v(x_1,X_2)\mapsto v(-x_1,-X_2)$ to pass from a function in $V(\Omega_{\infty}^+ )$
  to a function in $V(\Omega_{\infty}^-)$  (and vice versa) that has the same
  Rayleigh quotient. 
 In general we can only assert that $\nu_\infty^-=\tilde\nu_\infty^+$ where
 $\tilde\nu_\infty^+$ is defined as in \eqref{eq:33}, but with $A$ being
 replaced by  $\tilde A$, given by
   \begin{equation*}
\begin{aligned}
  \tilde A(X_2)=
\begin{pmatrix}
    a_{11}(X_2)  & -A_{12}(X_2) \\
-A_{12}^t(X_2) & A_{22}(X_2)
  \end{pmatrix}\,.\end{aligned}
 \end{equation*}
 This is easily seen by applying the transformation $v(x_1,X_2)\mapsto v(-x_1,X_2)$.
\end{remark}
The next lemma gives the possible range of values for
$\nu_\infty^\pm$.
\begin{lemma}
  \label{lem:nui}
 We have
 \begin{equation}
   \label{eq:63}
   0<\nu_\infty^\pm\leq \mu^1\,.
 \end{equation}
\end{lemma}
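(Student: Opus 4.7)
The plan is to establish the two inequalities in \eqref{eq:63} separately. By symmetry (see Remark~\ref{rem:reflection}, which relates $\nu_\infty^-$ to an analogous problem with $A_{12}$ replaced by $-A_{12}$), it suffices to prove both bounds for $\nu_\infty^+$; the argument for $\nu_\infty^-$ is verbatim the same, with $(0,\infty)$ replaced by $(-\infty,0)$.

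For the lower bound $\nu_\infty^+>0$, I would exploit the fact that the Dirichlet condition on the lateral boundary forces a slice-wise Poincar\'e inequality. Take any $u\in V(\Omega_\infty^+)\setminus\{0\}$; since $u=0$ on $\gamma_\infty^+$, for almost every $x_1>0$ the slice $u(x_1,\cdot)$ lies in $H^1_0(\omega)$, so the Poincar\'e inequality on $\omega$ gives a constant $C_P>0$ (depending only on $\omega$) with
\begin{equation*}
\int_\omega |u(x_1,X_2)|^2\,dX_2\leq C_P\int_\omega |\nabla_{X_2} u(x_1,X_2)|^2\,dX_2\,.
\end{equation*}
Integrating in $x_1$ and then invoking the uniform ellipticity \eqref{e} yields
\begin{equation*}
\int_{\Omega_\infty^+}u^2\leq C_P\int_{\Omega_\infty^+}|\nabla_{X_2} u|^2\leq C_P\int_{\Omega_\infty^+}|\nabla u|^2\leq \frac{C_P}{\lambda_A}\int_{\Omega_\infty^+}A\nabla u.\nabla u\,.
\end{equation*}
Taking the infimum over admissible $u$ produces $\nu_\infty^+\geq \lambda_A/C_P>0$.

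For the upper bound, I would construct an explicit minimizing family. Let $\phi_n\in C_c^\infty([0,\infty))$ (or piecewise linear for simplicity) satisfy $\phi_n=1$ on $[0,n]$, $\phi_n$ linearly decreasing from $1$ to $0$ on $[n,n+1]$, and $\phi_n=0$ on $[n+1,\infty)$. Set $u_n(x_1,X_2)=\phi_n(x_1)W_1(X_2)$, which lies in $V(\Omega_\infty^+)$ since $W_1$ vanishes on $\partial\omega$. Expanding $A\nabla u_n.\nabla u_n$ using the block form of $A$ splits the numerator of the Rayleigh quotient into three pieces, each a product of an $x_1$-integral and an $X_2$-integral. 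The $x_1$-factors are $\int_0^\infty(\phi_n')^2=1$, $\int_0^\infty\phi_n\phi_n'=-\tfrac12$, and $\int_0^\infty\phi_n^2=n+\tfrac13$, while the $X_2$-factors are respectively $\int_\omega a_{11}W_1^2$, $2\int_\omega(A_{12}.\nabla W_1)W_1$, and $\int_\omega(A_{22}\nabla W_1).\nabla W_1=\mu^1$ (by \eqref{eigen2} with $v=W_1$). The denominator equals $(n+\tfrac13)$. Thus the first two contributions stay bounded as $n\to\infty$ while both numerator and denominator of the $\mu^1$-term grow linearly in $n$, and the Rayleigh quotient tends to $\mu^1$. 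Hence $\nu_\infty^+\leq \mu^1$.

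The only mildly delicate point is the middle cross term involving $A_{12}.\nabla W_1$: it does not vanish in general (that very non-vanishing drives the gap phenomenon of \rth{th1}), but since $\int_0^\infty\phi_n\phi_n'$ remains bounded (it equals $-\tfrac12\phi_n(0)^2$) the term is $O(1)$ and therefore negligible against the denominator $n+\tfrac13$. Beyond this, the argument is routine.
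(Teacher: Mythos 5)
Your proof is correct and follows essentially the same path as the paper: reduce to $\nu_\infty^+$ via Remark~\ref{rem:reflection}, obtain the lower bound from the Poincar\'e inequality on slices, and obtain the upper bound by testing with a separated function $f(x_1)W_1(X_2)$ that becomes asymptotically flat in $x_1$. The only difference is cosmetic -- the paper chooses $f(x_1)=e^{-\varepsilon x_1}$ and lets $\varepsilon\to 0$ rather than your compactly supported piecewise-linear cutoff $\phi_n$ -- and the resulting computation of the Rayleigh quotient tending to $\mu^1$ is the same.
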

\begin{proof}
   By \rrem{rem:reflection} it is enough to consider
   $\nu_\infty^+$. The fact that $\nu_\infty^+>0$ follows from the 
   Poincar\'e inequality. In order to show that $\nu_\infty^+\leq \mu^1$
 we set for each $\varepsilon>0$,
 \begin{equation*}
   v_\varepsilon(x)=e^{-\varepsilon x_1} W_1(X_2)\,.
 \end{equation*}
Clearly $v_\varepsilon\in V(\Omega_\infty^+)$ and a direct computation
gives
\begin{equation}
\label{eq:51}
\begin{aligned}
   \int_{\Omega_\infty^+} A\nabla v_\varepsilon.\nabla
   v_\varepsilon&=
\int_{\Omega_\infty^+}\!\! e^{-2\varepsilon
x_1}\Big(a_{11}\varepsilon^2W_1^2-2\varepsilon(A_{12}.\nabla W_1)W_1+
A_{22}\nabla W_1.\nabla W_1\Big)\\
&=(\int_0^\infty\!\! e^{-2\varepsilon x_1})\Big(\mu^1+\varepsilon^2 \int_\omega
a_{11}W_1^2-2\varepsilon \int_\omega (A_{12}.\nabla W_1)W_1\Big)\,,
\end{aligned}
\end{equation}
  and
  \begin{equation}
\label{eq:52}
    \int_{\Omega_\infty^+} v_\varepsilon^2=\int_0^\infty e^{-2\varepsilon
  x_1}\,\big(=\frac{1}{2\varepsilon}\big)\,.
  \end{equation}
 By \eqref{eq:51}--\eqref{eq:52} we obtain
 \begin{equation*}
   \frac{ \int_{\Omega_\infty^+} A\nabla v_\varepsilon.\nabla
   v_\varepsilon}{\int_{\Omega_\infty^+} v_\varepsilon^2}=\mu^1-2\varepsilon \int_\omega (A_{12}.\nabla W_1)W_1+\varepsilon^2 \int_\omega
a_{11}W_1^2\,,
 \end{equation*}
 so by sending $\varepsilon$ to $0$ we deduce that $\nu_\infty^+\leq \mu^1$.
\end{proof}
It is easy to identify $\nu_\infty^\pm$ with the limits, as
$\ell\to\infty$, of certain minimization problems on
$\Omega_\ell^{\pm}$. This is the content of the next lemma (see
\eqref{dec} and \eqref{eq:40}  for the definitions of
$\gamma_\ell^\pm$ and $\Gamma_\ell^\pm$).
\begin{lemma}
  \label{lem:lim-semi}
   We have $\nu_\infty^\pm=\lim_{\ell\to\infty}
   \tilde{\lambda}_\ell^{1,\pm}\,,$ where
\begin{equation}
   \label{eq:35}
  \tilde{\lambda}_\ell^{1,\pm} = \inf\{ \int_{\Omega_\ell^{\pm}}A\nabla u.\nabla u \,: \,
  u\in H^1(\Omega_\ell^{\pm}),\, \int_{\Omega_\ell^{\pm}}u^2 = 1, u=0 \text{ on
  }\gamma_\ell^{\pm}\cup\Gamma_\ell^\pm\}\,.
 \end{equation}
\end{lemma}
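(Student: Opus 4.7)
The plan is to show the two inequalities $\liminf_{\ell\to\infty}\tilde\lambda_\ell^{1,+}\geq\nu_\infty^+$ and $\limsup_{\ell\to\infty}\tilde\lambda_\ell^{1,+}\leq\nu_\infty^+$ separately (the $-$ case is handled identically, or can be reduced to the $+$ case via the map $x_1\mapsto -x_1$ as in \rrem{rem:reflection}). Existence of the limit is not given for free; it will follow from monotonicity: if $\ell_1<\ell_2$ and $u$ is admissible in \eqref{eq:35} for $\ell_1$, then its extension by $0$ to $\Omega_{\ell_2}^+$ lies in $H^1(\Omega_{\ell_2}^+)$, still vanishes on $\gamma_{\ell_2}^+\cup\Gamma_{\ell_2}^+$ (it is identically $0$ on $(\ell_1,\ell_2)\times\omega$), and has the same Rayleigh quotient. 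Hence $\ell\mapsto\tilde\lambda_\ell^{1,+}$ is non-increasing, and the limit exists.

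For the inequality $\nu_\infty^+\leq\tilde\lambda_\ell^{1,+}$ (hence $\nu_\infty^+\leq\liminf_{\ell\to\infty}\tilde\lambda_\ell^{1,+}$), the same extension-by-zero trick works: any $u$ admissible in \eqref{eq:35} extends to $\tilde u\in V(\Omega_\infty^+)$ with identical numerator and denominator in the Rayleigh quotient \eqref{eq:33}.

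For the reverse inequality, given $\varepsilon>0$ pick a test function $v\in V(\Omega_\infty^+)$ with $\int_{\Omega_\infty^+}v^2=1$ and $\int_{\Omega_\infty^+}A\nabla v.\nabla v<\nu_\infty^++\varepsilon$. Introduce a one-variable cutoff $\chi_R(x_1)$ with $\chi_R\equiv 1$ on $[0,R]$, $\chi_R\equiv 0$ on $[R+1,\infty)$, and $|\chi_R'|\leq 1$. Set $v_R(x)=\chi_R(x_1)v(x)$, so $v_R\in V(\Omega_\infty^+)$ is supported in $[0,R+1]\times\overline\omega$. Since $v\in H^1(\Omega_\infty^+)$, dominated convergence gives $v_R\to v$ and $\nabla v_R\to\nabla v$ in $L^2(\Omega_\infty^+)$ as $R\to\infty$; using \eqref{mod} the Rayleigh quotient of $v_R$ therefore converges to that of $v$. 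Choose $R$ large enough that this quotient is $<\nu_\infty^++2\varepsilon$. For any $\ell>R+1$, the restriction $v_R|_{\Omega_\ell^+}$ lies in $H^1(\Omega_\ell^+)$, vanishes on $\gamma_\ell^+$ (inherited from $V(\Omega_\infty^+)$), and vanishes on $\Gamma_\ell^+=\{\ell\}\times\omega$ because $v_R\equiv 0$ in a neighborhood of that face. Hence $v_R|_{\Omega_\ell^+}$ is admissible in \eqref{eq:35} and supplies the bound $\tilde\lambda_\ell^{1,+}\leq\nu_\infty^++2\varepsilon$. Letting $\ell\to\infty$ and then $\varepsilon\to 0$ completes the argument.

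The main subtlety is not conceptual but notational: one must check carefully that the cutoff $v_R$, built on the semi-infinite cylinder, really gives an admissible competitor on the finite half-cylinder satisfying the mixed boundary conditions prescribed in \eqref{eq:35} (Dirichlet on $\gamma_\ell^+\cup\Gamma_\ell^+$, free on the remaining face $\{0\}\times\omega$). The support condition $\mathrm{supp}\,v_R\subset[0,R+1]\times\overline\omega$ with $R+1<\ell$ makes the trace on $\Gamma_\ell^+$ vanish for free, which is exactly what makes the extension/truncation argument compatible with these mixed boundary data.
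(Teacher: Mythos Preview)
Your proof is correct and follows essentially the same approach as the paper: monotonicity and the lower bound via extension by zero, then the upper bound by approximating a near-minimizer in $V(\Omega_\infty^+)$ by functions compactly supported in the $x_1$-direction. The only cosmetic difference is that the paper invokes the density of the subspace $V_s(\Omega_\infty^+)$ (see \eqref{eq:41}) as a known fact, whereas you explicitly construct the approximants via the cutoff $\chi_R$, which is precisely the standard way to prove that density.
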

\begin{remark}
\label{rem:uellt}
  It is a standard fact that the infimum in \eqref{eq:35} is actually
  attained. The unique positive normalized minimizers will be denoted
  by $\tilde u_\ell^\pm$.
\end{remark}
\begin{proof}
   We present the proof for $\tilde{\lambda}_\ell^{1,+}$ as the
   proof for $\tilde{\lambda}_\ell^{1,-}$is completely
 analogous. Note first that the limit $\lim_{\ell\to\infty}
   \tilde{\lambda}_\ell^{1,+}$ exists since the function $\ell\mapsto
   \tilde{\lambda}_\ell^{1,+}$ is non increasing. Indeed, if
   $\ell_1<\ell_2$ then any admissible function in \eqref{eq:35} for
   $\tilde{\lambda}_{\ell_1}^{1,+}$ can be extended to an admissible
   function for $\tilde{\lambda}_{\ell_2}^{1,+}$ by setting it to zero
   on $\Omega_{\ell_2}^+\setminus \Omega_{\ell_1}^+$. A similar
   argument shows that 
 $\tilde{\lambda}_\ell^{1,+}\geq \nu_\infty^+$, for any $\ell>0$.
On the other hand, the density of the space 
\begin{equation}
\label{eq:41}
V_s(\Omega_{\infty}^+) =  \{ u\in C^{\infty}(\Omega_{\infty}^+)\cap V(\Omega_\infty^+)\,:\,
\exists  M=M(u) > 0 \text{ s.t. } u = 0 \text{ on }  (M, \infty)\times\omega \}\,,
\end{equation}
in $V(\Omega_\infty^+)$ implies that for each $u\in
V(\Omega_\infty^+)\setminus\{0\}$ and any $\varepsilon>0$ we can find
an $\ell_\varepsilon$ and $v_\varepsilon\in V_s(\Omega_{\infty}^+)$
with $\text{supp}(v_\varepsilon)\subset \Omega_{\ell_\varepsilon}^{+}$
such that
\begin{equation*}
 \left|\frac{\int_{\Omega_\infty^+} (A\nabla v_\varepsilon) .\nabla v_\varepsilon
  }{\int_{\Omega_\infty^+} v_\varepsilon^2}-\frac{\int_{\Omega_\infty^+} (A\nabla u).\nabla
  u}{\int_{\Omega_\infty^+} u^2}\right|\leq \varepsilon\,,
\end{equation*}
 and \eqref{eq:35}  follows (for $\tilde{\lambda}_\ell^{1,+}$).
\end{proof}
 Our next result  complements the result of \rth{th1} in two ways: by showing that
 the limit $\lim_{\ell \to \infty }  \lambda_\ell^1$ exists and by
 identifying its value. 
 \begin{theorem}
   \label{th:lim-lam}
  We have 
\begin{equation}\label{eq:lim-lam} 
 \lim_{\ell \to \infty }  \lambda_\ell^1 =\min(\nu_\infty^+,\nu_\infty^-)\,.
\end{equation}
 \end{theorem}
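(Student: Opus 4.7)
The plan is to prove the two bounds $\limsup_{\ell\to\infty}\lambda_\ell^1\le\min(\nu_\infty^+,\nu_\infty^-)$ and $\liminf_{\ell\to\infty}\lambda_\ell^1\ge\min(\nu_\infty^+,\nu_\infty^-)$ separately.

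For the upper bound, I would build test functions from the positive normalized minimizers $\tilde u_{\ell_0}^\pm$ of the truncated problems $\tilde\lambda_{\ell_0}^{1,\pm}$ (available by \rrem{rem:uellt}). Fix $\ell_0$ and $\ell>\ell_0$. Place $\tilde u_{\ell_0}^+$ at the left end of $\Omega_\ell$ through the pure translation $x_1\mapsto x_1+\ell$, so that the Neumann end $\{0\}\times\omega$ of $\Omega_{\ell_0}^+$ aligns with the Neumann end $\{-\ell\}\times\omega$ of $\Omega_\ell$, and extend by zero on the remainder of $\Omega_\ell$. Since $\tilde u_{\ell_0}^+=0$ on $\Gamma_{\ell_0}^+$, the extension lies in $V(\Omega_\ell)$, and since $A$ depends only on $X_2$, translation preserves the Rayleigh quotient. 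Hence $\lambda_\ell^1\le\tilde\lambda_{\ell_0}^{1,+}$; letting $\ell\to\infty$ and then $\ell_0\to\infty$, \rlemma{lem:lim-semi} yields $\limsup_{\ell\to\infty}\lambda_\ell^1\le\nu_\infty^+$. The mirror construction with $\tilde u_{\ell_0}^-$ placed at the right end of $\Omega_\ell$ gives $\limsup_{\ell\to\infty}\lambda_\ell^1\le\nu_\infty^-$, completing the upper bound.

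For the lower bound, first assume condition \eqref{con} holds, so the exponential decay of \rth{th:decay} is available. Choose smooth $f,g:\R\to[0,1]$ with $f^2+g^2\equiv 1$, $f\equiv 0$ on $(-\infty,0]$ and $f\equiv 1$ on $[1,\infty)$, so that $\operatorname{supp}(f')\cup\operatorname{supp}(g')\subset[0,1]$. Setting $\phi^+=u_\ell f$ and $\phi^-=u_\ell g$, the identity $f\nabla f+g\nabla g=0$ gives the IMS-type decomposition
\begin{equation*}
\int_{\Omega_\ell}A\nabla u_\ell.\nabla u_\ell
= \int_{\Omega_\ell}A\nabla\phi^+.\nabla\phi^+
+\int_{\Omega_\ell}A\nabla\phi^-.\nabla\phi^-
- R_\ell,
\end{equation*}
with $R_\ell=\int_{\Omega_\ell}u_\ell^2\,a_{11}\bigl((f')^2+(g')^2\bigr)$. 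Now $\phi^+$, supported in $[0,\ell)\times\omega$ and vanishing on the lateral boundary and at $x_1=0$, becomes admissible for $\tilde\lambda_\ell^{1,-}$ after the translation $y=x_1-\ell$ (Dirichlet at $y=-\ell$ and on the lateral boundary, Neumann at $y=0$); analogously $\phi^-$, supported in $(-\ell,1]\times\omega$, translates into an admissible competitor for $\tilde\lambda_{\ell+1}^{1,+}$. Using $(\phi^+)^2+(\phi^-)^2=u_\ell^2$ and $\int_{\Omega_\ell}u_\ell^2=1$ one deduces
\begin{equation*}
\lambda_\ell^1 \ge \min\bigl(\tilde\lambda_\ell^{1,-},\,\tilde\lambda_{\ell+1}^{1,+}\bigr) - R_\ell.
\end{equation*}
Since $f',g'$ are supported in $[0,1]$, \rth{th:decay} gives $R_\ell\le C\int_{\Omega_1}u_\ell^2\le C\alpha^{[\ell-1]}\to 0$, and \rlemma{lem:lim-semi} yields $\liminf_{\ell\to\infty}\lambda_\ell^1\ge\min(\nu_\infty^+,\nu_\infty^-)$.

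When \eqref{con} fails, \rth{th1} gives $\lambda_\ell^1=\mu^1$ for all $\ell$; combined with $\min(\nu_\infty^+,\nu_\infty^-)\le\mu^1$ from \rlemma{lem:nui} and the upper bound just established (which yields $\mu^1\le\min(\nu_\infty^+,\nu_\infty^-)$), equality follows with no further work. The main technical point I expect to handle carefully is matching each half $\phi^\pm$ to the correctly oriented semi-infinite problem — the Neumann end of $\phi^+$ lies at the right end of $\Omega_\ell$, which after translation becomes the Neumann end of $\Omega_\ell^-$, so $\phi^+$ is tested against $\tilde\lambda_\ell^{1,-}\to\nu_\infty^-$ rather than $\tilde\lambda^{1,+}$ — and ensuring that the small region where $f',g'$ are active falls inside the exponential-decay zone from \rth{th:decay} so that $R_\ell\to 0$.
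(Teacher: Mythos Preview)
Your proof is correct and follows the same three-part strategy as the paper: the upper bound by transplanting the truncated minimizers $\tilde u_{\ell_0}^\pm$ into $\Omega_\ell$, the lower bound (when \eqref{con} holds) by splitting $u_\ell$ near $x_1=0$ with the decay from \rth{th:decay} and comparing each half to the appropriate $\tilde\lambda_{\ell}^{1,\pm}$, and the degenerate case via \rth{th1} and \rlemma{lem:nui}. The only cosmetic difference is that for the lower bound you use the IMS localization identity $f^2+g^2\equiv 1$, whereas the paper uses a piecewise-linear cutoff $\rho$ and the weighted-average inequality $\min(\tilde\lambda_{\ell+1}^{1,-},\tilde\lambda_{\ell+1}^{1,+})\le D_\ell^+\tilde\lambda_{\ell+1}^{1,-}+D_\ell^-\tilde\lambda_{\ell+1}^{1,+}$; both are packaging the same computation, and your version is arguably a bit cleaner.
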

 \begin{proof}
  {\rm (i)} We shall first show that
  \begin{equation}
    \label{eq:42}
    \limsup_{\ell\to\infty} \lambda_\ell^1\leq \min(\nu_\infty^+,\nu_\infty^-)\,.
  \end{equation}
 We may assume w.l.o.g.~that $\nu_\infty^+=
 \min(\nu_\infty^+,\nu_\infty^-)$. Given $\varepsilon>0$ we  may find
 by \rlemma{lem:lim-semi} an $\ell_\varepsilon>1/\varepsilon$ such that
 $\tilde{\lambda}_{\ell_\varepsilon}^{1,+}\leq \nu_\infty^++\varepsilon$. Since $\lambda_{\ell/2}^1\leq \tilde{\lambda}_\ell^{1,+}$ by
 the definitions \eqref{reiligh} and \eqref{eq:35}, we
 easily deduce \eqref{eq:42}.\\
  {\rm (ii)} We now treat the case where \eqref{con} holds. Let $u_\ell$ denote the positive normalized minimizer in
  \eqref{reiligh}. Define $v_\ell(x)=\rho(x_1)u_\ell(x)$ where $\rho$
  is given by
\begin{equation}
\label{eq:43}
\rho(x_1)=
\begin{cases}
   0 & x_1\leq -1\,,\\
1+x_1& x_1\in (-1,0)\,,\\
   1 & x_1\geq 0\,.
   \end{cases}
\end{equation}
 By \eqref{mod} and \eqref{eq:43} we have
 \begin{equation}
   \label{eq:44}
   \int_{(-1,\ell)\times\omega} (A\nabla v_\ell).\nabla v_\ell\leq
   \int_{\Omega_\ell^+}(A\nabla u_\ell).\nabla u_\ell+C_A \int_{(-1,0)\times\omega}|\nabla v_\ell|^2\,.
 \end{equation}
 Define $w_{\ell+1}(x_1,X_2)=v_\ell(x_1+\ell,X_2)$ on
 $\Omega_{\ell+1}^-$ and notice that it is an admissible function for
 the infimum defining $\tilde{\lambda}_{\ell+1}^{1,-}$ (see
 \eqref{eq:35}). By \eqref{eq:44} and \eqref{eq:27}--\eqref{eq:32} we
 obtain, for some positive constant $C$,
 \begin{equation}
   \label{eq:45}
   \int_{\Omega_{\ell+1}^-} (A\nabla w_{\ell+1}).\nabla w_{\ell+1}\leq \int_{\Omega_\ell^+}(A\nabla u_\ell).\nabla u_\ell+C\alpha^\ell\,.
 \end{equation}
  Denote
  \begin{equation*}
    N_\ell^\pm=\int_{\Omega_\ell^\pm}(A\nabla u_\ell).\nabla
    u_\ell~\text{ and }~ D_\ell^\pm=\int_{\Omega_\ell^\pm} |u_\ell|^2\,,
  \end{equation*}
 so that in particular we have 
 \begin{equation}
\label{eq:47}
   N_\ell^++N_\ell^-=\lambda_\ell^1~\text{ and }~D_\ell^++D_\ell^-=1\,.
 \end{equation}
 By \eqref{eq:45} and an analogous construction on $\Omega_{\ell+1}^+$
 we have
 \begin{equation}
\label{eq:46}
   \tilde{\lambda}_{\ell+1}^{1,-}\leq
   \frac{N_\ell^++C\alpha^\ell}{D_\ell^+}~\text{ and }~\tilde{\lambda}_{\ell+1}^{1,+}\leq
   \frac{N_\ell^-+C\alpha^\ell}{D_\ell^-}\,.
 \end{equation}
 From \eqref{eq:46} and \eqref{eq:47} it follows that
 \begin{equation}
\label{eq:48}
   \min \{
   \tilde{\lambda}_{\ell+1}^{1,-},\tilde{\lambda}_{\ell+1}^{1,+}\}\leq
   D_\ell^+\tilde{\lambda}_{\ell+1}^{1,-}+D_\ell^-\tilde{\lambda}_{\ell+1}^{1,+}\leq \lambda_\ell^1+C\alpha^\ell\,.
 \end{equation}
 Passing to the limit $\ell\to\infty$ in \eqref{eq:48} and using \rlemma{lem:lim-semi} yields
 \begin{equation}
   \label{eq:49}
   \min(\nu_\infty^+,\nu_\infty^-)\leq \liminf_{\ell\to\infty} \lambda_\ell^1\,,
 \end{equation}
  which combined with \eqref{eq:42} clearly implies \eqref{eq:lim-lam}
  (when  \eqref{con} holds).\\
{\rm (iii)} Finally, we turn to the case where \eqref{con} doesn't
hold. In this case  we know already
from \rth{th1} that $\lambda_\ell^1=\mu^1$ for all $\ell$. The proof
of \eqref{eq:lim-lam} will be clearly completed if we show that 
 $ \nu_\infty^+=\nu_\infty^-=\mu^1$.
 We shall only show that $\nu_\infty^+=\mu^1$ as the argument for $\nu_\infty^-$ is
 identical. By \rlemma{lem:nui} we have $\nu_\infty^+\leq\mu^1$. 
 The reverse inequality is a special case of \rth{th:infinity}\,(ii), see below.
 \end{proof}
 The argument of the above proof can be used to derive an additional information that
 will be useful in the next section.
 \begin{proposition}
   \label{prop:side}
 If $\nu_\infty^+<\nu_\infty^-$ then $\lim_{\ell\to\infty}\int_{\Omega_\ell^+}|\nabla u_\ell|^2+|u_\ell|^2=0$.
 \end{proposition}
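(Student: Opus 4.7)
The key observation is that the proof of \rth{th:lim-lam} already produces two quantitative inequalities (namely \eqref{eq:46}) which, when combined with the normalization $D_\ell^++D_\ell^-=1$, should pin down how the $L^2$-mass of $u_\ell$ distributes between $\Omega_\ell^+$ and $\Omega_\ell^-$. I would not introduce any new construction; instead I would exploit this convex combination.

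First, adding the two inequalities from \eqref{eq:46} and using \eqref{eq:47} I get
$$\tilde\lambda_{\ell+1}^{1,-} D_\ell^+ + \tilde\lambda_{\ell+1}^{1,+} D_\ell^- \le \lambda_\ell^1 + 2C\alpha^\ell.$$
Since $D_\ell^-=1-D_\ell^+$ this rewrites as
$$(\tilde\lambda_{\ell+1}^{1,-} - \tilde\lambda_{\ell+1}^{1,+}) D_\ell^+ \le \lambda_\ell^1 - \tilde\lambda_{\ell+1}^{1,+} + 2C\alpha^\ell.$$
By \rlemma{lem:lim-semi} and \rth{th:lim-lam}, $\tilde\lambda_{\ell+1}^{1,+}\to\nu_\infty^+$, $\tilde\lambda_{\ell+1}^{1,-}\to\nu_\infty^-$, and $\lambda_\ell^1\to\min(\nu_\infty^+,\nu_\infty^-)=\nu_\infty^+$, so the RHS tends to $0$ while the coefficient tends to $\nu_\infty^- - \nu_\infty^+>0$. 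This forces $D_\ell^+=\int_{\Omega_\ell^+}u_\ell^2\to 0$, which takes care of the $L^2$ part of the claim. (Note: the assumption $\nu_\infty^+<\nu_\infty^-\le \mu^1$ places us in the case where \eqref{con} holds, by the last part of \rth{th1}, so the decay estimates of \rth{th:decay} that were used to derive \eqref{eq:46} are legitimately available.)

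For the gradient part I would use the second inequality of \eqref{eq:46} in the opposite direction: it gives
$$N_\ell^- \ge \tilde\lambda_{\ell+1}^{1,+} D_\ell^- - C\alpha^\ell,$$
and since $N_\ell^++N_\ell^-=\lambda_\ell^1$,
$$N_\ell^+ \le \lambda_\ell^1 - \tilde\lambda_{\ell+1}^{1,+}(1-D_\ell^+) + C\alpha^\ell.$$
As $\ell\to\infty$, each of $\lambda_\ell^1-\tilde\lambda_{\ell+1}^{1,+}$, $\tilde\lambda_{\ell+1}^{1,+}D_\ell^+$ and $\alpha^\ell$ tends to $0$, so $N_\ell^+\to 0$. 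The uniform ellipticity \eqref{e} then yields
$$\int_{\Omega_\ell^+}|\nabla u_\ell|^2 \le \frac{1}{\lambda_A} N_\ell^+ \to 0.$$
Combined with $D_\ell^+\to 0$ this gives the statement.

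The heart of the argument is the rearrangement that turns the mere inequality $\min(\tilde\lambda_{\ell+1}^{1,-},\tilde\lambda_{\ell+1}^{1,+})\le \lambda_\ell^1+o(1)$ (already used in the proof of \rth{th:lim-lam}) into a quantitative statement about $D_\ell^+$; I do not foresee any real obstacle once this convex-combination trick is set up, since everything else reduces to the already established convergences $\tilde\lambda_{\ell+1}^{1,\pm}\to\nu_\infty^\pm$ and $\lambda_\ell^1\to\nu_\infty^+$.
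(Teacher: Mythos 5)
Your proof is correct and follows essentially the same route as the paper: both use the pair of inequalities in \eqref{eq:46} together with the normalization $D_\ell^++D_\ell^-=1$ and the established limits $\tilde\lambda_{\ell+1}^{1,\pm}\to\nu_\infty^\pm$, $\lambda_\ell^1\to\nu_\infty^+$ to force first $D_\ell^+\to 0$ and then $N_\ell^+\to 0$. Your rearrangement isolating $D_\ell^+$ is a slightly more explicit way of extracting the conclusion than the paper's passage to the limit in \eqref{eq:48}, and your remark that $\nu_\infty^+<\nu_\infty^-$ forces \eqref{con} (hence the availability of the decay estimates behind \eqref{eq:46}) makes explicit something the paper leaves implicit.
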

 \begin{proof}
  We use the same notation as in the proof of \rth{th:lim-lam}. Passing to the limit $\ell\to\infty$ in \eqref{eq:48}, using
   \rlemma{lem:lim-semi} and \eqref{eq:lim-lam} yields
   \begin{equation*}
     \Big(\limsup_{\ell\to\infty}
     D_\ell^+\Big)\nu_\infty^-+\Big(1-\limsup_{\ell\to\infty}
     D_\ell^+)\nu_\infty^+\leq \lim_{\ell\to\infty} \lambda_\ell^1=\nu_\infty^+\,,
   \end{equation*}
 so necessarily $ \limsup_{\ell\to\infty} D_\ell^+=0$.
 Next, by \eqref{eq:46} we have for $\ell$ large,
 \begin{equation}
  \label{eq:70}
 \frac{N_\ell^+}{D_\ell^-}+\tilde{\lambda}_{\ell+1}^{1,+}-C\alpha^\ell\leq
    \frac{N_\ell^++N_\ell^-}{D_\ell^-}\leq \frac{N_\ell^++N_\ell^-}{D_\ell^++D_\ell^-}=\lambda_\ell^1\,.
 \end{equation}
 Since in our case, $\lim_{\ell\to\infty}
 \lambda_\ell^1=\lim_{\ell\to\infty}
 \tilde{\lambda}_{\ell+1}^{1,+}=\nu_\infty^+$, and we know already
 that $\lim_{\ell\to\infty}D_\ell^-=1$, 
 we deduce from \eqref{eq:70} that $\lim_{\ell\to\infty}N_\ell^+=0$.
 \end{proof}

\section{The problem on a semi-infinite cylinder}
\label{sec:cylinder} 
 In this section we further investigate the minimization problem
 \eqref{eq:33}. By Remark~\ref{rem:reflection} it is enough to
 consider $\nu_\infty^+$. There are two main questions we are interested
 in. First, we want to identify the conditions under which the infimum
 in \eqref{eq:33} is attained. Second, we would like to know when the
 inequality $\nu_\infty^+<\mu^1$ hold. The next proposition  shows that
 the two
 questions are closely related to each other.
 \begin{proposition}
   \label{prop:existence}
 If 
 \begin{equation}
   \label{eq:34}
   \nu_\infty^+<\mu^1 \,,
 \end{equation}
then $\nu_\infty^+$ is attained. The minimizer $\tilde u^+$ is
unique up to multiplication by a constant, has constant sign and satisfies 
 \begin{equation}
 \label{eq:37}
 \left\{
 \begin{aligned}
   -&\Div(A(X_2)\nabla \tilde u^+)=\nu_\infty^+\tilde u^+  \quad\text{ in }\Omega_\infty^+\,,\\
      &\tilde u^+=0 \text{ on } \gamma_\infty^{+}\,,\\
    & (A(X_2)\nabla \tilde u^+).\nu=0 \text{ on } \{0\}\times\omega\,.
 \end{aligned}
 \right.
 \end{equation}
 \end{proposition}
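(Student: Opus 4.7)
The plan is to apply the direct method, converting the spectral gap $\nu_\infty^+<\mu^1$ from \eqref{eq:34} into concentration-compactness via a partition-of-unity localization. I would take a minimizing sequence $\{u_n\}\subset V(\Omega_\infty^+)$ with $\int u_n^2=1$ and $\int A\nabla u_n.\nabla u_n\to\nu_\infty^+$. By~\eqref{e} the sequence is bounded in $H^1(\Omega_\infty^+)$, so along a subsequence $u_n\rightharpoonup\tilde u^+$ weakly in $H^1$ and strongly in $L^2_{\mathrm{loc}}$, with $\tilde u^+\in V(\Omega_\infty^+)$ by the trace theorem. The task then reduces to showing $\tilde u^+\not\equiv 0$ and that no $L^2$-mass is lost at $x_1=+\infty$.

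The main obstacle is the non-triviality of $\tilde u^+$. I would first establish the auxiliary estimate: for every $R>0$ and every $w\in H^1((R,\infty)\times\omega)$ vanishing on $\{R\}\times\omega\cup(R,\infty)\times\partial\omega$,
\begin{equation*}
\int A\nabla w.\nabla w\geq\mu^1\int w^2. \qquad(\star)
\end{equation*}
For $w$ compactly supported this follows from Chipot--Rougirel's inequality~\eqref{upplow}: $w$ extends by zero to $H^1_0$ of a finite sub-cylinder, whose first Dirichlet eigenvalue dominates $\mu^1$; density then extends $(\star)$ to all admissible $w$. Next I would take a smooth partition $\eta_R^2+\xi_R^2\equiv 1$ on $[0,\infty)$ with $\eta_R\equiv 1$ on $[0,R]$ and $\eta_R\equiv 0$ on $[R+1,\infty)$, so $\xi_R(R)=0$. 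Since $\eta_R\nabla\eta_R+\xi_R\nabla\xi_R=0$, one has the localization identity
\begin{equation*}
\int A\nabla u.\nabla u=\int A\nabla(\eta_R u).\nabla(\eta_R u)+\int A\nabla(\xi_R u).\nabla(\xi_R u)-\int u^2\bigl(A\nabla\eta_R.\nabla\eta_R+A\nabla\xi_R.\nabla\xi_R\bigr).
\end{equation*}
If $\tilde u^+\equiv 0$, then $u_n\to 0$ in $L^2_{\mathrm{loc}}$; the last term tends to zero (integrand supported in $[R,R+1]\times\omega$ and bounded by $C_A u_n^2$), $\int(\eta_R u_n)^2\to 0$, and $\int(\xi_R u_n)^2\to 1$. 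Applying $(\star)$ to $\xi_R u_n$ yields $\liminf\int A\nabla u_n.\nabla u_n\geq\mu^1$, contradicting \eqref{eq:34}.

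To prevent loss of mass I would write $u_n=\tilde u^+ + r_n$ with $r_n\rightharpoonup 0$. Weak $H^1$-convergence together with $L^2_{\mathrm{loc}}$-convergence gives the Brezis--Lieb-type decompositions
\begin{equation*}
\int u_n^2=\int(\tilde u^+)^2+\int r_n^2+o(1),\qquad\int A\nabla u_n.\nabla u_n=\int A\nabla\tilde u^+.\nabla\tilde u^++\int A\nabla r_n.\nabla r_n+o(1).
\end{equation*}
Set $a=\int(\tilde u^+)^2\in(0,1]$. If $a<1$, applying the argument of the previous paragraph to $r_n/\sqrt{1-a}$ (which again vanishes in $L^2_{\mathrm{loc}}$) yields $\liminf\int A\nabla r_n.\nabla r_n\geq\mu^1(1-a)$, while by the definition of $\nu_\infty^+$ applied to $\tilde u^+/\sqrt{a}$ we have $\int A\nabla\tilde u^+.\nabla\tilde u^+\geq a\nu_\infty^+$. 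Combining, $\nu_\infty^+\geq a\nu_\infty^++(1-a)\mu^1$, which forces $a=1$ since $\mu^1>\nu_\infty^+$. Thus $\tilde u^+$ is a minimizer.

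Finally, $|\tilde u^+|$ shares the Rayleigh quotient, so one may take $\tilde u^+\geq 0$; \eqref{eq:37} then holds weakly by the Euler--Lagrange computation, and the strong maximum principle applied on bounded subdomains gives $\tilde u^+>0$ in $\Omega_\infty^+$. Simplicity is then the standard argument: for two positive minimizers $\tilde u_1,\tilde u_2$, the combination $\tilde u_1-(\tilde u_1(x_0)/\tilde u_2(x_0))\tilde u_2$ is another solution of the eigenvalue equation vanishing at an interior point $x_0$, hence identically zero by Harnack.
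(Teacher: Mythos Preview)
Your argument is correct and follows a genuinely different route from the paper's. The paper does not work with an arbitrary minimizing sequence but with the specific sequence $\{\tilde u_\ell^+\}$ of first eigenfunctions on the truncated cylinders $\Omega_\ell^+$ (see \rrem{rem:uellt}); it then proves exponential decay of $\tilde u_\ell^+$ away from $x_1=0$ by the same iterative cutoff technique as in \rth{th:decay}, exploiting the gap $\sigma_{\ell/2}^1-\tilde\lambda_\ell^{1,+}\geq\tilde\beta>0$, and obtains the concentration estimate $\int_{\Omega_r^+}(\tilde u^+)^2\geq 1-\tilde\alpha^{[r]}$ directly. Your approach via the IMS localization identity and a Brezis--Lieb splitting is closer in spirit to Lions' concentration-compactness and is more self-contained, needing only the elementary tail bound~$(\star)$ coming from \eqref{upplow}. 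The paper's route buys more, however: it gives a quantitative exponential decay of the minimizer and the convergence of the distinguished approximating sequence $\tilde u_\ell^+\to\tilde u^+$, both of which are reused in \rth{th:descr}. One small point: in your simplicity argument, Harnack applies only to nonnegative solutions, so one should pass to the absolute value $|\tilde u_1-c\tilde u_2|$ (which is again a minimizer, hence a nonnegative weak solution) before invoking it.
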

\begin{proof}
The existence of a minimizer will be achieved by taking the limit
$\ell\to\infty$ of the minimizers $\{\tilde u_\ell^+\}$ in \eqref{eq:35}
(see \rrem{rem:uellt}). Since $\{\tilde u_\ell^+\}$ is bounded
in $H^1(\Omega_\infty^+)$, a subsequence $\{\tilde u_{\ell_k}^+\}$ converges
 weakly to some limit $\tilde u^+\in H^1(\Omega_\infty^+)$. Take any $\varphi\in
V_s(\Omega_\infty^+)$. Since
 $\nu_\infty^+=\lim_{k\to\infty} {\tilde\lambda}_{\ell_k}^{1,+}$ by
 \rlemma{lem:lim-semi}, we can pass to the limit in the following
 equality, that holds for $\ell_k>M(\varphi)$ (see \eqref{eq:41}),
 \begin{equation*}
   \int_{\Omega_\infty^+} A\nabla \tilde u_{\ell_k}^+\cdot\nabla\varphi= 
 {\tilde\lambda}_{\ell_k}^{1,+}\int_{\Omega_\infty^+} \tilde u_{\ell_k}^+\varphi\,,
 \end{equation*}
and obtain that
\begin{equation}
\label{eq:58}
   \int_{\Omega_\infty^+} A\nabla \tilde u^+\cdot \nabla\varphi= 
 \nu_\infty^+ \int_{\Omega_\infty^+} \tilde u^+\varphi\,.
 \end{equation}
 Since \eqref{eq:58} is valid for any $\varphi\in
 V_s(\Omega_\infty^+)$, and by density also for any $\varphi\in
 V(\Omega_\infty^+)$, we obtain that $\tilde u^+$ is a solution of
 \eqref{eq:37}. To conclude that it is a minimizer realizing
 $\nu_\infty^+$ in \eqref{eq:33} we only need to prove that it is {\em
   nontrivial}, i.e., that $\tilde u^+\not\equiv 0$. Actually, we are
 going to show that $\int_ {\Omega_\infty^+} (\tilde u^+)^2=1$ and $\tilde
 u^+>0$. For that matter we will prove decay estimates for $\tilde
 u_\ell^+$ for large $x_1$, that imply concentration near $x_1=0$, using
 the same technique as the one used in the proof of \rth{th:decay}.

Let $\ell$ and $\ell'$ satisfy  $0< \ell^{'} \leq \ell-1$. Define $\tilde\rho_{\ell^{'}}=\tilde\rho_{\ell^{'}}(x_1)$ by
\begin{equation*}
\tilde\rho_{\ell^{'}}(x_1)=
\begin{cases}
   0 & x_1\leq \ell'\,,\\
x_1-\ell'& x_1\in (\ell',\ell'+1)\,,\\
   1 & x_1\geq \ell'+1\,.
   \end{cases}
\end{equation*}
 By the Euler-Lagrange equation satisfied by $\tilde u_\ell^+$ we have
\begin{equation*}
\int_{\Omega^+_\ell} (A\nabla \tilde u_\ell^+). \nabla
(\tilde\rho_{\ell^{'}}^2\tilde u_\ell^+) = \tilde\lambda_\ell^{1,+}
\int_{\Omega^+_\ell}\tilde\rho_{\ell^{'}}^2|\tilde u_\ell^+|^2\,.
\end{equation*}
Repeating the argument used to derive \eqref{zero1} we obtain
\begin{equation}
\label{eq:59}
\begin{aligned}
(\sigma_{\ell/2}^1 -\tilde\lambda_\ell^{1,+}) \int_{\Omega_\ell^+\setminus \Omega_{\ell'+1}}\!\!\!|\tilde u_\ell^+|^2 &\leq (\sigma_{\ell/2}^1 -\tilde\lambda_\ell^{1,+}) \int_{\Omega_\ell^+} |\tilde u_\ell^+|^2 \tilde\rho_{\ell^{'}}^2 \leq
\int_{\Omega_\ell^+}|\tilde u_\ell^+|^2 (A\nabla
\tilde\rho_{\ell^{'}}).\nabla \tilde\rho_{\ell^{'}}\\&=
\int_{\Omega_{\ell'+1}^+\setminus \Omega_{\ell'}}\!\!|\tilde u_\ell^+|^2 (A\nabla
\tilde\rho_{\ell^{'}}).\nabla \tilde\rho_{\ell^{'}}
\leq C_A  \int_{\Omega_{\ell'+1}^+\setminus \Omega_{\ell'}}\!\!\!|\tilde u_\ell^+|^2\,.
\end{aligned}
\end{equation}
Using \eqref{upplow}  together with \eqref{eq:34} and
\rlemma{lem:lim-semi} we deduce that there exist
$\tilde\ell_0>0$ and $\tilde \beta>0$ such that for
$\ell>\tilde\ell_0$ we have $\sigma_{\ell/2}^1
-\tilde\lambda_\ell^{1,+}\geq\tilde\beta$. Therefore, we deduce from
\eqref{eq:59} that
\begin{equation}
  \label{eq:60}
  \int_{\Omega_\ell^+\setminus \Omega_{\ell'+1}}
  |\tilde u_\ell^+|^2\leq \tilde\alpha \int_{\Omega_\ell^+\setminus
    \Omega_{\ell'}}\!\!\!|\tilde u_\ell^+|^2~\text{ with }~\tilde\alpha:=\frac{C_A}{\tilde\beta+C_A}\,.
\end{equation}
 Fix any $r>1$. Applying \eqref{eq:60} successively for $\ell^{'} = r-1, r-2,\ldots,r-[r]$ yields
\begin{equation*}
\int_{\Omega_\ell^+\setminus\Omega_r} |\tilde u_\ell^+|^2 \leq
\tilde\alpha^{[r]}\int_{\Omega^+_\ell}|\tilde u_\ell^+|^2  =\tilde\alpha^{[r]},~\forall\ell>r\,.
\end{equation*}
In other words,
\begin{equation}
  \label{eq:61}
  \int_{\Omega_r^+} |\tilde u_\ell^+|^2\geq 1-\tilde\alpha^{[r]}\,.
\end{equation}
 Since $\tilde u_{\ell_k}\to\tilde u^+$ strongly in
 $L^2(\Omega_r^+)$, we deduce from \eqref{eq:61} that 
 \begin{equation}
   \label{eq:62}
  \int_{\Omega_r^+} (\tilde u^+)^2\geq 1-\tilde\alpha^{[r]}\,.
\end{equation}
 This already implies that $\tilde u^+$ is a nontrivial nonnegative solution to
 \eqref{eq:37} and therefore, a minimizer in \eqref{eq:33}. Applying
 \eqref{eq:62} with arbitrary large $r$, we get that
 $\int_{\Omega_\infty^+} (\tilde u^+)^2=1$. The uniqueness of the minimizer
 follows by a standard argument, using the fact that any minimizer
 must have a constant sign.
\end{proof}
\noindent\underline{Open Problem:}
Is it true that \eqref{eq:34} is  also a necessary condition for
 the existence of a minimizer realizing $\nu_\infty^+$? In \rth{th:infinity} below we will show
 nonexistence of a minimizer when $\nu_\infty^+=\mu^1$, but under the
 additional condition \eqref{eq:53}.\\[2mm]
 The next result provides a
 sufficient condition for \eqref{eq:34} to hold and another one for it
 to fail. 
\begin{theorem}
  \label{th:infinity}
{\rm (i)} Assume that \eqref{con} is satisfied. If the following condition holds,
\begin{equation}
  \label{eq:39}
  \int_\omega (A_{12}.\nabla W_1)W_1\geq 0\,,
\end{equation}
 then \eqref{eq:34} holds. \\
{\rm (ii)} If 
 \begin{equation}
   \label{eq:53}
 A_{12}.\nabla W_1\leq 0\text{ a.e. in }\omega
 \end{equation}
 then $\nu_\infty^+=\mu^1$. Moreover, in this case there is no
 minimizer realizing $\nu_\infty^+$.
\end{theorem}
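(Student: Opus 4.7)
The plan is to derive a master identity for the functional $v\mapsto \int A\nabla v\cdot\nabla v-\mu^1\int v^2$ via the substitution $w=v/W_1$, and then use it with targeted test functions to handle both parts. For $v\in V(\Omega_\infty^+)$ sufficiently smooth and vanishing for large $x_1$, setting $w=v/W_1$ and expanding the gradient produces, after integration in $x_1$ of the mixed term $W_1(A_{12}\cdot\nabla W_1)(w^2)_{x_1}$ and use of the eigenvalue equation for $W_1$ (tested against $w^2 W_1\in H_0^1(\omega)$ at each fixed $x_1$), the key identity
\begin{equation}\label{eq:master}
\int_{\Omega_\infty^+} A\nabla v\cdot\nabla v -\mu^1\int_{\Omega_\infty^+}v^2 = \int_{\Omega_\infty^+}W_1^2\,A\nabla w\cdot\nabla w - \int_\omega W_1(A_{12}\cdot\nabla W_1)\,w(0,X_2)^2\,dX_2\,.
\end{equation}

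Part (ii) then follows at once from \eqref{eq:master}. Under \eqref{eq:53} combined with $W_1>0$ in $\omega$ and the ellipticity of $A$, both terms on the right of \eqref{eq:master} are nonnegative, so $\int A\nabla v\cdot\nabla v\ge \mu^1\int v^2$ for every admissible $v$ by density, and combined with \rlemma{lem:nui} this yields $\nu_\infty^+=\mu^1$. For nonexistence, a minimizer would force both terms on the right of \eqref{eq:master} to vanish; the first forces $\nabla w\equiv 0$ (using positive-definiteness of $A$ and positivity of $W_1$ in the interior), hence $w\equiv c$ is a constant, and so the minimizer would equal $cW_1(X_2)$; but for $c\ne 0$ one has $\int_{\Omega_\infty^+}(cW_1)^2=c^2\int_\omega W_1^2\cdot\int_0^\infty dx_1=\infty$, a contradiction.

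For part (i), I would exploit \eqref{eq:master} with two shapes of test function depending on $C:=\int_\omega W_1(A_{12}\cdot\nabla W_1)$. If $C>0$ strictly, take $w(x_1,X_2)=e^{-\varepsilon x_1}$ constant in $X_2$; the right-hand side of \eqref{eq:master} reduces to $\tfrac{\varepsilon}{2}\int_\omega a_{11}W_1^2-C$, which is negative for small $\varepsilon>0$. If $C=0$ while \eqref{con} holds, then $W_1(A_{12}\cdot\nabla W_1)\not\equiv 0$ a.e., so we may pick $g\in C^\infty(\overline{\omega})$ with $M:=\int_\omega W_1(A_{12}\cdot\nabla W_1)\,g>0$ (possibly after flipping sign). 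With a smooth cutoff $\eta\in C^\infty([0,\infty))$ satisfying $\eta(0)=1$ and $\eta\equiv 0$ on $[L,\infty)$, I would test with
\[
w(x_1,X_2)=e^{-\varepsilon x_1}+t\,\eta(x_1)\,g(X_2)\,.
\]
The boundary integral in \eqref{eq:master} expands to $2tM+t^2\int_\omega W_1(A_{12}\cdot\nabla W_1)\,g^2$, while the bulk integral expands to $\tfrac{\varepsilon}{2}\int_\omega a_{11}W_1^2 + O(\varepsilon t) + t^2 K$ with $K$ a finite constant depending only on $\eta$ and $g$. Fixing $t>0$ so small that the collected $t^2$-terms are dominated by the linear gain $-2tM$, and then sending $\varepsilon\to 0^+$, makes the right-hand side of \eqref{eq:master} strictly negative, proving $\nu_\infty^+<\mu^1$.

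The principal technical obstacle is the rigorous derivation of \eqref{eq:master}: the substitution $w=v/W_1$ must be justified even though $W_1$ vanishes on $\partial\omega$, and the integration in $x_1$ on the semi-infinite interval requires appropriate decay of $v$. I would handle both by first establishing \eqref{eq:master} for $v\in C^\infty_c$ vanishing in a neighbourhood of $\gamma_\infty^+$ and then extending by density in $V(\Omega_\infty^+)$. A secondary subtlety in Case B of part (i) is that the compactly-supported piece introduces a fixed positive $O(t^2)$ bulk contribution which could a priori dominate the linear gain $-2tM$ from the boundary; this is precisely why $t$ must be chosen small before $\varepsilon$.
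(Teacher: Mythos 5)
Your master identity \eqref{eq:master} is exactly the paper's integrated Picone identity: in the paper's (6.12) the term $A\big(\nabla u-\frac{u}{W_1}\nabla W_1\big).\big(\nabla u-\frac{u}{W_1}\nabla W_1\big)$ equals $W_1^2\,A\nabla w.\nabla w$ with $w=u/W_1$, and the boundary term $\int_\omega (A_{12}.\nabla W_1)\,u^2(0,X_2)/W_1$ equals $\int_\omega W_1(A_{12}.\nabla W_1)\,w(0,X_2)^2$. So your Part (ii) is essentially the paper's own argument, including the same regularity/density caveat (and the same slight gap in the nonexistence step, where one must justify that the Picone remainder vanishes for a minimizer that is only known a priori to lie in $V(\Omega_\infty^+)$).

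For Part (i), however, you take a genuinely different route. The paper constructs the test function $z_\alpha$ by gluing a dimension-reduction profile $\tilde v_{\ell_1}^{\varepsilon_1}$ (inherited from Theorem~\ref{th:lim-zero} and Remark~\ref{rem:Ww}) to the exponential tail $W_1 e^{-\alpha x_1}$; the strict gain comes from the prepared quantity $\gamma_1>0$ in \eqref{eq:54}, and this requires carrying the full $\ell\to 0$ machinery with the mollifications $\tilde G_\varepsilon$. You instead reuse the Picone identity itself with test functions $v=W_1\big(e^{-\varepsilon x_1}+t\eta(x_1)g(X_2)\big)$: when $\int_\omega(A_{12}.\nabla W_1)W_1>0$ the exponential tail alone produces the gain (this is the content of Lemma~\ref{lem:nui} with a closer look at the $O(\varepsilon)$ term), and when the integral vanishes you extract a first-order boundary gain $-2tM$ from a smooth $g$ paired against $W_1(A_{12}.\nabla W_1)$, choosing $t$ small first so the quadratic bulk cost is subordinate and then $\varepsilon$ small. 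This is correct and self-contained: it needs only the Picone identity, which Part (ii) already requires, and avoids the mollification/dimension-reduction apparatus. The one point worth making explicit is that you do not need a density argument to apply \eqref{eq:master} to your chosen $v$: it is an explicit finite product $W_1\cdot(\text{smooth bounded }w)$, so both sides can be computed directly, exactly as the paper does for $z_\alpha$.
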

\begin{proof}
(i) Assume that \eqref{eq:39} is satisfied.  A similar computation to the one done in the proof of
 \rth{th:lim-zero} (see also Remark~\ref{rem:Ww})  shows that $\{\tilde
 v_\ell^\varepsilon\}$ given by \eqref{eq:22}, satisfy not only
 \eqref{eq:23}, but also 
\begin{equation*}
\inf_{\varepsilon>0} \lim_{\ell\to0}\frac{\int_{\Omega_\ell^-} (A\nabla \tilde v_\ell^\varepsilon).\nabla\tilde
  v_\ell^\varepsilon}{\int_{\Omega_\ell^-}|\tilde v_\ell^\varepsilon|^2}=\int_{\omega}(A_{22}\nabla  W_1).\nabla 
  W_1-\frac{|A_{12}\nabla  W_1|^2}{a_{11}}\,.
 \end{equation*}
 Indeed, we only need to note that the term corresponding to the
 second term
 on the RHS of \eqref{eq:69} is of the order $O(\ell^2)$. Hence, we can fix values of
$\ell_1$ and $\varepsilon_1$ such that the  following analog of
\eqref{eq:19} holds,
\begin{equation}
  \label{eq:54}
-\gamma_1:=\int_{\Omega_{\ell_1}^-} (A\nabla\tilde
v_{\ell_1}^{\varepsilon_1}).\nabla\tilde
v_{\ell_1}^{\varepsilon_1}-\mu^1\int_{\Omega_{\ell_1}^-}|\tilde v_{\ell_1}^{\varepsilon_1}|^2<0\,.
\end{equation}
For each $\alpha>0$ we define a  test function in
$V_\infty(\Omega_\infty^+)$ by
  \begin{equation*}
    z_\alpha(x_1,X_2)=
    \begin{cases}
       \tilde v_{\ell_1}^{\varepsilon_1}(x_1-\ell_1,X_2) & x_1\in [0,\ell_1)\,,\\
       W_1(X_2)e^{-\alpha (x_1-\ell_1)} & x_1 \in[\ell_1,\infty)\,.
    \end{cases}
  \end{equation*}
Above we used the fact that  $\tilde v_{\ell_1}^{\varepsilon_1}(0,X_2)=W_1(X_2)$.  We have,
\begin{equation*}
  \int_{\Omega_\infty^+}|z_\alpha|^2=\int_{\Omega_{\ell_1}^-} |\tilde v_{\ell_1}^{\varepsilon_1}|^2+ (\int_0^\infty\!\!
  e^{-2\alpha x_1})\int_\omega W_1^2=\int_{\Omega_{\ell_1}^-}
  |\tilde v_{\ell_1}^{\varepsilon_1}|^2+\frac{1}{2\alpha}\,,
\end{equation*}
and 
 \begin{equation*}
\begin{aligned}
   \int_{\Omega_\infty^+} (A\nabla z_\alpha).\nabla z_\alpha&=
\int_{\Omega_{\ell_1}^-} (A\nabla \tilde v_{\ell_1}^{\varepsilon_1}).\nabla
\tilde v_{\ell_1}^{\varepsilon_1}\\
&\phantom{=}+ \frac{1}{2\alpha}\left(\alpha^2\int_\omega
  a_{11}W_1^2-2\alpha \int_\omega (A_{12}.\nabla
  W_1)W_1+\int_\omega (A_{22} \nabla W_1).\nabla W_1\right)
\end{aligned}
 \end{equation*}
  Therefore, using \eqref{eq:54} we get
\begin{equation}
\label{eq:57}
\nu_\infty^+-\mu^1\leq \frac{ \int_{\Omega_\infty^+} A\nabla z_\alpha.\nabla
  z_\alpha}{\int_{\Omega_\infty^+}|z_\alpha|^2}- \mu^1
<\frac{ \frac{\alpha}{2}\int_\omega
  a_{11}W_1^2-\int_\omega (A_{12}.\nabla
  W_1)W_1 -\gamma_1}          {\int_{\Omega_{\ell_1}^-}
  |v_{\ell_1}^{\varepsilon_1}|^2+\frac{1}{2\alpha}}\,.
\end{equation}
 Since $\gamma_1>0$ and $\int_\omega (A_{12}.\nabla
  W_1)W_1\geq 0$ by \eqref{eq:39}, it is clear that we can choose
  $\alpha$ small enough to ensure that the RHS of \eqref{eq:57} is
  negative, completing the proof of \eqref{eq:34}.\\
(ii) We notice that not only $V_s(\Omega_+^\infty)$ is dense in
$V(\Omega_+^\infty)$ (see \eqref{eq:41}), but its subspace
\begin{equation*}
V_s^0(\Omega_{\infty}^+) =  \Big\{ u\in V_s(\Omega_\infty^+)\,:\,
\exists  \delta=\delta(u) > 0 \text{ s.t. } u(x) = 0 \text{ for }
\text{dist}(x,\gamma_\infty^+)\leq \delta \Big\}\,,
\end{equation*}
is dense as well. By elliptic regularity and the strong maximum principle we
know that $W_1$ is continuous and positive in $\omega$ (see
\cite[Chapter 8]{gt}). We shall use
the following version of Picone identity,
\begin{equation}
  \label{eq:55}
  (A\nabla u).\nabla u-(A\nabla 
  v).\nabla\big(\frac{u^2}{v}\big)=A\big(\nabla u-\frac{u}{v}\nabla
  v\big).\big(\nabla u-\frac{u}{v}\nabla v\big)\geq 0\,.
\end{equation}
 Using \eqref{eq:55} with any $u\in V_s^0(\Omega_{\infty}^+)$ and
 $v=W_1$, integrating and applying the generalized Green formula yields
 \begin{equation}
   \label{eq:64}
\begin{aligned}
   0&\leq\int_{\Omega_{\infty}^+}  A\big(\nabla u-\frac{u}{W_1}\nabla
  W_1\big).\big(\nabla u-\frac{u}{W_1}\nabla W_1\big)=
\int_{\Omega_{\infty}^+} (A\nabla u).\nabla u-(A\nabla
   W_1).\nabla\big(\frac{u^2}{W_1}\big)\\&=
 \int_{\Omega_{\infty}^+} (A\nabla u).\nabla u
+\int_{\Omega_{\infty}^+}
 \Div(A\nabla W_1)\big(\frac{u^2}{W_1}\big)-\int_{\{0\}\times\omega}
 (A\nabla W_1.\nu)\big(\frac{u^2}{W_1}\big)\\
&=\int_{\Omega_{\infty}^+} (A\nabla u).\nabla
u-\mu^1u^2+\int_{\omega} \Big(A_{12}.\nabla W_1\Big)\frac{u^2(0,X_2)}{W_1(X_2)}\,.
\end{aligned} 
\end{equation}
By \eqref{eq:64} and \eqref{eq:53} we deduce that 
\begin{equation}
  \label{eq:65}
  0\leq \int_{\Omega_{\infty}^+}  A\big(\nabla u-\frac{u}{W_1}\nabla
  W_1\big).\big(\nabla u-\frac{u}{W_1}\nabla W_1\big)\leq \int_{\Omega_{\infty}^+} A\nabla u.\nabla u-\mu^1u^2\,.
\end{equation}
 By the density of $V_s^0(\Omega_{\infty}^+)$ in
 $V(\Omega_{\infty}^+)$ it follows that \eqref{eq:65} holds for every
 $u\in  V(\Omega_{\infty}^+)$, i.e., $\nu_\infty^+\geq
 \mu^1$. Finally, applying \eqref{eq:63} we get that $\nu_\infty^+=
 \mu^1$. To conclude, assume by negation that $\nu_\infty^+$ is
 realized by a minimizer $u$. Then, by \eqref{eq:65} we get that
 $\nabla\big(\frac{u}{W_1}\big)=0$ a.e., implying that $u=cW_1$ for
 some constant $c\neq 0$. But this is clearly a contradiction since
 $W_1\not\in V(\Omega_\infty^+)$.
\end{proof}
\begin{remark}
   An immediate consequence of \rth{th:infinity} and
   \rrem{rem:reflection} is that if \eqref{con} holds and
   $\int_\omega (A_{12}.\nabla W_1)W_1= 0$, then we have both
   $\nu_\infty^+<\mu^1$ and $\nu_\infty^-<\mu^1$. A special case is
   when property (S) holds. Another direct consequence is that
   whenever \eqref{con} holds we have
   $\min(\nu_\infty^+,\nu_\infty^-)<\mu^1$. However, this fact follows
   already from our previous results,  by combining \rth{th1} and \rth{th:lim-lam}.
\end{remark}
 Our last result provides a description of the asymptotic profile of the
 eigenfunctions $\{u_\ell\}$ near the ends of the cylinder. We denote
 by $\tilde u^\pm$ the unique positive renormalized minimizer for
 $\nu_\infty^\pm$, when it exists. For each
 $\ell>0$ we define:
 \begin{equation}
   \label{eq:66}
\begin{aligned}
   \tilde v_\ell^+(x_1,X_2)=u_\ell(x_1-\ell,X_2)~\text{ on
   }~\Omega_\ell^+\,,\\
 \tilde v_\ell^-(x_1,X_2)=u_\ell(x_1+\ell,X_2)~\text{ on
   }~\Omega_\ell^-\,.
\end{aligned}
 \end{equation}
 The next theorem describes two possible scenarios that may occur:
 concentration near one of the ends of the cylinder, or concentration
 near both ends.
 \begin{theorem}
\label{th:descr}
   {\rm(i)} If $\nu_\infty^+<\nu_\infty^-$ then, for every $r>0$,
   \begin{equation}
     \label{eq:67}
     \tilde v_\ell^+\to \tilde u^+ \text{ in }H^1(\Omega_r^+) ~\text{
       and }~ \tilde v_\ell^-\to 0 \text{ in }H^1(\Omega_r^-)\,.
   \end{equation}
{\rm(ii)} If both \eqref{eq:37} and property  (S) hold then we have $\tilde
v_\ell^+(x_1,X_2)=\tilde v_\ell^-(-x_1,-X_2)$ and for every $r>0$,
\begin{equation}
  \label{eq:68}
  \tilde v_\ell^+\to \tilde u^+ \text{ in }H^1(\Omega_r^+) ~\text{
       and }~ \tilde v_\ell^-\to \tilde u^-\text{ in }H^1(\Omega_r^-)\,.
\end{equation}
 \end{theorem}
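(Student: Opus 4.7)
The plan is to handle both parts by a standard concentration-compactness argument: extract a weak $H^1_{\mathrm{loc}}$ limit of the translated functions $\tilde u_\ell^\pm$, identify it via the semi-infinite eigenvalue problem \eqref{eq:37}, and then promote weak convergence to strong $H^1(\Omega_r^\pm)$ convergence by matching the energies. The uniqueness part of \rprop{prop:existence} forces the whole sequence to converge.

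For part~(i), the convergence $\tilde u_\ell^-\to 0$ in $H^1(\Omega_r^-)$ is immediate, because after the change of variable $x_1\mapsto x_1+\ell$,
\begin{equation*}
\|\tilde u_\ell^-\|_{H^1(\Omega_r^-)}^2=\int_{(\ell-r,\ell)\times\omega}\!u_\ell^2+|\nabla u_\ell|^2 \leq \int_{\Omega_\ell^+}u_\ell^2+|\nabla u_\ell|^2\longrightarrow 0
\end{equation*}
by \rprop{prop:side}. For $\tilde u_\ell^+$ I would extract a subsequence converging weakly in $H^1_{\mathrm{loc}}(\Omega_\infty^+)$ and strongly in $L^2_{\mathrm{loc}}(\Omega_\infty^+)$ to some $v\geq 0$ (the local $H^1$ bound follows from $\int u_\ell^2=1$ together with $\int A\nabla u_\ell.\nabla u_\ell=\lambda_\ell^1\leq \mu^1$, so $\int|\nabla u_\ell|^2\leq\mu^1/\lambda_A$). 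Testing the weak form of \eqref{eigen1} against $\varphi\in V_s(\Omega_\infty^+)$ (which is allowed once $\ell$ exceeds the support length of $\varphi$ because $\tilde u_\ell^+$ satisfies the translated Neumann condition at $x_1=0$) and using \rth{th:lim-lam} to identify $\lim\lambda_\ell^1=\nu_\infty^+$ shows that $v$ solves \eqref{eq:37}. To exhibit $v\not\equiv 0$ with unit $L^2$ mass, I combine \rth{th:decay} (applied with $r$ near $\ell$) with \rprop{prop:side} to get
\begin{equation*}
\int_{\Omega_R^+}(\tilde u_\ell^+)^2=\int_{[-\ell,-\ell+R)\times\omega}u_\ell^2\geq 1-\alpha^{[R]}-o(1),
\end{equation*}
so letting first $\ell\to\infty$ and then $R\to\infty$ gives $\int_{\Omega_\infty^+}v^2=1$, hence $v=\tilde u^+$ by the uniqueness in \rprop{prop:existence}. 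The strong $H^1(\Omega_r^+)$ upgrade comes from the energy identity
\begin{equation*}
\int_{\Omega_\ell^+}\!A\nabla\tilde u_\ell^+.\nabla\tilde u_\ell^+=\lambda_\ell^1-\int_{\Omega_\ell^+}\!A\nabla u_\ell.\nabla u_\ell\longrightarrow \nu_\infty^+=\int_{\Omega_\infty^+}\!A\nabla\tilde u^+.\nabla\tilde u^+,
\end{equation*}
which, together with weak lower semicontinuity applied separately on $\Omega_r^+$ and on its complement, forces equality of energies on each $\Omega_r^+$; uniform ellipticity \eqref{e} then turns weak into strong $H^1$ convergence.

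For part~(ii), property~(S) together with \rrem{rem:reflection} gives $\nu_\infty^+=\nu_\infty^-$, and applying the transformation $v(x_1,X_2)\mapsto v(-x_1,-X_2)$ to a minimizer in \eqref{eq:33} (combined with uniqueness) produces $\tilde u^+(x_1,X_2)=\tilde u^-(-x_1,-X_2)$. By \rprop{symmetry} the same transformation preserves $u_\ell$, whence $\int_{\Omega_\ell^\pm}u_\ell^2=1/2$ and the analogous equipartition of gradient energies. The compactness/uniqueness scheme of part~(i) is then run separately on each side to identify the subsequential weak limits of $\tilde u_\ell^\pm$ as multiples of $\tilde u^\pm$, with the multiplicative constant (the renormalization factor $\sqrt 2$) fixed by the equipartition. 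The main obstacle relative to part~(i) is precisely the absence of a one-sided statement like \rprop{prop:side}: the nontriviality of the limit on each side can no longer be read off from a ``zero-side''; one has to combine \rth{th:decay} with the symmetric mass equipartition, and use the hypothesis that \eqref{eq:37} is solvable (i.e.\ $\nu_\infty^+<\mu^1$), to guarantee that the mass on each half actually accumulates near the corresponding end rather than in the bulk. Once this is settled, the strong $H^1$ upgrade on each side repeats the energy-matching argument from part~(i) verbatim.
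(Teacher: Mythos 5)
Your proof is correct and follows essentially the same scheme as the paper's: extract a weak $H^1_{\mathrm{loc}}$ / strong $L^2_{\mathrm{loc}}$ limit of $\tilde u_\ell^+$, use the decay estimate of \rth{th:decay} together with \rprop{prop:side} to show the limit carries full $L^2$-mass, identify it by uniqueness, and upgrade to strong $H^1(\Omega_r^+)$ convergence by matching energies. The one genuine difference lies in the identification step. You pass to the limit in the weak form of the translated equation against test functions in $V_s(\Omega_\infty^+)$ to show that the weak limit solves \eqref{eq:37}, and then invoke the uniqueness of \rprop{prop:existence}; the paper instead bypasses the PDE entirely and works at the level of the Rayleigh quotient (their chain \eqref{eq:72}), showing directly that $\int_{\Omega_\infty^+}(v^+)^2=1$ and $\int_{\Omega_\infty^+}A\nabla v^+\cdot\nabla v^+\leq\nu_\infty^+$, so $v^+$ is a normalized minimizer. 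Both routes are valid, with the paper's being slightly shorter since it avoids the detour through the Euler--Lagrange equation. Your remark on the $\sqrt{2}$ factor in part~(ii) is not a defect of your argument but a genuine subtlety the paper glosses over: under property (S), each of $\tilde u_\ell^\pm$ carries $L^2$-mass $1/2$, so with the normalization $\int(\tilde u^\pm)^2=1$ established in \rprop{prop:existence} the limits are $\tilde u^\pm/\sqrt 2$; the paper's phrase ``renormalized minimizer'' before \rth{th:descr} is presumably intended to absorb exactly this factor, but the paper's one-line proof of (ii) (``the same argument as in (i)'') never addresses it.
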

 \begin{proof}
  (i)   The convergence of $\{ \tilde v_\ell^-\}$ to $0$ in
  $H^1(\Omega_r^-)$ for all $r>0$ is clear from \rprop{prop:side}, so
  we only need to prove the result for $\{ \tilde v_\ell^+\}$. Since
  $\{ \tilde v_\ell^+\}$ is bounded in $H^1(\Omega_\ell^+)$, given any sequence $\ell_k\to\infty$, we can apply a
  diagonal argument to  $\{ \tilde v_{\ell_k}^+\}$ to extract a subsequence, still denoted by $\{\ell_k\}$, 
  such that $\tilde v_{\ell_k}^+$ converges weakly
  in $H^1(\Omega_r^+)$ and strongly in $L^2(\Omega_r^+)$ to some
  function $v^+\in H^1(\Omega_\infty^+)$, for
  every $r>0$. By \eqref{eq:27} and \rprop{prop:side} we have
  \begin{equation}
    \label{eq:56}
    \int_{\Omega_r^+} |\tilde v_\ell^+|^2=\int_{\Omega_\ell^-\setminus
      \Omega_{\ell-r}}\!\!|u_\ell|^2=1-\int_{\Omega_{l-r}^-}\!
      |u_\ell|^2-\int_{\Omega_\ell^+} \!|u_\ell|^2
               \geq 1-\alpha^{[r]}+o(1)\,,
  \end{equation}
 where $o(1)$ stands for a quantity that tends to $0$ when $\ell\to\infty$.
 Passing to the limit in \eqref{eq:56} with $\ell=\ell_k$, yields,
 \begin{equation}
   \label{eq:71}
   \int_{\Omega_r^+} |v^+|^2\geq 1-\alpha^{[r]}\,,
 \end{equation}
 and since $r$ is arbitrary, we get that
 $\int_{\Omega_\infty^+}|v^+|^2=1$.  In addition, we clearly have 
 \begin{multline}
\label{eq:72}
   \nu_\infty^+=\lim_{\ell \to\infty} \lambda_\ell^1\geq \lim_{\ell\to\infty} \int_{\Omega_\ell}
   (A\nabla u_\ell).\nabla u_\ell \\\geq
   \limsup_{k\to\infty}  \int_{\Omega_r^+} (A\nabla\tilde
   v_{\ell_k}^+).\nabla\tilde v_{\ell_k}^+\geq
\int_{\Omega_r^+} (A\nabla v^+).\nabla v^+\,.
 \end{multline}
From \eqref{eq:71}--\eqref{eq:72} we deduce that $\int_{\Omega_\infty^+}
(A\nabla v^+).\nabla v^+=\nu_\infty^+$, i.e., $v^+$ is a nonnegative normalized
minimizer, realizing $\nu_\infty^+$ in \eqref{eq:33}. Therefore, it
must coincide with $\tilde u^+$. Finally, defining on
$(0,\infty)$ the function
\begin{equation*}
  f(r)=\limsup_{k\to\infty}  \int_{\Omega_r^+} (A\nabla\tilde
   v_{\ell_k}^+).\nabla\tilde v_{\ell_k}^+-
\int_{\Omega_r^+} (A\nabla\tilde u^+).\nabla\tilde u^+ \,,
\end{equation*}
 we see that on the one hand it is a nonnegative and nondecreasing function,
 while on the other hand $\lim_{r\to\infty} f(r)=0$. Hence $f(r)\equiv 0$, implying the
 strong convergence $\tilde v_{\ell_k}^{+}\to \tilde u^+$ in
 $H^1(\Omega_r^+)$ for all $r>0$.
 The uniqueness of the possible limit implies the the same convergence
 holds for the whole family $\{\tilde v_{\ell}^{+}\}$.\\
{\rm (ii)} In this case we have the symmetry relation  $u_\ell(x_1,
X_2) = u_\ell(-x_1, -X_2)$ by \rprop{symmetry}, and the same argument as in {\rm (i)}
gives the result. 
\end {proof}
\begin{remark}
\label{rem:pincho}
   \rth{th:descr} provides a description of the profile of $u_\ell$ near
   the ends of the cylinder. As pointed to us by Y.~Pinchover, a
   description of the profile of $u_\ell$ in the bulk can be given using
   the characterization of positive solutions in an infinite cylinder,
   given in \cite{pincho}. Indeed, setting $v_\ell(x)=u_\ell(x)/u_\ell(0)$,
   and employing Harnack's inequality and the boundary Harnack
   principle (see \cite[Theorems 1.2 and 1.3]{murata})  we obtain a
   subsequence $\{v_{\ell_k}\}$ that converges uniformly on each
   cylinder $\Omega_r$, $r>0$, to a limit $v$. The function $v$  is a positive solution
   on the infinite cylinder $\Omega_\infty=(-\infty,\infty)\times \omega$ of
   $-\Div(A(X_2)\nabla v)=\lambda_\infty v$ satisfying $v=0$ on $\partial\Omega_\infty=(-\infty,\infty)\times
   \partial\omega$, where $\lambda_\infty=\lim_{\ell\to\infty} \lambda_\ell^1=\min(\nu_\infty^+,\nu_\infty^-)$ 
   (by \rth{th:lim-lam}). From \cite[Theorem~5.1]{pincho}
   (that handles a much more general situation) it follows  
   that such $v$ is a linear combination of one or two {\em exponential}
   solutions of the form $v_\alpha(x)=\Phi_\alpha(X_2)e^{\alpha x_1}$. In particular,
   when property $(S)$ holds it follows  that $v$ takes the form 
$$
v(x)=g(X_2)e^{\alpha x_1}+g(-X_2)e^{-\alpha x_1}\,
$$
for some $\alpha>0$, if \eqref{con} holds, and $v(x)=cW_1(X_2)$ if \eqref{con} doesn't hold.
\end{remark}
\section{Some additional results }
\label{sec:further}
So far we only studied the asymptotic behavior of the first eigenvalue
$\lambda_\ell^1$ and the corresponding eigenfunction $u_\ell$. The analogous
behavior of the other eigenvalues  $\lambda_\ell^2,\lambda_\ell^3,$ etc., is also of
interest. For the case of Dirichlet boundary condition this was done
in \cite{dir}. For our case of mixed boundary conditions we have the
following partial result for $ \lambda_\ell^2$.
\begin{theorem}\label{second2}
If property $(S)$ holds then 
\begin{equation*}
\lim_{\ell \to \infty} \lambda_\ell^2 = \lim_{\ell \to \infty} \lambda_\ell^1\,.
\end{equation*}
\end{theorem}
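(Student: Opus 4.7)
The plan is to establish
\[
\limsup_{\ell\to\infty}\lambda_\ell^2 \;\leq\; \lim_{\ell\to\infty}\lambda_\ell^1,
\]
since the reverse inequality $\liminf_{\ell\to\infty}\lambda_\ell^2\geq \lim_{\ell\to\infty}\lambda_\ell^1$ is immediate from $\lambda_\ell^2\geq\lambda_\ell^1$. The strategy is to exploit property $(S)$: by \rprop{symmetry} the first eigenfunction satisfies $u_\ell(-x_1,-X_2)=u_\ell(x_1,X_2)$, so any $v\in V(\Omega_\ell)$ with $v(-x_1,-X_2)=-v(x_1,X_2)$ is automatically $L^2$-orthogonal to $u_\ell$; by the standard Rayleigh--Ritz characterization of $\lambda_\ell^2$ any such $v$ yields an upper bound
\[
\lambda_\ell^2\;\leq\;\frac{\int_{\Omega_\ell}A\nabla v.\nabla v}{\int_{\Omega_\ell}v^2}.
\]

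The natural candidate is $\psi_\ell(x_1,X_2):=\rho_\eta(x_1)\,u_\ell(x_1,X_2)$, where $\eta>0$ is fixed and $\rho_\eta$ is the odd piecewise-linear function equal to $\pm 1$ on $\pm x_1\geq\eta$ and to $x_1/\eta$ on $|x_1|\leq \eta$; this is manifestly antisymmetric under $(x_1,X_2)\mapsto(-x_1,-X_2)$ and lies in $V(\Omega_\ell)$. To estimate its Rayleigh quotient I would use $\rho_\eta^2 u_\ell\in V(\Omega_\ell)$ as a test function in the weak eigenvalue equation \eqref{eigen1} for $u_\ell$ and expand $\nabla\psi_\ell=\rho_\eta\nabla u_\ell+u_\ell\nabla\rho_\eta$, which leads to the standard IMS-type localization identity
\[
\int_{\Omega_\ell} A\nabla\psi_\ell.\nabla\psi_\ell \;=\; \lambda_\ell^1 \int_{\Omega_\ell}\rho_\eta^2 u_\ell^2 \;+\; \int_{\Omega_\ell}u_\ell^2\,A\nabla\rho_\eta.\nabla\rho_\eta.
\]
Since $\nabla\rho_\eta$ is supported in $\Omega_\eta$ with $|\nabla\rho_\eta|\leq 1/\eta$, the bound \eqref{mod} gives
\[
\lambda_\ell^2-\lambda_\ell^1\;\leq\;\frac{C_A\,\eta^{-2}\int_{\Omega_\eta}u_\ell^2}{1-\int_{\Omega_\eta}u_\ell^2}.
\]

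Everything therefore reduces to showing that for each fixed $\eta>0$ one has $\int_{\Omega_\eta}u_\ell^2\to 0$ as $\ell\to\infty$, and this is the only real obstacle because it forces a split into two cases. If \eqref{con} holds, the decay estimate of \rth{th:decay} directly gives $\int_{\Omega_\eta}u_\ell^2\leq \alpha^{[\ell-\eta]}\to 0$ at an exponential rate. If \eqref{con} fails, \rth{th1} yields $\lambda_\ell^1=\mu^1$, and moreover $W_1(X_2)$, viewed as independent of $x_1$, is then a positive eigenfunction for \eqref{eq:5}, since the Neumann condition on $\Gamma_\ell$ reads $A_{12}.\nabla W_1=0$, which is exactly the failure of \eqref{con}; by simplicity of $\lambda_\ell^1$ this forces $u_\ell(x_1,X_2)=W_1(X_2)/\sqrt{2\ell}$, and hence $\int_{\Omega_\eta}u_\ell^2=\eta/\ell\to 0$. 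In either regime the error term is $o(1)$ as $\ell\to\infty$, so fixing any $\eta>0$ and sending $\ell\to\infty$ yields $\limsup_{\ell\to\infty}\lambda_\ell^2\leq \lim_{\ell\to\infty}\lambda_\ell^1$, completing the argument.
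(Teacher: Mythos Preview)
Your proof is correct, but it takes a genuinely different route from the paper's.

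The paper builds its test function for $\lambda_\ell^2$ out of the minimizers $\tilde u_\ell^\pm$ of the auxiliary Dirichlet--Neumann problems \eqref{eq:35}: it translates $\tilde u_\ell^+$ to sit near $\Gamma_\ell^-$ and $\tilde u_\ell^-$ to sit near $\Gamma_\ell^+$, obtaining two functions with disjoint supports, and then chooses a linear combination orthogonal to $u_\ell$. Because property $(S)$ forces $\tilde\lambda_\ell^{1,+}=\tilde\lambda_\ell^{1,-}$, the Rayleigh quotient of that combination is exactly $\tilde\lambda_\ell^{1,+}$, and one concludes via \rlemma{lem:lim-semi} and \rth{th:lim-lam}. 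This argument is uniform in the two cases \eqref{con}/not-\eqref{con} and does not explicitly invoke \rth{th:decay}.

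You instead use the single antisymmetric function $\psi_\ell=\rho_\eta\,u_\ell$, with orthogonality coming for free from \rprop{symmetry}, and the IMS identity (the same device as in \eqref{ll}) to reduce the error to $\int_{\Omega_\eta}u_\ell^2$. This is arguably more elementary---no auxiliary minimization problems are needed---but the price is the case split: for \eqref{con} you must invoke \rth{th:decay}, and when \eqref{con} fails you need the explicit identification $u_\ell=W_1/\sqrt{2\ell}$ (which is indeed correct: $W_1(X_2)$ solves \eqref{eq:5} with $\sigma=\mu^1$ because the Neumann condition on $\Gamma_\ell$ becomes $A_{12}.\nabla W_1=0$, and simplicity of $\lambda_\ell^1$ does the rest). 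Both approaches are valid; yours trades the machinery of Section~\ref{sec:lim} for a more direct computation tied to the actual eigenfunction $u_\ell$.
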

\begin{proof}
Define $h_\ell^-$ and $h_\ell^+$ on $\Omega_{\ell}$ by
\begin{equation*}
 h_\ell^-(x) = \begin{cases}
\tilde{u}_\ell^{+}(x_1+\ell,X_2) &\text{ on }  \Omega_{\ell}^-, \\
0 &\text{ on }  \Omega_{\ell}^+
\end{cases}
\end{equation*}
and 
\begin{equation*}
h_\ell^+= \begin{cases}
 \tilde{u}_\ell^{-}(x_1-\ell,X_2)  &\text{ on }  \Omega_{\ell}^+, \\
0 &\text{ on } \Omega_{\ell}^-, 
\end{cases}
\end{equation*}
where $\tilde{u}_\ell^{-}, \tilde{u}_\ell^{+}  $ are defined in
Remark~\ref{rem:uellt}.  Set  $\mathcal{H}_\ell = \alpha_\ell h_\ell^- + \beta_\ell h_\ell^+$,\ where
$\alpha_\ell, \beta_\ell$
are chosen such that 
\begin{equation*}
\int_{\Omega_{\ell}}u_{\ell}\mathcal{H}_\ell = 0~\text{ and }~\alpha_\ell^2+\beta_\ell^2>0\,.
\end{equation*}
Such a choice  is possible since we have to satisfy one linear
equation in two unknowns.
From the Rayleigh quotient characterization of $\lambda_\ell^2$ we get, since the functions $h_\ell^{+}$ and $h_\ell^{-}$ have disjoint supports,
\begin{multline}\label{sss}
\lambda_{\ell}^2 =\min
\left\{ \frac{\int_{\Omega_\ell}(A\nabla u).\nabla u}{\int_{\Omega_\ell} u^2}\,
  \big|\,0\neq u \in V(\Omega_\ell),\,\int_{\Omega_\ell}uu_\ell=0   \right\}
 \leq  \frac{\int_{\Omega_{\ell}}A\grad \mathcal{H}_\ell.\grad \mathcal{H}_\ell}{\int_{\Omega_{\ell}}\mathcal{H}_\ell^2}\\
= \frac{\alpha_\ell^2\int_{\Omega_{\ell}^{-}}(A\grad h_\ell^{-}).\grad h_\ell^{-} + \beta_\ell^2 \int_{\Omega_{\ell}^{+}}(A\grad h_\ell^{+}).\grad h_\ell^{+}}{\alpha_\ell^2\int_{\Omega_{\ell}^{-}}(h_\ell^{-})^2
 + \beta_\ell^2\int_{\Omega_{\ell}^{+}}(h_\ell^{+})^2}=\frac{\alpha_\ell^2\tilde{\lambda}_\ell^{1,+} + \beta_\ell^2\tilde{\lambda}_\ell^{1,-} }{\alpha_\ell^2 + \beta_\ell^2} .
\end{multline}
But the symmetry property  $(S)$ implies, by the same proof as that of
\rprop{symmetry}, that $\tilde u_\ell^+(x_1,X_2)=\tilde
u_\ell^-(-x_1,-X_2)$ and
$\tilde{\lambda}_\ell^{1,+}=\tilde{\lambda}_\ell^{1,-}$. Therefore, \eqref{sss}
implies that the RHS of \eqref{sss} equals $\tilde{\lambda}_\ell^{1,+}$ and we
obtain that 
\begin{equation*}
\lambda_{\ell}^1 < \lambda_{\ell}^2 \leq \tilde{\lambda}_\ell^{1,+}=\tilde{\lambda}_\ell^{1,-}\,.
\end{equation*}
The theorem then follows from Lemma \ref{lem:lim-semi} and \rth{th:lim-lam}.
\end{proof}
\smallskip

In the previous sections we considered the case of a cylinder which
goes to infinity in one direction.  We now consider the more general
case of a domain that tends to infinity  in several directions. 
In the rest of the paper we set
$$\Omega_\ell = (-\ell, \ell)^{p}\times \omega,$$
where $1\leq p < n$ and $\omega$ is a bounded subset of $\R^{n-p}$.  
The points in $\Omega_\ell$  are denoted by 
$$
X = (X_1,X_2) \text{ with }X_1 = (x_1,\ldots , x_p)\text{ and }X_2 = (x_{p+1},\ldots,x_n)\,.
$$
Let $A(X_2)$ be a $n \times n$ symmetric, positive definite matrix,
uniformly elliptic and uniformly bounded on $\omega$, as in the previous
sections. Now we consider the following decomposition to sub-matrices:
\begin{equation*}\label{matrix1}
\begin{aligned}
 A(X_2)=
\begin{pmatrix}
    A_{11}(X_2)  & A_{12}(X_2) \\
A_{12}^t(X_2) & A_{22}(X_2)
  \end{pmatrix}\end{aligned}
 \end{equation*}
where  $A_{11}, A_{12}$ and $A_{22}$ are $p\times p, p\times  (n-p) $ and
$(n-p)\times  (n-p)$ matrices, respectively.  We still denote by $\mu^1$ and
$W_1$ the first eigenvalue and the corresponding eigenfunction for the
problem \eqref{eq:3}. Let $C_i$ denote the $i$-th row of the 
matrix $A_{12}$, 
and denote by  $B_i$  the $(n-p+1)\times(n-p+1)$ matrix 
\begin{equation*}
\begin{aligned}
 B_i(X_2)=
\begin{pmatrix}
    a_{ii}(X_2)  & C_i(X_2) \\
C_i^t(X_2) & A_{22}(X_2)
 \end{pmatrix}\end{aligned}
\,,
\end{equation*}
for $1 \leq i \leq p$.
Since the matrix $B_i$ can be viewed as a representation of the restriction of the operator
 associated with $A$ to the subspace of $\R^n$ consisting of the
 vectors $v=(v_1,\ldots,v_n)$ satisfying $v_j=0$ for all $j$ such that
 $i\neq j\leq p$, we conclude
 that the matrices $B_i(X_2)$ are also uniformly elliptic for $X_2\in\omega$.

The following eigenvalue problem is the generalization of \eqref{eq:5}
to our setting:
\begin{equation}
\label{eq:74}
\left\{
\begin{aligned}
  -&\Div(A(X_2)\nabla u)=\sigma u \quad\text{ in }\Omega_\ell,\\
     &u=0 \quad\text{ on } (-\ell,\ell)^{p} \times \partial\omega,\\
     & (A(X_2)\nabla u).\nu=0 \quad\text{ on }\partial(-\ell,\ell)^p\times \omega.
\end{aligned}
\right.
\end{equation}
As before we denote by 
$\lambda_\ell^1$  the first eigenvalue and by $u_\ell$ the corresponding
normalized positive eigenfunction. We have the following
generalization of \rth{th1}.

\begin{theorem}
We have 
\begin{equation*}
\limsup_{\ell \to \infty }  \lambda_\ell^1 < \mu^1,
\end{equation*}
 provided the following condition holds,
\begin{equation}\label{cong}
A_{12}.\nabla W_1\not\equiv {\boldsymbol 0}\text{ a.e. on } \omega\,,
\end{equation}
 where  ${\boldsymbol 0}$ denotes the zero element in $\R^p$.
 In case \eqref{cong} does not hold we have $\lambda_\ell^1 = \mu^1$ for all $\ell>0$.
\end{theorem}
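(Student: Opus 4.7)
The strategy is to reduce to the one-dimensional case (meaning $p=1$) already treated in \rth{th1} by using test functions that depend only on $x_1$ and $X_2$, ignoring the remaining $p-1$ coordinates of $X_1$. The matrices $B_i$ introduced just before the statement are designed precisely for this reduction.

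\textbf{Part (i):} Assume \eqref{cong} holds. Since $A_{12}.\nabla W_1=(C_1.\nabla W_1,\ldots,C_p.\nabla W_1)$, there exists some index $i$ with $C_i.\nabla W_1\not\equiv 0$ a.e.\ on $\omega$; by relabeling, we may assume $i=1$. Consider now the auxiliary problem on $(-\ell,\ell)\times\omega$ with matrix $B_1$, which fits the framework of Sections~\ref{preliminaries}--\ref{ge}: uniform ellipticity of $B_1$ is noted in the excerpt, and the eigenvalue problem for $-\mathrm{div}(A_{22}\nabla u)$ on $\omega$ is exactly \eqref{eq:3}, so the corresponding first eigenvalue and eigenfunction are still $\mu^1$ and $W_1$. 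The condition \eqref{con} for matrix $B_1$ reads $C_1.\nabla W_1\not\equiv 0$, which holds. Applying \rth{th1} to this reduced problem yields, for $\ell$ large, a test function $\psi_\ell\in V((-\ell,\ell)\times\omega)$ with
\begin{equation*}
\frac{\int_{(-\ell,\ell)\times\omega} (B_1\nabla\psi_\ell).\nabla\psi_\ell}{\int_{(-\ell,\ell)\times\omega}\psi_\ell^2}\le \mu^1-\delta_0
\end{equation*}
for some $\delta_0>0$ independent of $\ell$.

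Next, I extend $\psi_\ell$ to a function $\Psi_\ell$ on $\Omega_\ell=(-\ell,\ell)^p\times\omega$ by setting $\Psi_\ell(x_1,x_2,\ldots,x_p,X_2):=\psi_\ell(x_1,X_2)$. Since $\psi_\ell$ vanishes on $(-\ell,\ell)\times\partial\omega$, $\Psi_\ell$ vanishes on $(-\ell,\ell)^p\times\partial\omega$, so $\Psi_\ell$ is admissible in the Rayleigh quotient characterization of $\lambda_\ell^1$. Because $\nabla\Psi_\ell=(\partial_{x_1}\psi_\ell,0,\ldots,0,\nabla_{X_2}\psi_\ell)$, a direct computation gives
\begin{equation*}
(A\nabla\Psi_\ell).\nabla\Psi_\ell=a_{11}(\partial_{x_1}\psi_\ell)^2+2(C_1.\nabla_{X_2}\psi_\ell)\partial_{x_1}\psi_\ell+(A_{22}\nabla_{X_2}\psi_\ell).\nabla_{X_2}\psi_\ell=(B_1\nabla\psi_\ell).\nabla\psi_\ell.
\end{equation*}
Integrating over $\Omega_\ell$ produces an overall factor $(2\ell)^{p-1}$ which cancels in the numerator and denominator, so
\begin{equation*}
\lambda_\ell^1\le \frac{\int_{\Omega_\ell}(A\nabla\Psi_\ell).\nabla\Psi_\ell}{\int_{\Omega_\ell}\Psi_\ell^2}=\frac{\int_{(-\ell,\ell)\times\omega}(B_1\nabla\psi_\ell).\nabla\psi_\ell}{\int_{(-\ell,\ell)\times\omega}\psi_\ell^2}\le \mu^1-\delta_0,
\end{equation*}
for all $\ell$ large, giving $\limsup_{\ell\to\infty}\lambda_\ell^1\le \mu^1-\delta_0<\mu^1$.

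\textbf{Part (ii):} If \eqref{cong} fails, I claim that $v(X):=W_1(X_2)$ is an eigenfunction of \eqref{eq:74} with eigenvalue $\mu^1$. Indeed, $A\nabla v=(A_{12}^t\nabla W_1,A_{22}\nabla W_1)$, and since $A_{12}^t\nabla W_1$ depends only on $X_2$, we get $-\mathrm{div}(A\nabla v)=-\nabla_{X_2}.(A_{22}\nabla W_1)=\mu^1 W_1=\mu^1 v$. The Dirichlet condition on $(-\ell,\ell)^p\times\partial\omega$ is clear, and on each face of $\partial(-\ell,\ell)^p\times\omega$ with outer normal $\pm e_i$ ($1\le i\le p$) the Neumann condition reads $(A\nabla v).\nu=\pm C_i.\nabla W_1$, which vanishes by the hypothesis $A_{12}.\nabla W_1\equiv \mathbf{0}$. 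Thus $v$ is a positive eigenfunction; by the standard simplicity/positivity argument (as used in the excerpt for $\lambda_\ell^1$) it must correspond to the first eigenvalue, so $\lambda_\ell^1=\mu^1$ for every $\ell>0$.

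The only nontrivial step is the reduction in Part (i); the main subtlety is to check that a test function of the form $\psi_\ell(x_1,X_2)$, viewed on $\Omega_\ell$, gives the same Rayleigh quotient as $\psi_\ell$ does on $(-\ell,\ell)\times\omega$ with the $(n-p+1)\times(n-p+1)$ matrix $B_1$. This is a straightforward expansion, and once established the entire 1D machinery of \rth{th1} transfers directly. No genuine obstacle arises.
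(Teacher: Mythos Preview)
Your Part (i) is essentially identical to the paper's argument: pick an index $i$ with $C_i.\nabla W_1\not\equiv 0$, apply \rth{th1} to the reduced problem on $(-\ell,\ell)\times\omega$ with matrix $B_i$, and lift the resulting test function to $\Omega_\ell$ by making it independent of the remaining $p-1$ coordinates, noting that the factor $(2\ell)^{p-1}$ cancels in the Rayleigh quotient.

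Your Part (ii), however, takes a genuinely different route. The paper proceeds by introducing the generalized $\Lambda^1$ via the pointwise minimum \eqref{eq:76}--\eqref{eq:78}, obtains the lower bound $\lambda_\ell^1\ge\Lambda^1$, and then argues that when $A_{12}\nabla W_1\equiv\mathbf{0}$, $W_1$ is a positive eigenfunction of the Euler--Lagrange equation \eqref{eq:79}, forcing $\Lambda^1=\mu^1$. You instead verify directly that $v(X)=W_1(X_2)$ is a positive eigenfunction of the mixed problem \eqref{eq:74} itself with eigenvalue $\mu^1$, and invoke simplicity of the first eigenvalue. Your argument is more elementary and self-contained, bypassing the $\Lambda^1$ machinery entirely; the paper's approach has the advantage of reusing a framework already built for the $\ell\to 0$ analysis. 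One small point: since the coefficients are only $L^\infty$, your pointwise verification of the PDE and Neumann condition should really be phrased weakly, but this is straightforward---once $A_{12}\nabla W_1\equiv\mathbf{0}$, the weak formulation $\int_{\Omega_\ell}A\nabla v.\nabla\varphi=\mu^1\int_{\Omega_\ell}v\varphi$ reduces (after Fubini) to the weak equation for $W_1$ on $\omega$. Also, a minor slip: you write $A_{12}^t\nabla W_1$ for the first block of $A\nabla v$, but since $A_{12}$ is $p\times(n-p)$ it should be $A_{12}\nabla W_1$; this does not affect the argument.
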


\begin{proof}
 Assume first that \eqref{cong} doesn't hold. Then there exists
 $i\in\{1,\ldots,p\}$ for which $(A_{12}\nabla W_1)_i$ is not identically zero
 (a.e.) on $\omega$. It follows that the hypotheses of \rth{th1} (for the case where
\eqref{con} holds) are satisfied for the eigenvalue problem associated
with the operator $-\Div(B_i(X_2)\nabla v)$ on the domain $\tilde
\Omega_\ell=(-\ell,\ell)\times\omega$ in $\R^{n-p+1}$.  Hence, there exist functions
$\phi_\ell(x_1,X_2)\in V(\tilde\Omega_\ell),\,\ell>0,$ such that
\begin{equation}
  \label{eq:75}
\limsup_{\ell \to \infty} \frac{\int_{\tilde \Omega_\ell} (B_i(X_2)
  \nabla\phi_\ell). \nabla\phi_{\ell}}{\int_{\tilde \Omega_\ell} \phi_\ell^2} < \mu^1\,.
\end{equation}
Define on $\Omega_\ell$, $v_\ell(X_1,X_2) := \phi_\ell(x_i,X_2)$. Noting that
$$\int_{\Omega_\ell} (A\grad v_\ell). \grad v_\ell = (2\ell)^{p-1} \int_{\tilde
  \Omega_\ell} (B_i \nabla\phi_\ell).\nabla\phi_{\ell} ~\text { and }~\int_{\Omega_{\ell}}
v_\ell^2=(2\ell)^{p-1} \int_{\tilde \Omega_\ell} \phi_\ell^2\,,$$
we get from \eqref{eq:75} that
$$\limsup_{\ell \to \infty} \lambda_\ell^1 \leq \limsup_{\ell \to \infty} \frac{\int_{\Omega_\ell} A\grad v_\ell\cdot \grad
  v_\ell}{\int_{\Omega_{\ell}} v_\ell^2} < \mu^1\,.$$
\noindent Assume now that \eqref{cong} does hold. Next we apply a
simple generalization of an argument from \rth{th:lim-zero}.
Let 
  $B=\begin{pmatrix}
B_{11} & B_{12} \\
B^t_{12} & B_{22}
\end{pmatrix}$ be a positive definite $n\times n$ matrix, where $B_{11}$ and
$B_{22}$ are $p\times p$ and $(n-p)\times(n-p)$ matrices, respectively. Represent any
vector $\vec{z}$ in $\R^n$ as $\vec{z}=(Z_1,Z_2)$ with $Z_1\in \R^{p}$
and $Z_2\in \R^{n-p}$. Then,
by a similar computation to the one leading to
\eqref{eq:obs}--\eqref{eq:13} we get  that for any fixed $Z_2\in\R^{n-p}$ we have
  \begin{equation}
\label{eq:76}
    \min_{Z_1\in\R^p} (B\vec{z}).\vec{z}=(B_{22}Z_2).Z_2-\big(B_{11}^{-1}B_{12}Z_2\big).B_{12}Z_2\,,
  \end{equation}
 and the minimum in \eqref{eq:76} is attained for
\begin{equation*}
Z_1=-B_{11}^{-1}(B_{12}Z_2)\,.
\end{equation*}
 Applying \eqref{eq:76} with $B=A(X_2)$ we obtain, for any $\ell>0$,
 \begin{equation}
\label{eq:77}
\begin{aligned}
   \int_{\Omega_\ell} (A(X_2)\nabla u_\ell).\nabla u_\ell&\geq  \int_{\Omega_\ell}
   (A_{22}\nabla_{X_2}u_\ell).\nabla_{X_2}u_\ell-\big(A_{11}^{-1}A_{12}\nabla_{X_2}u_\ell\big).A_{12}\nabla_{X_2}u_\ell\\
&\geq  \Lambda^1\int_{\Omega_\ell}u_\ell^2\,,
\end{aligned} 
 \end{equation}
 where $\Lambda^1$ is defined, generalizing \eqref{eq:Lambda}, by 
\begin{equation}
   \label{eq:78}
\Lambda^1=\inf\left\{ \int_{\omega}A_{22}\nabla u.\nabla
  u-\big(A_{11}^{-1}A_{12}\nabla u\big).A_{12}\nabla u
  :\,u \in H_0^1(\omega),\, \int_{\omega}u^2=1 \right\}.
 \end{equation}
The infimum in \eqref{eq:78} is attained by a unique positive function,
denoted again by $w_1$, that satisfies
\begin{equation}
\label{eq:79}
\left\{ 
\begin{aligned}
 -&\Div (A_{22}\nabla w_1)+\Div(A_{12}^tA_{11}^{-1}A_{12}\nabla w_1)=\Lambda^1w_1~\text{ in }\omega\,,\\
&w_1=0~\text{ on }\partial\omega\,.
\end{aligned}
\right.
\end{equation}
But if \eqref{cong} holds, then $W_1$ is also a positive eigenfunction
in \eqref{eq:79}, with eigenvalue $\mu^1$. As in the proof of \rth{th1}
we conclude that $\Lambda^1=\mu^1$ and the result follows from \eqref{eq:77}
(since clearly $\lambda_\ell^1\leq \mu^1$).
\end{proof}

\textbf{Acknowledgements:} \ 
The first author has been supported by the Swiss National Science
Foundation under the contract $\#$ 200021-129807/1.
 The third author thanks Yehuda Pinchover for Remark~\ref{rem:pincho}
 and for providing him the relevant references.

\end{document}